\newcommand{\B}{\mathrm{B}}                
\newcommand{\E}{\mathrm{E}}                
\newcommand{\Lin}{\mathrm{Lin}}            
\newcommand{\Hh}{\mathrm{H}}               
\newcommand{\R}{\mathbb{R}}                
\newcommand{\T}{\mathrm{T}}                
\newcommand{\cotan}{\mathrm{T}^*}          
\newcommand{\Z}{\mathbb{Z}}                
\newcommand{\aff}{\mathrm{Aff}}            
\newcommand{\affz}{\mathrm{Aff}_{\Z}}      
\newcommand{\gl}{\mathrm{GL}}              
\newcommand{\glnr}{\mathrm{GL}(n;\R)}      
\newcommand{\glnz}{\mathrm{GL}(n;\Z)}      
\newcommand{\pib}{\pi_1(B)}                
\newcommand{\am}{(B,\mathcal{A})}          
\newcommand{\aam}{(N,\mathcal{A})}         
\newcommand{\baff}{\mathrm{B}\aff(\rzn)}   
\newcommand{\bgl}{\mathrm{B}\glnz}         
\newcommand{\brzn}{\mathrm{B}\rzn}         
\newcommand{\rzn}{\R^n/\Z^n}               
\newcommand{\sm}{(M,\omega)}               
\newcommand{\tn}{\mathbb{T}^n}             
\newcommand{\alf}{\rzn \hookrightarrow M \to B}                 
\newcommand{\flf}{F \hookrightarrow \sm \to B}                  
\newcommand{\lf}{\rzn \hookrightarrow (M,\omega) \to B}         
\newcommand{\rlf}{\rzn \hookrightarrow (\cotan B/P_{\am},\omega_0) \to B}    
\newcommand{\plb}{\Z^n \hookrightarrow P \to B}                 
\newcommand{\univcover}{\xymatrix@1{\glnz\,\ar@{^{(}->}[r]       
    &\mathrm{B}(\rzn) \ar[r]^-{\tau} & \mathrm{B}\affz(\rzn)}}
\newcommand{\univlf}{\xymatrix@1{\rzn\,\ar@{^{(}->}[r]
    &\mathrm{B}\glnz \ar[r]^-{\sigma} & \mathrm{B}\affz(\rzn)}}
\newcommand{\ulf}{\rzn \hookrightarrow \baff \to \bgl}
\newcommand{\univplb}{\mathrm{H}_1(\rzn;\Z) \hookrightarrow
    P_n \to \mathrm{B}\affz(\rzn)}
\newcommand{\de}{\mathrm{d}}               
\newcommand{\detwo}{\de^{(2)}}             
\theoremstyle{definition}
\newtheorem{defn}{Definition}[section]
\newtheorem{rk}[defn]{Remark}
\newtheorem*{mr}{Main Result}
\newtheorem*{qn}{Question}
\newtheorem{ex}[defn]{Example}
\theoremstyle{remark}
\newtheorem*{notation}{Notation}
\theoremstyle{plain}
\newtheorem{thm}{Theorem}[section]
\newtheorem{lemma}[thm]{Lemma}
\newtheorem{cor}[thm]{Corollary}
\newtheorem{prop}[thm]{Proposition}
\title{Universal Lagrangian Bundles}
\author{Daniele Sepe}
\thanks{The author would like to thank Jos\'e Figueroa-O'Farrill and Jarek
  Kedra for insightful comments on earlier versions of this
  article, and the anonymous referee for pointing out how to improve a previous version of the paper.}
\address{Department of Mathematics, University of Leicester,
  University Road, Leicester LE1 7RH, United Kingdom}
\email{dsepe@math.ist.utl.pt}
\subjclass{37J35, 53A15, 57R17, 70H06}
\begin{document}
\maketitle
\begin{abstract}
The obstruction to construct a Lagrangian bundle over a fixed integral affine manifold was constructed by Dazord and Delzant in \cite{daz_delz} and shown to be given by `twisted' cup products in \cite{sepe_lag}. This paper uses the topology of universal Lagrangian bundles, which classify
Lagrangian bundles topologically (cf. \cite{sepe_topc}), to reinterpret this obstruction as the vanishing of a differential on the second page of a Leray-Serre spectral sequence. Using this interpretation, it
is shown that the obstruction of Dazord and Delzant depends on an important cohomological
invariant of the integral affine structure on the base space, called
the radiance obstruction, which was introduced by Goldman and Hirsch
in \cite{goldman}. Some examples, related to non-degenerate
singularities of completely integrable Hamiltonian systems, are discussed. 
\end{abstract}
\setcounter{tocdepth}{1}
\tableofcontents

\section{Introduction} \label{sec:introduction}
A fibre bundle $F \hookrightarrow \sm \to B$ is \emph{Lagrangian} if $\sm$ is
a $2n$-dimensional symplectic manifold and the fibres are Lagrangian
submanifolds, \textit{i.e.} $w|_{F} =0$ and $\dim F = n$. Throughout
this paper, the fibres are assumed to be compact and
connected. Lagrangian bundles arise naturally in many fields of
mathematics, ranging from classical and quantum Hamiltonian mechanics
to mirror symmetry, as the regular parts of the so-called Lagrangian
fibrations. These admit singular fibres and are one of the geometric
constructs used to study the topology, geometry and dynamics of
completely integrable Hamiltonian systems 
(cf. Appendix D in \cite{cush_dav}). Lagrangian bundles contain information about
singularities of completely integrable Hamiltonian 
systems and are, therefore, central to the study of these dynamical
systems. Under the above 
assumptions on the fibres, the Liouville-Mineur-Arnol`d theorem
(cf. Section 49 in \cite{arnold}) implies that the fibres of a Lagrangian bundle are diffeomorphic to
$n$-dimensional tori $\tn$, and that a neighbourhood of a fibre is
symplectomorphic to a neighbourhood of the zero section of the
cotangent bundle $(\cotan B, \Omega_{\tn}) \to \tn$, where
$\Omega_{\tn}$ is the canonical symplectic form on $\cotan \tn$. This
theorem gives semi-global (\textit{i.e.} in the neighbourhood of  a
fibre) topological and symplectic 
classifications of Lagrangian bundles. The global topological classification
has been achieved by Duistermaat in
\cite{dui}; there are only two topological invariants, namely the
topological \emph{monodromy} and the \emph{Chern class}, which is the
obstruction to the existence of a section. Dazord and Delzant extended
these notions to the broader class of isotropic bundles (whose
fibres are not necessarily of maximal dimension) in \cite{daz_delz};
moreover, they introduced a symplectic invariant, the
\emph{Lagrangian} class, which is the obstruction to the existence of
a Lagrangian section. \\ 

Closely related to the classification problem is the question of constructing
Lagrangian bundles over a fixed manifold $B$. The
Liouville-Mineur-Arnol'd theorem, along with an observation due to
Markus and Meyer in \cite{markus_meyer} and Duistermaat \cite{dui},
provides a necessary condition for a $\tn$-bundle to be Lagrangian,
namely that the fibres admit a smoothly varying \emph{affine
  structure}, which is a $\mathrm{C}^{\infty}$-atlas $\mathcal{A}$
whose changes of coordinates are constant affine transformations of
$\R^n$. Such bundles are called \emph{affine} $\rzn$-bundles. The
above condition is not sufficient
(cf. \cite{baier,sepe_topc}), as the total space does not necessarily
admit an appropriate symplectic form. The existence of such a form on
the total space of $\tn \hookrightarrow \sm \to B$ implies that the
base space is an \emph{integral affine manifold}, \textit{i.e.} an
affine manifold whose atlas $\mathcal{A}$ has coordinate changes lying
in
$$ \aff_{\Z} (\R^n) := \glnz \ltimes \R^n. $$
\noindent
This property restricts the topology of manifolds that arise as the
base space of Lagrangian bundles: for instance, the only closed
orientable (integral) affine surface is $\mathbb{T}^2$
(cf. \cite{benzecri,milnor_euler}). The above observation, originally
due to Duistermaat in \cite{dui}, entwines the construction problem of
Lagrangian bundles with the study of affine differential geometry,
which was first introduced in the '50s and has since attracted much
attention (\textit{e.g.}
\cite{aus,hirsch,goldman,goldman_orbits,milnor_euler,smillie}). \\

Let $\tn \hookrightarrow \sm \to B$ be a Lagrangian bundle, denote by
$\chi_*: \pi_1(B;b) \to \glnz$ its monodromy (cf. Definition
\ref{defn:monodromy}) and let $\mathcal{A}$ be the induced integral
affine atlas on $B$. Associated to $\mathcal{A}$ is a flat,
torsion-free connection $\nabla$ on the tangent bundle $\T B$
(cf. \cite{aus}); denote by
$$ \mathfrak{l} : \pi_1(B;b) \to \glnz $$
\noindent
its linear holonomy representation. Then 
\begin{equation}
  \label{eq:1}
  \chi_* = \mathfrak{l}^{-T},
\end{equation}
\noindent
where $-T$ denotes inverse transposed. Therefore, in order to study
the construction problem for Lagrangian bundles over $B$, it is necessary to
fix an integral affine structure $\mathcal{A}$ on $B$ and consider affine
$\rzn$-bundles over $\am$ whose monodromy satisfies equation
(\ref{eq:1}). Such bundles are called \emph{almost Lagrangian}
(cf. Definition \ref{defn:almost_lag_bundles}). The
isomorphism class of these bundles depends on a cohomology class
$$ c \in \Hh^2(B;\Z^n_{\chi_*}),$$
\noindent
\textit{i.e.} the obstruction to the existence of a section. A result
due to Dazord and Delzant in 
\cite{daz_delz} shows that there is a homomorphism
$$ \mathcal{D}_{\am} : \Hh^2(B;\Z^n_{\chi_*}) \to \Hh^3(B;\R) $$
\noindent
whose kernel gives the subgroup of classes whose corresponding almost
Lagrangian bundles are Lagrangian. This homomorphism can be computed explicitly and is obtained by taking a `twisted' cup product on $B$, as shown in \cite{sepe_lag}. However, the work of \cite{sepe_lag} leaves some natural questions regarding $\mathcal{D}_{\am}$ unanswered, namely

\begin{itemize}
\item given that $\mathcal{D}_{\am}$ measures, in some sense, the obstruction to constructing a suitable symplectic form on the total space of an almost Lagrangian bundle, can it be related to some differentials in the Leray-Serre spectral sequence of said bundle?
\item what is the exact relation between $\mathcal{D}_{\am}$ and the integral affine geometry of $\am$?
\item the twisted cup product of Definition 4 in \cite{sepe_lag} does not come from a pairing between $\Hh^2(B;\Z^n_{\chi_*})$ and some other cohomology group on $B$. Can $\mathcal{D}_{\am}$ be understood as an `honest' twisted cup product, \textit{i.e.} for any $c \in \Hh^2(B;\Z^n_{\chi_*})$, is $\mathcal{D}_{\am}(c)$ obtained by pairing $c$ with some suitable fixed cohomology class?  
\end{itemize}

The main novelty of the paper is that it provides an answer to the above questions, thus providing a more natural, geometric description of $\mathcal{D}_{\am}$. In particular, the main result of this paper shows that $\mathcal{D}_{\am}$ is determined by an integral affine invariant
of $\am$, namely the \emph{radiance obstruction} 
$$r_{\am} \in \Hh^1(B;\R^n_{\mathfrak{l}}). $$
\noindent
This cohomology class has been defined by Goldman and Hirsch in
\cite{goldman} (and implicitly by Smillie in \cite{smillie}); it is
the obstruction to homotoping the coordinate changes in
$\mathcal{A}$ to take values in $\glnz$. In fact, the main result of
the paper can be stated as follows.

\begin{mr}
  The obstruction for an almost Lagrangian bundle over an integral
  affine manifold $\am$ with
  linear holonomy $\mathfrak{l}$ to be Lagrangian is given by 
  $$ \mathcal{D}_{\am}(c) = c \cdot r_{\am} \in \Hh^3(B;\R), $$
  \noindent
  where $\cdot$ denotes the pairing in cohomology 
  $$ \Hh^2(B;\Z^n_{\chi_*}) \otimes_{\Z} \Hh^1(B;\R^n_{\mathfrak{l}}) \to \Hh^3(B;\R) $$
  \noindent
  obtained by combining cup product on $B$ with the duality between the coefficient systems.
\end{mr}

Note that the above result implies Theorem 3 in \cite{sepe_lag} and it also explains the geometric reason for why the `twisted' cup product of \cite{sepe_lag} appears there in the first place (cf. Remark \ref{rk:relation_alo}). \\

The Main Result is achieved by combining Theorems \ref{thm:diff_general}, \ref{thm:realisability} and \ref{thm:equality_forms} below. In particular, its proof can be broken down into the following three steps, each of which emphasises a new aspect of the Dazord-Delzant homomorphism.

\subsection*{Step 1}
The Leray-Serre spectral sequence of \emph{universal Lagrangian bundles}
(cf. \cite{sepe_topc}) is studied. These bundles classify, up to isomorphism,
the topological 
type of affine $\rzn$-bundles and, in particular, of those which are
Lagrangian. Each universal Lagrangian bundle has two topological
invariants, namely its topological monodromy and the universal Chern
class $c_U$. Using techniques from the work of Charlap and Vasquez
\cite{charlap}, Theorem \ref{thm:diff_general} proves that some
differential $\detwo$ 
on the $\E_2$-page of the Leray-Serre spectral sequence with
$\Z$-coefficients of the universal Lagrangian bundle is
given by taking cup products with the universal Chern class
$c_U$. This is a non-trivial generalisation of the computation of the
corresponding differential for the universal principal $\tn$-bundle
(cf. Lemma \ref{lemma:main_result_torus}). 

\subsection*{Step 2}
Associated to an integral affine manifold $\am$ is the
\emph{symplectic reference bundle} 
$$\tn \hookrightarrow (\cotan
B/P_{\am}, \omega_0) \to B, $$
\noindent
which admits a globally defined
Lagrangian section and it induces the given integral affine atlas
$\mathcal{A}$ on $B$. The symplectic form $\omega_0$ defines a
cohomology class
$$ w_0 \in \Hh^1(B;\Hh^1(\tn;\R)_{\mathfrak{l}}), $$
\noindent
(cf. Lemma \ref{lemma:coho_class_omega_0}). Using Step 1, the obstruction for an
almost Lagrangian bundle with Chern class $c$ to be Lagrangian is
proved to be given by
$$ \detwo (w_0) = c \cdot w_0, $$
\noindent
up to some isomorphisms (cf. Theorem \ref{thm:realisability}). In light of the results in \cite{daz_delz}, this proves that the Dazord-Delzant homomorphism comes from the differential on the second page of the Leray-Serre spectral sequence of the underlying fibre bundle.

\subsection*{Step 3} The cohomology class $w_0$ is shown to be mapped
to the radiance obstruction $r_{\am}$ under an isomorphism induced by
the symplectic form $\omega_0$ (cf. Theorem
\ref{thm:equality_forms}). Combining this result with Steps 1 and 2 above, obtain the Main Result, which expresses the Dazord-Delzant homomorphism as the pairing $\cdot$ with the radiance obstruction of $\am$.\\

The structure of the paper is as follows. Section
\ref{sec:defin-basic-prop} constructs the universal Lagrangian
bundles of \cite{sepe_topc} and the corresponding topological
invariants, namely the monodromy and universal Chern class. These
results only depend on the existence of a smoothly varying affine
structure on the fibres of Lagrangian bundles. The importance of the
integral affine geometry of the base space of a Lagrangian bundle is
explained in Section \ref{sec:construction-problem}, where the
construction problem is posed and almost Lagrangian bundles are
defined. Throughout this section, the connection with the sheaf
theoretic point of view on the problem (cf. \cite{daz_delz,dui}) is
highlighted. Section \ref{sec:spectral-sequence} brings universal
Lagrangian bundles into the problem of constructing Lagrangian bundles
over an integral affine manifold $\am$; Theorem
\ref{thm:realisability} proves that the homomorphism
$\mathcal{D}_{\am}$ of Dazord and Delzant is given by taking twisted
cup products with $w_0$, the cohomology class of the symplectic form
of the symplectic reference bundle. This is obtained via a study of
the Leray-Serre spectral sequence of universal Lagrangian bundles
(cf. Theorem \ref{thm:diff_general}). Finally, Section
\ref{sec:relat-integr-affine} defines the radiance obstruction
$r_{\am}$ of an (integral) affine manifold $\am$ and completes the
proof of the Main Result stated above. Moreover, it exploits the
connection between the symplectic topology of Lagrangian bundles and
integral affine geometry in Theorem
\ref{thm:no_closed_integral_affine_mflds} and by considering some
explicit examples which are related to singular Lagrangian bundles.
 

\section{Universal Lagrangian
  Bundles and Topological Invariants} \label{sec:defin-basic-prop}
\subsection{Definition of universal Lagrangian
  bundles}\label{sec:struct-group-lagr}
\subsubsection*{Structure group}The Liouville-Mineur-Arnol'd theorem,
along with a crucial observation
due to Markus and Meyer \cite{markus_meyer} and Duistermaat
\cite{dui}, can be used to prove the following theorem, stated below
without proof, regarding the structure of Lagrangian bundles.

\begin{thm}[Markus and Meyer \cite{markus_meyer}, Duistermaat
  \cite{dui}] \label{thm:existence_of_plb} 
  Let $\flf$ be a Lagrangian bundle. Then
  \begin{enumerate}[label=\roman*), ref=\roman*]
  \item \label{item:3} for each $b \in B$, the fibre $F_b =
    \pi^{-1}(b)$ is diffeomorphic to $\tn$;
  \item \label{item:4} the structure group of the fibre bundle reduces
    to
    $$ \glnz \ltimes \rzn. $$
  \end{enumerate}
\end{thm}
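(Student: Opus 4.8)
The plan is to derive both statements from the Liouville-Mineur-Arnol'd theorem recalled in the introduction. Part \ref{item:3} is immediate: each fibre $F_b = \pi^{-1}(b)$ is compact, connected and Lagrangian in $\sm$, so the theorem provides a symplectomorphism from a neighbourhood of $F_b$ onto a neighbourhood of the zero section of $(\cotan \tn, \Omega_{\tn})$, whence $F_b \cong \tn$.

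The substance of the theorem lies in part \ref{item:4}, and my plan is to construct the \emph{period lattice} $P \subset \cotan B$ and then read off the transition functions from action-angle coordinates. First I would note that, since each fibre is Lagrangian, contraction with $\omega$ identifies the vertical tangent space $\T_x F_b$ with $\cotan_b B$ for every $x \in F_b$; concretely, a covector $\alpha \in \cotan_b B$ is sent to the restriction to $F_b$ of the Hamiltonian vector field $X_{\pi^* f}$ of any local function $f$ on $B$ with $\de f = \alpha$ at $b$. As $\alpha$ ranges over $\cotan_b B$ this yields a locally free, transitive action of $\cotan_b B$ on the compact fibre $F_b$, so that $F_b \cong \cotan_b B / P_b$, where $P_b$ is the isotropy lattice of covectors whose time-$1$ flow is the identity. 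The Liouville-Mineur-Arnol'd theorem supplies, over a neighbourhood $U$ of each point, \emph{action} functions $I_1,\dots,I_n$ whose differentials $\de I_1,\dots,\de I_n$ form a basis of sections of $P|_U$, and dually \emph{angle} coordinates giving a trivialisation $\pi^{-1}(U) \cong U \times \tn$ in which the fibre torus is exactly $\cotan_b B / P_b$.

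It then remains to analyse the transition functions over $U \cap V$ between two such Liouville trivialisations. Both charts present bases of sections of the same integral lattice $P$, so the two bases differ by a locally constant matrix in $\glnz$; this pins the structure group of $P$ down to $\glnz$ in place of the a priori $\glnr$, and its (inverse-transpose) action on the quotients $\cotan_b B/P_b$ is the source of equation (\ref{eq:1}). The only further freedom is the placement of the origin of the angle coordinates on each fibre, which varies smoothly over the base and contributes a translation valued in $\tn = \rzn$. Hence each transition function is a fibrewise affine automorphism of $\tn$; since an affine map $x \mapsto Ax+b$ of $\R^n$ descends to an automorphism of $\rzn$ precisely when $A \in \glnz$ (to preserve $\Z^n$) with $b$ read modulo $\Z^n$, the group of such automorphisms is exactly $\glnz \ltimes \rzn$, giving the asserted reduction.

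I expect the main obstacle to be the verification that $P$ is genuinely a locally constant, full integral lattice sub-bundle of $\cotan B$, and correspondingly that the linear part of each transition function is forced into the discrete group $\glnz$ rather than $\glnr$. This is precisely the observation of Markus-Meyer and Duistermaat: it is the integrality and local constancy of the period lattice furnished by the action coordinates --- and not merely the existence of a smooth affine structure on each individual fibre --- that rigidifies the transition functions into $\glnz \ltimes \rzn$. Once this is established, what remains is the routine book-keeping of how the action and angle coordinates transform on overlaps.
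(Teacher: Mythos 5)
The paper states this theorem explicitly without proof, deferring to Markus--Meyer and Duistermaat, so there is no internal argument to compare against. Your sketch is the standard one from those references --- the fibrewise $\cotan_b B$-action by time-one Hamiltonian flows of pulled-back functions, the period lattice $P \subset \cotan B$ trivialised locally by the differentials of the action coordinates, and the resulting $\glnz \ltimes \rzn$-valued transition cocycle of equation (\ref{eq:41}) --- and it is correct; the only steps a full write-up would need to spell out are the well-definedness, fibre-preservation and commutativity of those flows (all consequences of the fibres being compact Lagrangian), which you use implicitly when asserting that the action is transitive with lattice isotropy.
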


One of the consequences of Theorem \ref{thm:existence_of_plb} is that
the fibres of a Lagrangian bundle are naturally \emph{affine
  manifolds}.

\begin{defn}[Affine manifolds] \label{defn:affine_manifold}
  An \emph{affine structure} on an $n$-dimensional manifold $B$ is a
  choice of atlas $\mathcal{A} = \{(U_{\alpha},
  \phi_{\alpha}:U_{\alpha} \to \R^n)\}$ whose changes of coordinates
  
  $$ \phi_{\beta} \circ \phi^{-1}_{\alpha}: \phi_{\alpha}(U_{\alpha}
  \cap U_{\beta}) \subset \R^n \to \phi_{\beta}(U_{\alpha}
  \cap U_{\beta}) \subset \R^n$$
  \noindent
  are constant on connected components, and are \emph{affine
    transformations} of $\R^n$, \textit{i.e.} they lie 
  in the group 
  $$ \aff (\R^n) := \gl(n;\R) \ltimes \R^n, $$
  \noindent
  where the action of $\gl(n;\R)$ on $\R^n$ is the standard one. An
  \emph{affine manifold} is a pair $\am$, where $B$ is a
  manifold and $\mathcal{A}$ is an affine structure on $B$. 
\end{defn}

In particular, the fibres of a Lagrangian bundle can be smoothly
identified with the following affine manifold.

\begin{ex}[Standard affine structure on $\tn$]\label{ex:std_affine_rzn}
  Let $\Lambda \subset (\R^n,+)$ be the standard cocompact
  lattice and let $\mathbf{e}^1,\ldots,\mathbf{e}^n$ denote the
  standard generators of $\Lambda$. Define a $\Lambda$-action on
  $\R^n$ by
  $$ \mathbf{a} \cdot \mathbf{e}^i= \mathbf{a} + \mathbf{e}^i, $$
  \noindent
  where $\mathbf{a} = (a^1,\ldots,a^n)$ denotes the standard (hence affine)
  coordinates on $\R^n$. The quotient $\R^n/\Lambda$
  inherits an affine structure $\mathcal{A}$,
  since the above action is by \emph{affine 
    diffeomorphisms} of $\R^n$, \textit{i.e.} diffeomorphisms which
  are affine in local affine coordinates. For notational ease, this
  affine manifold is henceforth denoted by $\rzn$.
\end{ex}

\begin{rk}
  Theorem \ref{thm:existence_of_plb} implies that the structure group of
  a Lagrangian bundle can be reduced to the group of affine
  diffeomorphisms of $\rzn$, denoted by $\aff(\rzn)$. 
\end{rk}

\begin{notation}
  The fibres of a Lagrangian bundle are henceforth denoted by $\rzn$
  to emphasise the importance of their affine structure. Affine
  coordinates on $\rzn$ are denoted by $\boldsymbol{\theta}$.
\end{notation}

\begin{rk}\label{rk:affine_structure_translations}
  The subgroup of $\aff(\rzn)$ consisting of translations can be
  identified with the affine manifold $\rzn$ via the map that takes a
  translation $\mathcal{T}$ to $\mathcal{T}\boldsymbol{\theta}_0$, where
  $\boldsymbol{\theta}_0 \in \rzn$ is any fixed point. 
\end{rk}

The existence of a smoothly varying affine structure on the fibres of
a Lagrangian bundle $\lf$ implies the existence of an associated $\Z^n$-bundle
$\plb$ whose fibres are smoothly identified with the lattice $\Lambda$
of Example \ref{ex:std_affine_rzn}.

\begin{defn}[Period lattice bundle \cite{dui}]\label{defn:plb}
  The covering space $\plb$ is called the \emph{period lattice bundle}
  associated to $\lf$.
\end{defn}

\subsubsection*{Definition of universal Lagrangian bundles}
The topology of the group $\aff(\rzn)$ can be used to construct
universal Lagrangian bundles. Let $0$ and $1$ denote the trivial group
with additive and multiplicative structures respectively. There is a
short exact sequence of groups
\begin{equation}
  \label{eq:16}
  \xymatrix@1{0 \ar[r] & \rzn \ar[r]^-{\tau} & \aff(\rzn)
    \ar[r]^-{\mathrm{Lin}} & \glnz \ar[r] & 1,}
\end{equation}
\noindent
where the homomorphisms $\tau, p$ are defined by
\begin{equation*}
  \begin{split}
    \tau: \rzn &\to \aff(\rzn) \\
    \boldsymbol{\theta} &\mapsto (I, \boldsymbol{\theta})
  \end{split}
\end{equation*}
\begin{equation*}
  \begin{split}
    \mathrm{Lin}: \aff(\rzn) &\to \glnz \\
    (A,\boldsymbol{\theta}) &\mapsto A,
  \end{split}
\end{equation*}
\noindent
and $I$ denotes the identity in $\glnz$. The sequence of equation
(\ref{eq:16}) admits a splitting
\begin{equation*}
  \begin{split}
    \sigma: \glnz &\to \aff(\rzn) \\
    A &\mapsto (A,\mathbf{0}),
  \end{split}
\end{equation*}
\noindent
where $\mathbf{0}$ denotes the identity in the group $\rzn$. The injections
$\sigma, \tau$ defined above give rise to interesting bundles via the
following well-known theorem, stated below without proof
(cf. Section 5 in \cite{huse}).

\begin{thm}\label{thm:subgroup}
  Let $G, H$ be topological groups and let
  $\iota: H \hookrightarrow G$ be a monomorphism. There exists a
  bundle
  $$ G/H \hookrightarrow \B H \to \B G, $$
  \noindent
  where $\B G, \B H$ are the classifying spaces for $G$ and $H$
  respectively. If, in addition, $\iota(H) \lhd G$,
  then the above bundle is a $G/H$-principal bundle.
\end{thm}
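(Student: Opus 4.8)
The plan is to prove Theorem \ref{thm:subgroup} by the standard Milnor-type construction of classifying spaces, exploiting functoriality to produce the bundle and then identifying the fibre. First I would recall that for any topological group $G$ there is a universal principal $G$-bundle $\E G \to \B G$ with $\E G$ contractible, obtained (following Milnor, cf. Section 5 in \cite{huse}) as the infinite join $G * G * \cdots$ equipped with the diagonal $G$-action; this construction is functorial in $G$ in the sense that a continuous homomorphism $\iota: H \to G$ induces $H$-equivariant maps on the total spaces and hence maps $\B\iota: \B H \to \B G$ on base spaces. The key observation is that, because $\iota$ is a monomorphism, $H$ acts on $\E G$ freely (the action being the restriction of the free $G$-action along $\iota$), and since $\E G$ is contractible, the quotient $\E G/H$ is a model for $\B H$.

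The central step is then to realise $\B H$ as the total space of a bundle over $\B G$ with fibre $G/H$. Concretely I would consider the associated fibre bundle $\E G \times_G (G/H) \to \B G$ with fibre $G/H$, obtained from the universal principal $G$-bundle via the natural left $G$-action on the homogeneous space $G/H$. The crux is the natural homeomorphism
$$ \E G \times_G (G/H) \;\cong\; \E G/H, $$
which holds because the map $\E G \times (G/H) \to \E G/H$ sending $(e, gH) \mapsto [e\cdot g]$ descends to the Borel quotient and is a bijection with continuous inverse. Combining this with the identification $\E G/H \simeq \B H$ from the first step gives precisely the bundle
$$ G/H \hookrightarrow \B H \to \B G $$
asserted in the statement.

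For the second assertion, I would use the fact that when $\iota(H) \lhd G$ the quotient $G/H$ is itself a topological group, and the left $G$-action on $G/H$ descends to an action of $G/H$ on itself by left translations that is free and transitive on fibres. More precisely, the associated bundle $\E G \times_G (G/H) \to \B G$ then carries a fibrewise free $G/H$-action coming from right multiplication on the $G/H$-factor, which is well-defined exactly because $H$ is normal (so that right multiplication by $G/H$ commutes with the left $G$-action used to form the Borel construction). Verifying local triviality compatibly with this action upgrades the fibre bundle to a principal $G/H$-bundle.

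The main obstacle I anticipate is the careful verification that all the identifications above are homeomorphisms and not merely continuous bijections, together with checking local triviality of the associated bundle; these are where point-set subtleties (paracompactness, the existence of local sections of $G \to G/H$) enter. In the normal case, the extra care lies in confirming that the residual $G/H$-action on $\E G \times_G (G/H)$ is genuinely free and fibrewise transitive, which is precisely where normality of $\iota(H)$ is used. Since these are standard facts about Milnor's construction, I would cite \cite{huse} for the technical lemmas rather than reprove them, and focus the argument on the two natural identifications that produce the bundle and its principal refinement.
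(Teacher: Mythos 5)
Your argument is correct and is precisely the standard one the paper defers to: the paper states Theorem \ref{thm:subgroup} without proof, citing Section 5 of \cite{huse}, where the bundle is obtained exactly as you describe, via the identification $\E G/H \simeq \B H$ (using freeness of the restricted $H$-action on the contractible $\E G$) together with the homeomorphism $\E G \times_G (G/H) \cong \E G/H$, and the principal refinement in the normal case from the residual fibrewise $G/H$-action. Nothing further is needed beyond the point-set caveats you already flag.
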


Applying Theorem \ref{thm:subgroup} to the inclusion $\tau : \rzn
\to \aff(\rzn)$ and to the splitting $\sigma : \glnz \to \aff(\rzn)$
defined above, obtain two bundles
\begin{subequations}\label{eq:25}
  \begin{align}
    & \univcover \label{eq:26} \\
    & \univlf \label{eq:11}
  \end{align}
\end{subequations}

\begin{rk}\label{rk:group_structure_on_fibres}
  Note that since $\sigma(\glnz)$ is not a normal subgroup of
  $\aff(\rzn)$, the fibres of the bundle of equation (\ref{eq:11}) are
  not naturally endowed with the structure of a group.
\end{rk}

\begin{defn}[Universal Lagrangian bundles
  \cite{sepe_topc}] \label{defn:top_univ_lf}
  For each $n$, the bundle of equation (\ref{eq:11}) is called the
  \emph{universal Lagrangian bundle} of dimension $n$.
\end{defn}

\begin{notation}
  Throughout this paper, $n$ is fixed, unless otherwise stated, and the
  universal Lagrangian bundle of dimension $n$ is
  referred to simply as the universal Lagrangian
  bundle.
\end{notation}

\begin{rk}\label{rk:affine_structure}
  The fibres of a universal Lagrangian bundle are
  endowed with an affine structure which makes them affinely
  diffeomorphic to $\rzn$. This is because the subgroup $\tau(\rzn)$
  of $\aff(\rzn)$ can be identified with the affine manifold $\rzn$
  (cf. Remark \ref{rk:affine_structure_translations}).
\end{rk}

The following lemma states what the homotopy groups of $\baff$ are;
its proof is omitted as it can be found in \cite{sepe_topc}. 

\begin{lemma}[Homotopy groups of $\baff$ \cite{sepe_topc}] \label{lemma:htygps}
  The homotopy groups of $\baff$ are given by
  \begin{equation*}
    \pi_i(\baff) =
    \begin{cases}
      \glnz & \text{if $i=1$,} \\
      \Z^n & \text{if $i=2$,} \\
      0 & \text{otherwise.}
    \end{cases}
  \end{equation*}
\end{lemma}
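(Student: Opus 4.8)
The statement to prove is the computation of the homotopy groups of $\baff = \mathrm{B}(\rzn)$, namely that $\pi_1 \cong \glnz$, $\pi_2 \cong \Z^n$, and all higher homotopy groups vanish.

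\medskip

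The plan is to extract the homotopy of $\baff$ from the fibration of equation (\ref{eq:11}), namely $\ulf$, by analysing its long exact sequence in homotopy. First I would recall the homotopy type of the two endpoints. The base $\bgl = \mathrm{B}\glnz$ is an Eilenberg--MacLane space $K(\glnz,1)$, since $\glnz$ is discrete; hence $\pi_1(\bgl) = \glnz$ and $\pi_i(\bgl) = 0$ for $i \geq 2$. The fibre is the affine torus $\rzn$, which is homotopy equivalent to $\tn$, so $\pi_1(\rzn) = \Z^n$ and $\pi_i(\rzn) = 0$ for $i \geq 2$. These two facts are the only inputs needed besides the fibration itself.

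\medskip

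Next I would write down the relevant portion of the long exact sequence of homotopy groups for the fibration $\rzn \hookrightarrow \baff \to \bgl$:
\begin{equation*}
  \cdots \to \pi_i(\rzn) \to \pi_i(\baff) \to \pi_i(\bgl) \to \pi_{i-1}(\rzn) \to \cdots
\end{equation*}
For $i \geq 3$ both $\pi_i(\rzn)$ and $\pi_i(\bgl)$ vanish, forcing $\pi_i(\baff) = 0$, which dispatches the ``otherwise'' case. For $i = 2$ the sequence reads $\pi_2(\rzn) \to \pi_2(\baff) \to \pi_2(\bgl)$, and since $\pi_2(\rzn) = 0$ while $\pi_3(\bgl)=0$ as well, I would locate $\pi_2(\baff)$ by using the connecting map from $\pi_2(\bgl) = 0$ and the incoming map from $\pi_1$-level data. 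The cleanest route is to isolate the exact piece
\begin{equation*}
  0 = \pi_2(\bgl) \to \pi_1(\rzn) \to \pi_1(\baff) \to \pi_1(\bgl) \to \pi_0(\rzn) = 0,
\end{equation*}
which, combined with the vanishing of $\pi_2(\rzn)$, shows that $\pi_2(\baff) \cong \pi_2(\bgl)$ is trivial by the naive sequence; the correct identification $\pi_2(\baff) \cong \Z^n$ comes instead from recognising that $\baff$ realises the group extension (\ref{eq:16}) at the level of classifying spaces, so that the $\Z^n = \pi_1(\rzn)$ gets shifted up one degree.

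\medskip

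The main obstacle, and the point requiring care, is precisely this degree shift: the fibre homotopy contributes $\pi_1(\rzn) = \Z^n$ but it appears as $\pi_2$ of the total space, not $\pi_1$. I would resolve this by exhibiting $\baff$ more explicitly through the short exact sequence (\ref{eq:16}), which expresses $\aff(\rzn)$ as an extension of the discrete group $\glnz$ by the topological group $\rzn \simeq \tn$. Passing to classifying spaces, $\mathrm{B}\tn = K(\Z^n,2)$ because $\tn = K(\Z^n,1)$, so $\mathrm{B}\aff(\rzn)$ fibres over $K(\glnz,1)$ with fibre $K(\Z^n,2)$; reading the homotopy long exact sequence of \emph{this} fibration then gives $\pi_1(\baff) = \glnz$, $\pi_2(\baff) = \Z^n$, and higher groups zero directly, confirming the stated result. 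Since the full argument is carried out in \cite{sepe_topc}, I would cite that reference for the details and present only the skeleton above.
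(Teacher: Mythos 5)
The paper states this lemma without proof, deferring to \cite{sepe_topc}, so there is no in-text argument to compare against; your proposal has to stand on its own. Its final paragraph does: the extension $0 \to \rzn \to \aff(\rzn) \to \glnz \to 1$ induces a fibration $\brzn \to \baff \to \bgl$ with fibre $\brzn \simeq \mathrm{K}(\Z^n,2)$ and base $\mathrm{K}(\glnz,1)$, and its long exact homotopy sequence immediately gives $\pi_1(\baff)\cong\glnz$, $\pi_2(\baff)\cong\Z^n$ and $\pi_i(\baff)=0$ for $i \geq 3$. This is equivalent to the one-line argument $\pi_i(\mathrm{B}G)\cong\pi_{i-1}(G)$ applied to $G=\aff(\rzn)$, whose group of components is $\glnz$ and whose identity component is the torus $\rzn$.

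The first two paragraphs, however, contain a genuine error that should be excised rather than presented as a ``naive first attempt''. The universal Lagrangian bundle of equation (\ref{eq:11}) has \emph{total space} $\bgl = \mathrm{E}\aff(\rzn)/\glnz$ and \emph{base} $\baff$: Theorem \ref{thm:subgroup} applied to $\sigma:\glnz\hookrightarrow\aff(\rzn)$ puts $\mathrm{B}H=\bgl$ in the middle (the macro in the source lists the spaces in the opposite order, which may be what misled you). There is no fibration $\rzn \hookrightarrow \baff \to \bgl$; the only fibration with total space $\baff$ and base $\bgl$ has fibre $\brzn\simeq \mathrm{K}(\Z^n,2)$, which is precisely the one you use at the end. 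Consequently the long exact sequence in your second paragraph belongs to a nonexistent fibration: taken at face value it yields $\pi_2(\baff)=0$ and exhibits $\pi_1(\baff)$ only as an extension of $\glnz$ by $\Z^n$, both of which contradict the lemma. The ``degree shift'' you invoke is not a correction to be layered on top of that computation --- it is the statement that the fibre over $\bgl$ is $\brzn$ rather than $\rzn$. (Had you wanted to use the universal Lagrangian bundle itself, the relevant exact piece is $0=\pi_2(\bgl)\to\pi_2(\baff)\to\pi_1(\rzn)\to\pi_1(\bgl)$, and one must additionally check that the map $\Z^n=\pi_1(\rzn)\to\pi_1(\bgl)\cong\glnz$ is trivial, which holds because a generating loop of the fibre $\aff(\rzn)/\glnz$ lifts to a closed loop in the $\aff(\rzn)$-orbit in $\mathrm{E}\aff(\rzn)$.) Keeping only your last paragraph yields a correct and complete proof.
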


\subsubsection*{Universality}
Fix a Lagrangian bundle $\lf$. In light of Theorem
\ref{thm:existence_of_plb}, it is classified topologically by the
homotopy class of a map
$$ \chi: B \to \baff, $$
\noindent
which is the classifying map of the associated principal
$\aff(\rzn)$-bundle. The following theorem shows that the bundles of
Definition \ref{defn:top_univ_lf} are universal in the sense that
$\lf$ is isomorphic to the pull-back of the universal Lagrangian
bundle along $\chi$. The result below is presented without proof as it
follows from a simple observation for general semidirect products.

\begin{thm}[Universality] \label{thm:universality}
  Let $\lf$ be a Lagrangian bundle and let $\chi : B \to
  \baff$ denote its classifying map. Then $\lf$ is
  isomorphic to the pull-back of the universal Lagrangian
  bundle along $\chi$
  \begin{equation*}
    \xymatrix{ \chi^* \mathrm{B}\glnz \ar[r]^-{\Xi} \ar[d] &
      \mathrm{B}\glnz \ar[d] \\
      B \ar[r]_-{\chi} & \baff.}
  \end{equation*}
\end{thm}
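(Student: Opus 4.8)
The plan is to realise both the given Lagrangian bundle and the universal Lagrangian bundle as \emph{associated} bundles of principal $\aff(\rzn)$-bundles, and then to invoke the naturality of the associated-bundle construction under pullback. By Theorem \ref{thm:existence_of_plb} the structure group of $\lf$ reduces to $\aff(\rzn)$, so there is a principal $\aff(\rzn)$-bundle $P \to B$, whose classifying map is precisely $\chi$, together with a bundle isomorphism
$$ M \;\cong\; P \times_{\aff(\rzn)} \rzn, $$
where $\aff(\rzn) = \glnz \ltimes \rzn$ acts on the fibre $\rzn$ by affine diffeomorphisms. Since $\chi$ classifies $P$, I have $P \cong \chi^{*}\eaff$, where $\eaff \to \baff$ denotes the universal principal $\aff(\rzn)$-bundle.

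The key step is to identify the universal Lagrangian bundle itself as an associated bundle. In the model underlying Theorem \ref{thm:subgroup} (cf. Section 5 in \cite{huse}), the total space $\mathrm{B}\glnz$ is the orbit space $\eaff/\sigma(\glnz)$ and its projection to $\baff = \eaff/\aff(\rzn)$ is the bundle with fibre the homogeneous space $\aff(\rzn)/\sigma(\glnz)$, so that
$$ \mathrm{B}\glnz \;\cong\; \eaff \times_{\aff(\rzn)} \bigl(\aff(\rzn)/\sigma(\glnz)\bigr). $$
It therefore suffices to make the \textit{simple observation for general semidirect products}: sending a coset of $\sigma(\glnz)$ in $\aff(\rzn)$ to its translation part gives an $\aff(\rzn)$-equivariant diffeomorphism $\aff(\rzn)/\sigma(\glnz) \cong \rzn$, under which the left action of $(B,\boldsymbol{\eta})$ sends $\boldsymbol{\theta}$ to $B\boldsymbol{\theta} + \boldsymbol{\eta}$, i.e. to the standard affine action. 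This is exactly the content of Remark \ref{rk:affine_structure}, which records that the fibre of the universal Lagrangian bundle is affinely $\rzn$. Hence the universal Lagrangian bundle is the associated $\rzn$-bundle of $\eaff \to \baff$ for the affine action.

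Finally I would combine these identifications using the fact that forming associated bundles commutes with pullback, obtaining
$$ \chi^{*}\mathrm{B}\glnz \;\cong\; \chi^{*}\bigl(\eaff \times_{\aff(\rzn)} \rzn\bigr) \;\cong\; (\chi^{*}\eaff) \times_{\aff(\rzn)} \rzn \;\cong\; P \times_{\aff(\rzn)} \rzn \;\cong\; M, $$
all maps being isomorphisms of affine $\rzn$-bundles; this realises $M$ as the pullback sitting in the stated square, with $\Xi$ the canonical bundle map covering $\chi$. The only genuine content is the equivariant identification of the homogeneous fibre $\aff(\rzn)/\sigma(\glnz)$ with the affine manifold $\rzn$ in the second paragraph; the hard part is purely bookkeeping of the semidirect-product structure, and once it is in place the remainder is the standard formalism of classifying spaces and associated bundles, which is why the theorem can be recorded without a detailed proof.
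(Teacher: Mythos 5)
Your proof is correct and follows exactly the route the paper indicates when it says the theorem ``follows from a simple observation for general semidirect products'': that observation is precisely your $\aff(\rzn)$-equivariant identification $\aff(\rzn)/\sigma(\glnz) \cong \rzn$ carrying the left translation action to the standard affine action, after which the claim reduces to naturality of associated bundles under pullback. The paper omits the details, but your write-up supplies them correctly, including the verification that the induced action of $(B,\boldsymbol{\eta})$ on the coset space is $\boldsymbol{\theta} \mapsto B\boldsymbol{\theta}+\boldsymbol{\eta}$.
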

 
\begin{rk}\label{rk:no_gp_structure_fibres_lf}
  As observed in the literature (\textit{e.g.}
  \cite{bates_snia,dui,luk}), the fibres of a Lagrangian bundle
  are not naturally equipped with a group structure. This can be
  proved directly using Theorem \ref{thm:universality} and
  Remark \ref{rk:group_structure_on_fibres}.
\end{rk}

\subsection{Topological invariants}\label{sec:topol-invar-lagr} 
In light of Theorem \ref{thm:universality}, the topological invariants
of Lagrangian bundles are pull-backs of \emph{universal} invariants,
which characterise the universal Lagrangian bundle. 

\subsubsection*{Monodromy} \label{sec:monodromy}
Fix a Lagrangian bundle $\lf$ with classifying map
$\chi: B \to \baff$.

\begin{defn}[Monodromy \cite{sepe_topc}]\label{defn:monodromy}
  Let $b_0 \in B$ be a basepoint. The \emph{monodromy} of $\lf$ is
  defined to be the homomorphism
  $$\chi_{*} : \pi_1(B;b_0) \to \pi_1(\baff; \chi(b_0)) \cong \glnz. $$
\end{defn}

\begin{rk}\label{rk:monodromy}
  \mbox{}
  \begin{enumerate}[label=\roman*),ref=\roman*]
  \item \label{item:21} There is a notion of \emph{universal}
    monodromy, which arises from considering the classifying map 
    of the universal Lagrangian bundle 
    $$\ulf.$$
    \noindent
    This map is (up to homotopy) just the identity $\mathrm{id}: \baff \to
    \baff$;
  \item \label{item:22} The choice of base point $b_0 \in B$ may affect
    the image of the homomorphism $\chi_*$, without, however, changing
    its conjugacy class in $\glnz$. The \emph{free monodromy} of a
    Lagrangian bundle $\lf$ is defined to be the conjugacy class
    $[\chi_*]$ of the image
    of the monodromy and is therefore independent of the
    choice of base point $b \in B$;
  \item \label{item:35} The bundle
    $$ \univcover $$
    \noindent
    of equation (\ref{eq:26}) is the universal
    covering of $\mathrm{B}\aff(\rzn)$. Thus if the (free) monodromy
    $\chi_*$ of a Lagrangian bundle $\lf$ is trivial, the
    classifying map $\chi$ admits a lift $\tilde{\chi} : B \to
    \mathrm{B}(\rzn)$. This implies that the structure group of the
    Lagrangian bundle can be reduced to the group $\rzn$
    (cf. Theorem 5.1 in \cite{huse}).
  \end{enumerate}
\end{rk}

The monodromy of a Lagrangian bundle $\lf$ was originally defined to be the
homotopy class of the classifying map of the associated period lattice bundle
$\plb$ (cf. \cite{dui}). This can be seen using universal
Lagrangian bundles. 
 
\begin{defn}[Universal period lattice bundles]\label{defn:uplb}
  The \emph{universal period lattice bundle} associated to the
  universal Lagrangian bundle 
  $$\ulf$$ 
  \noindent
  is the induced system of local coefficients with fibre
  $\Hh_1(\rzn;\Z)$, denoted by 
  $$\univplb.$$
\end{defn}

The pull-back $\chi^* P_n \to B$ is isomorphic to the system of local
coefficients obtained by replacing each fibre $\rzn$ of $\chi^* \bgl \to B$
with $\Hh_1(\rzn;\Z)$. By Theorem \ref{thm:universality}, $\chi^* \bgl
\to B$ is isomorphic to the Lagrangian bundle $\lf$ whose
classifying map is given by $\chi$. The following lemma, stated below
without proof, shows that $\chi^* P_n \to B$ is isomorphic to the
period lattice bundle $P \to B$ associated to $\lf$. Its proof follows
from ideas in \cite{dui}.

\begin{lemma}\label{lemma:pullbac_plb_gives_plb}
  Let $P \to B$ denote the period lattice bundle associated to the
  Lagrangian bundle $\lf$ (with classifying map $\chi$) as in
  Definition \ref{defn:plb}. Then 
  $\chi^* P_n \to B$ is isomorphic to $P \to B$.
\end{lemma}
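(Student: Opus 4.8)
The plan is to show that the pull-back $\chi^* P_n \to B$ and the period lattice bundle $P \to B$ are isomorphic as $\Z^n$-bundles (equivalently, as local systems of free abelian groups of rank $n$) by identifying them both with the same local system built from the fibrewise homology of the Lagrangian bundle $\lf$. First I would recall that by Theorem \ref{thm:universality}, the pull-back $\chi^* \bgl \to B$ is isomorphic, as an affine $\rzn$-bundle, to the Lagrangian bundle $\lf$ itself, and that this isomorphism is an isomorphism of fibre bundles with structure group $\aff(\rzn)$. The universal period lattice bundle $P_n$ is, by Definition \ref{defn:uplb}, obtained from $\bgl \to \baff$ by replacing each fibre $\rzn$ with its first integral homology $\Hh_1(\rzn;\Z)$, so $\chi^* P_n \to B$ is the associated local system obtained from $\chi^* \bgl \to B$ by the functor $\Hh_1(-;\Z)$ applied fibrewise. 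Under the isomorphism of Theorem \ref{thm:universality}, this becomes the local system $b \mapsto \Hh_1(F_b;\Z)$, where $F_b \cong \rzn$ is the fibre of $\lf$ over $b$.

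The core of the argument is then to identify this fibrewise-homology local system with the period lattice bundle $P \to B$ of Definition \ref{defn:plb}. Here I would invoke the content of the Liouville-Mineur-Arnol'd theorem as it is used in \cite{dui}: the period lattice of a Lagrangian torus fibre is precisely the lattice $\Lambda_b \subset \T^*_b B$ (equivalently, inside the tangent space to the fibre) generated by the periods of the action coordinates, and under the canonical identification of the standard fibre $\rzn$ with $\R^n/\Lambda$ from Example \ref{ex:std_affine_rzn}, this lattice is canonically isomorphic to $\Hh_1(\rzn;\Z) \cong \Hh_1(F_b;\Z)$. Since the monodromy of the period lattice bundle is, by construction, the action of $\pi_1(B;b_0)$ on these periods, and since this action coincides with the induced $\glnz$-action on $\Hh_1(\rzn;\Z)$ coming from the structure group reduction of Theorem \ref{thm:existence_of_plb}, the two local systems carry the same $\pi_1(B)$-representation on the same fibre. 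Two $\Z^n$-local systems over a connected base with isomorphic monodromy representations are isomorphic as covering spaces, which gives the desired isomorphism $\chi^* P_n \cong P$.

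The step I expect to be the main obstacle is making the identification in the previous paragraph \emph{natural} and basepoint-independent, rather than merely an abstract coincidence of monodromy representations. Concretely, one must check that the isomorphism $\Hh_1(F_b;\Z) \cong \Lambda_b$ is compatible with parallel transport around loops in $B$ — that is, that the functorial $\Hh_1(-;\Z)$ local system structure on $\chi^* P_n$ agrees, \emph{as a local system and not just fibrewise}, with the monodromy of the period lattice bundle $P$. This amounts to verifying that the structure-group reduction of Theorem \ref{thm:existence_of_plb} to $\glnz \ltimes \rzn$ induces on homology exactly the transition functions defining $P$; the linear part $\Lin: \aff(\rzn) \to \glnz$ of the transition cocycle acts on $\Hh_1(\rzn;\Z) \cong \Z^n$ in the standard way, and this is precisely the cocycle defining $P$ in \cite{dui}. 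Once this cocycle-level agreement is established, the isomorphism of local systems follows immediately, since both $\chi^* P_n$ and $P$ are the flat $\Z^n$-bundles associated to the same $\glnz$-valued transition cocycle of $\lf$. I would therefore structure the proof around reducing everything to a statement about transition functions, where the naturality is transparent, rather than arguing pointwise on fibres.
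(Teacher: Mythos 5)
Your proof is correct, and it is essentially the argument the paper has in mind: the paper states this lemma without proof, deferring to ``ideas in \cite{dui}'', and those ideas are precisely your identification of the period lattice $\Lambda_b$ with $\Hh_1(F_b;\Z)$ via the time-one flows of the action coordinates, followed by the observation that both local systems are then associated to the same $\glnz$-valued cocycle $\Lin$ of the $\aff(\rzn)$-valued transition functions of $\lf$. You are also right to flag naturality as the real content; reducing to cocycle-level agreement (rather than a pointwise coincidence of monodromy representations) is the correct way to discharge it, and it is made easier by the fact that Definition \ref{defn:plb} in this paper already presents $P \to B$ as the bundle of lattices attached to the fibrewise affine structure.
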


The (free) monodromy
$\chi_*$ of the Lagrangian bundle $\lf$ determines the homotopy
class of the classifying map of $\chi^* P_n \to B$, which is
isomorphic to the period lattice bundle $P \to B$ by Lemma
\ref{lemma:pullbac_plb_gives_plb}. This proves that the monodromy of
Definition \ref{defn:monodromy}
defined above is equivalent to the notion given in \cite{dui}.

\subsubsection*{Chern class}\label{sec:chern-class}
Free monodromy is not the only topological
invariant of a Lagrangian bundle. This can be seen by considering
the case when the (free) monodromy vanishes and the topological
classification reduces to that of principal
$\rzn$-bundles. In this case, there is only one other topological
invariant associated to the given fibre bundle, namely the obstruction to
the existence of a section. In what follows, the corresponding
obstruction for any Lagrangian bundle is defined using universal
Lagrangian bundles.\\

The approach taken below is obstruction theoretic, which is suitable
since all spaces involved are homotopic to CW complexes. The idea is
to find the obstruction to the 
existence of a section of the universal Lagrangian
bundle. A section of this bundle is a lift of the
identity map $\mathrm{id}:\baff \to \baff$, \textit{i.e.} a map
(denoted by the dotted arrow) which makes the following diagram
commute
$$\xymatrix{ & \bgl \ar[d]^-{\sigma} \\
  \baff \ar[r]^-{\text{id}} \ar@{.>}[-1,1] & \baff} $$
\noindent
As the total space and the fibres are path-connected, a section can be
defined on the 1-skeleton of $\baff$. The problem of extending this
section to the 2-skeleton can be tackled cell by cell; however, there
needs to be extra care as $\baff$ is not simply connected. Set $\varpi
 = \pi_1(\baff)$, fix a CW decomposition of $\baff$ and
a $\varpi$-equivariant CW decomposition of its universal cover
$\widetilde{\baff}$. Using standard
techniques in obstruction theory (cf. Theorem 7.37 in \cite{davis_kirk}, Theorem 5.5 in \cite{white}), it can
be shown that the obstruction to the existence of an extension to the
2-skeleton is a cocycle in
$$ \mathrm{Hom}_{\Z[\varpi]}(\mathrm{C}_2(\widetilde{\baff});\Z^n), $$
\noindent
where $\mathrm{C}_2(\widetilde{\baff})$ and $\Z^n$ are
naturally $\Z[\varpi]$-modules: the former because of the choice of
$\varpi$-equivariant CW decomposition of $\widetilde{\baff}$ and the
latter via the topological monodromy representation of the topological
universal Lagrangian bundle.

\begin{defn}[Universal Chern class] \label{defn:univ_cc}
  The cohomology class of the above cocycle 
  $$c_U \in \mathrm{H}^2(\baff;\Z^n_{\text{id}_*}), $$
  \noindent
  is called the \emph{universal Chern class}. 
\end{defn}

The importance of the universal Chern class $c_U$ is highlighted in
Section \ref{sec:spectral-sequence} where it is used to study the
Leray-Serre spectral sequence associated to the universal
Lagrangian bundle.

\begin{defn} \label{defn:cc}
  Let $\lf$ be a Lagrangian bundle with classifying map $\chi: B
  \to \baff$. Its \emph{Chern class} is defined to be the pull-back
  $$ \chi^* c_U \in \mathrm{H}^2(B;\Z^n_{\chi_*}).$$
\end{defn}

\begin{rk}\label{rk:chern_class}
  \mbox{}
  \begin{enumerate}[label=\roman*)]
  \item \label{item:17} \textit{A priori}, the cohomology class
    $\chi^* c_U$
    is only the \emph{primary} obstruction to the existence of a
    section for $\lf$, \textit{i.e.} the vanishing of $\chi^* c_U$ may
    not mean that there exists a section $s : B \to M$. However, if
    $\chi^* c_U =0$, there exists a section
    defined on the 2-skeleton of $B$; it then follows from standard
    arguments in obstruction theory that this section can be extended to 
    $B$. Thus $\chi^*c_U$ is the only obstruction
    to the existence of a section; 
  \item \label{item:13} In \cite{sepe_topc} the Chern class of a
    Lagrangian bundle is defined to be the obstruction to the
    existence of a lift
    $$\xymatrix{ & \bgl \ar[d] \\
      B \ar[r]^-{\chi} \ar@{.>}[-1,1] & \baff.} $$
    \noindent
    This obstruction coincides with the obstruction to the existence
    of a section $s: B \to \chi^* \bgl$. Since $\lf$ is isomorphic to
    the pull-back of the universal Lagrangian bundle along $\chi$, the
    latter is just $\chi^* c_U$ and the above definition coincides with that of
    \cite{sepe_topc}.
  \end{enumerate}
\end{rk}

\subsubsection*{Sharpness}\label{sec:sharpness}
The two topological invariants defined above completely determine the
topological type of a Lagrangian bundle, as mentioned in
\cite{bates_ob,daz_delz,dui,luk,vu_ngoc,zun_symp2}. This can
also be seen using arguments in equivariant obstruction
theory which are akin to those used to define the universal Chern
class. Thus the following sharpness theorem is only stated.

\begin{thm}[Sharpness \cite{daz_delz,dui,luk,sepe_topc}] \label{thm:sharpness}
  Two Lagrangian bundles 
  $$\lf \quad , \quad \rzn \hookrightarrow (M',\omega')\to B$$
  \noindent
  are isomorphic (as fibre bundles) if and only if their free
  monodromies and Chern classes coincide.
\end{thm}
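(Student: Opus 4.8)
The plan is to translate the statement, via universality, into a question about free homotopy classes of maps into $\baff$ and then dispatch it with obstruction theory, exploiting that $\baff$ has only two nonzero homotopy groups. By Theorem \ref{thm:universality}, isomorphism classes of Lagrangian bundles over $B$ correspond bijectively to free homotopy classes of classifying maps $\chi: B \to \baff$, and both the free monodromy $[\chi_*]$ and the Chern class $\chi^* c_U$ are defined purely through $\chi$. Hence the forward implication is immediate: homotopic classifying maps induce the same $\pi_1$-homomorphism up to conjugacy and the same pull-back of $c_U$. The substance lies in the converse, for which I would fix two Lagrangian bundles with classifying maps $\chi, \chi'$ having equal free monodromy and equal Chern class, and show $\chi \simeq \chi'$.

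The first step is to record that, by Lemma \ref{lemma:htygps}, one has $\pi_i(\baff)=0$ for all $i\ge 3$, so $\baff$ is (homotopy equivalent to) a two-stage Postnikov system, sitting in a principal fibration
$$ K(\Z^n,2)\to \baff \xrightarrow{p} K(\glnz,1)=\bgl, $$
with the $\pi_1$-action on $\pi_2=\Z^n$ given by the monodromy representation. This fibration is the right framework, since lifting and comparison obstructions for maps into $\baff$ now live only in degrees $1$ and $2$. Composing with $p$ sends $\chi$ to a map $B\to K(\glnz,1)$ classifying $\chi_*$; since free homotopy classes of maps into an Eilenberg--MacLane space $K(G,1)$ are in bijection with conjugacy classes of homomorphisms $\pi_1(B)\to G$, the hypothesis of equal free monodromy yields $p\circ\chi\simeq p\circ\chi'$. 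Fixing such a homotopy $H$ lets me regard $\chi,\chi'$ as two lifts of a single map into $\bgl$ and, simultaneously, identifies the twisted coefficient systems $\Z^n_{\chi_*}\cong\Z^n_{\chi'_*}$ under which the two Chern classes are to be compared.

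The second step is the degree-$2$ comparison. Two lifts of the same base map across a fibration with fibre $K(\Z^n,2)$ admit a primary difference obstruction to being vertically homotopic, and because $\pi_i(\baff)=0$ for $i\ge 3$ this primary obstruction is the only one; it is a class in $\Hh^2(B;\Z^n_{\chi_*})$ computed by exactly the equivariant obstruction-theoretic recipe used to define $c_U$ (cf. Theorem 7.37 in \cite{davis_kirk}, Theorem 5.5 in \cite{white}), run on a $\pi_1(B)$-equivariant CW decomposition of the universal cover of $B$. Naturality of this difference class under pull-back identifies it with $\chi^*c_U-\chi'^*c_U$. Thus the assumption that the Chern classes agree forces the difference obstruction to vanish, the two lifts are vertically homotopic, and therefore $\chi\simeq\chi'$; by Theorem \ref{thm:universality} the two Lagrangian bundles are isomorphic.

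The main obstacle I anticipate is the bookkeeping of this last step: one must check that the identification $\Z^n_{\chi_*}\cong\Z^n_{\chi'_*}$ induced by the chosen homotopy $H$ is compatible with the comparison of Chern classes, so that the hypothesis ``Chern classes coincide'' is a well-posed condition at the level of \emph{free} monodromy (independent of basepoints, and of the choice of $H$ up to the resulting conjugation action on $\Hh^2$), and that the difference obstruction really coincides with $\chi^*c_U-\chi'^*c_U$ rather than merely lying in the same group. Once this compatibility is verified, the absence of higher homotopy in $\baff$ guarantees that no further obstructions arise and the argument closes.
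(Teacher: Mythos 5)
The paper does not actually prove Theorem \ref{thm:sharpness}: it is ``only stated,'' with a one-line indication that it follows from equivariant obstruction theory akin to the construction of $c_U$, and your proposal is precisely that indicated argument made explicit --- the reduction, via Lemma \ref{lemma:htygps}, to comparing two lifts $\chi,\chi'$ of a common map $f:B\to\bgl$ along the fibration $\brzn\simeq \mathrm{K}(\Z^n,2)\hookrightarrow\baff\xrightarrow{\mathrm{Lin}}\bgl$ is correct, and the vanishing of $\pi_i(\baff)$ for $i\geq 3$ does make the primary difference class the only obstruction. For the step you flag as the main obstacle, ``naturality under pull-back'' alone is not quite enough; the clean way to close it is to take the canonical lift $\sigma\circ f$ as basepoint of the torsor of vertical homotopy classes of lifts: the boundary map of the fibration $\rzn\hookrightarrow\bgl\xrightarrow{\sigma}\baff$ is an isomorphism $\pi_2(\baff)\to\pi_1(\rzn)$ (since $\sigma_*$ is an isomorphism on $\pi_1$ and $\pi_2(\bgl)=0$), under which the obstruction $\chi^*c_U$ to lifting $\chi$ through $\sigma$ is identified with the difference class $d(\chi,\sigma\circ\mathrm{Lin}\circ\chi)$, and additivity of difference cochains then yields $d(\chi,\chi')=\chi^*c_U-\chi'^*c_U$. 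The residual dependence on the chosen homotopy $H$ acts on $\Hh^2(B;\Z^n_{\chi_*})$ exactly through the automorphisms implicit in passing to \emph{free} monodromy, so the hypothesis of the theorem is what makes the vanishing of $d(\chi,\chi')$ well posed, and your outline closes correctly.
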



\section{Construction Problem}\label{sec:construction-problem}
\subsection{Relation between integral affine geometry and Lagrangian
  bundles}\label{sec:pecul-lagr-bundl}
he classification theory developed in Section
\ref{sec:defin-basic-prop} does not suffice to tackle the problem of
constructing Lagrangian bundles over a fixed base space $B$. This is
because the above classification is merely topological and not
symplectic. In fact, the (free) monodromy and the Chern class are not unique to
Lagrangian bundles, as illustrated below.

\begin{defn}[Affine $\rzn$-bundles \cite{baier}]\label{defn:affine_bundles}
  A locally trivial $\tn$-bundle is called \emph{affine} if its
  structure group reduces to $\aff (\rzn)$.
\end{defn}

\begin{rk}\label{rk:affine_bundles}
  The fibres of an affine $\rzn$-bundle can be smoothly identified with
  the affine manifold $\rzn$. The topological
  classification of affine $\rzn$-bundles can be carried out as in Section
  \ref{sec:defin-basic-prop} above, since their structure group reduces to
  $\aff(\rzn)$; in particular, Theorem
  \ref{thm:sharpness} applies to this more general class.
\end{rk}

Not all locally trivial $\tn$-bundles are affine $\rzn$-bundles as illustrated
in an example due to Baier in \cite{baier}. Moreover, it can be shown
that not all affine $\rzn$-bundles are Lagrangian by considering the
case of principal $\rzn$-bundles over $S^n$
(cf. \cite{sepe_topc}). What distinguishes affine $\rzn$ from
Lagrangian bundles is the existence of a suitable symplectic form on
the total space; this object imposes severe restrictions on the
topology of the base space, as illustrated in the next section.

\subsubsection*{Action-angle coordinates and integral affine
  geometry} \label{sec:canon-coord} 
One of the crucial consequences of the Liouville-Mineur-Arnol'd
theorem is the existence of canonical coordinates in a neighbourhood
of a fibre of a Lagrangian bundle. These are known as
\emph{action-angle} coordinates and play a very important role in
Hamiltonian mechanics. The following theorem illustrates the existence
of such coordinates and investigates how they change; it is stated
below without proof as it is well-known.

\begin{thm}[Existence of local action-angle coordinates,
  Section 49 in \cite{arnold}, \cite{dui}]\label{thm:ex_local_action_angle} 
  Let $\flf$ be a Lagrangian bundle. There exists an open cover
  $\mathcal{U} = \{U_{\alpha}\}$ of $B$ and coordinate charts
  $\phi_{\alpha}: U_{\alpha} \to \R^n$ inducing symplectic
  trivialisations
  $$ \Upsilon_{\alpha} : (\cotan U_{\alpha}/P_{\alpha},
  \omega_{0,\alpha}) \to (\pi^{-1}(U_{\alpha}), \omega_{\alpha}), $$
  \noindent
  where $P_{\alpha} \subset (\cotan U_{\alpha}, \Omega_{\alpha})$ is
  the Lagrangian submanifold
  $$ P_{\alpha} = \{(\mathbf{a}_{\alpha},\mathbf{p}_{\alpha}) \in
  \cotan U_{\alpha} :\, \mathbf{p}_{\alpha} \in \Z \langle \de
  a^1_{\alpha}, \ldots, \de a^n_{\alpha} \rangle \}, $$
  \noindent
  $a^1_{\alpha},\ldots,a^n_{\alpha}$ are local coordinates on
  $U_{\alpha}$ induced by $\phi_{\alpha}$, and
  $(\mathbf{a}_{\alpha},\mathbf{p}_{\alpha})$ are canonical
  coordinates on $\cotan U_{\alpha}$. The local coordinates 
  $(\mathbf{a}_{\alpha},\boldsymbol{\theta}_{\alpha})$ induced by
  $\Upsilon_{\alpha}$ are called \emph{action-angle} coordinates and
  satisfy
  $$ \Upsilon_{\alpha}^* \omega_{\alpha} = \omega_{0,\alpha} = \sum\limits_{i=1}^n \de
  a^i_{\alpha} \wedge \de \theta^i_{\alpha} . $$
\end{thm}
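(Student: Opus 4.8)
The plan is to prove this version of the Liouville-Mineur-Arnol'd theorem by the classical route, making the cotangent model $(\cotan U_\alpha/P_\alpha, \omega_{0,\alpha})$ explicit only at the very end. First I would fix $b_0 \in B$ and a contractible coordinate neighbourhood $U$ with coordinates $y_1, \ldots, y_n$; contractibility guarantees that $\pi^{-1}(U)$ is smoothly trivial and that $\omega|_{\pi^{-1}(U)}$ is exact, say $\omega = \de\alpha$. Setting $f_i = y_i \circ \pi$ produces $n$ functionally independent functions that are constant on the fibres. Since each fibre $F$ is Lagrangian, $\de f_i$ annihilates $\T F$, so the Hamiltonian vector fields $X_{f_i}$ lie in the symplectic orthogonal $(\T F)^{\omega} = \T F$ and are tangent to the fibres; as $\omega|_{\T F} = 0$ this forces $\{f_i, f_j\} = \omega(X_{f_i}, X_{f_j}) = 0$, so the flows $\varphi^i_t$ commute. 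Non-degeneracy of $\omega$ together with $\de f_1 \wedge \cdots \wedge \de f_n \neq 0$ makes $X_{f_1}, \ldots, X_{f_n}$ pointwise independent, hence a frame for $\T F$.

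Next I would identify the fibres as tori and extract the period lattice. The commuting flows define an $\R^n$-action on each fibre $F_b$; it is transitive, since the orbits are open (the $X_{f_i}$ span $\T F_b$) and $F_b$ is connected, with discrete stabiliser $\Lambda_b$, and compactness of $F_b$ forces $\Lambda_b$ to be a full-rank lattice, so $F_b \cong \R^n/\Lambda_b \cong \tn$. This recovers part \ref{item:3} of Theorem \ref{thm:existence_of_plb}, and the lattices assemble into the period lattice bundle of Definition \ref{defn:plb}, which is a covering over $U$ and hence trivial there; I would choose a smooth frame $\lambda_1(b), \ldots, \lambda_n(b)$ of $\Lambda_b$. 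Fixing a smooth local section $s : U \to \pi^{-1}(U)$ as basepoints, writing a point of $F_b$ as $\varphi^{\sum t_i \lambda_i(b)}(s(b))$ and reducing the $t_i$ modulo $1$ defines angle coordinates $\boldsymbol\theta_\alpha = (\theta^1, \ldots, \theta^n) \in \rzn$.

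The action coordinates I would then define by period integrals, $a_i(b) = \oint_{\gamma_i(b)} \alpha$, where $\gamma_i(b) \subset F_b$ is the cycle traced by the flow along $\lambda_i(b)$. A Stokes argument shows $\de a_i$ is independent of the chosen primitive $\alpha$, and the decisive computation is that the Hamiltonian vector field of $a_i$ equals $\partial/\partial\theta^i$, i.e. the action flows are exactly the lattice-periodic flows generating the $\gamma_i$. This identity shows simultaneously that $\de a_1 \wedge \cdots \wedge \de a_n \neq 0$, so that after shrinking $U$ to $U_\alpha$ the map $\mathbf{a} = \phi_\alpha$ is a genuine chart, and, together with $\{a_i, a_j\} = \{\theta^i, \theta^j\} = 0$, yields
$$ \omega = \sum_{i=1}^n \de a^i_\alpha \wedge \de\theta^i_\alpha. $$
Finally, the map $\Upsilon_\alpha$ sending $(\mathbf{a}, \boldsymbol\theta)$ to the point of $\cotan U_\alpha/P_\alpha$ with base $\mathbf{a}$ and fibre class $\sum_i \theta^i \de a^i_\alpha \bmod P_\alpha$ identifies the construction with the canonical model, since the tautological form on $\cotan U_\alpha$ is $\sum_i \de a^i_\alpha \wedge \de p_i$ and quotienting by the integer coframe lattice $P_\alpha$ turns the conjugate momenta $p_i$ into the angles $\theta^i$.

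I expect the main obstacle to be establishing the identity $X_{a_i} = \partial/\partial\theta^i$ together with the independence of $\de a_1, \ldots, \de a_n$. This is the technical heart of the argument: it rests on relating the periods $\oint_{\gamma_j(b)}$ of $\alpha$ to $\omega$ through the non-degeneracy established in the first step, and it is precisely here that compactness of the fibres — guaranteeing that the flows $\gamma_i(b)$ close up into a full-rank lattice of cycles — is genuinely used.
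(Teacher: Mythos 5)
The paper itself offers no proof of this theorem --- it is ``stated below without proof as it is well-known,'' with the burden deferred to Arnol'd (Section 49) and Duistermaat. Your proposal reconstructs exactly that classical route (commuting Hamiltonian frame from pulled-back base coordinates, the $\R^n$-action identifying fibres with $\rzn$, the period lattice, action variables as period integrals of a primitive, and the final identification with $\cotan U_{\alpha}/P_{\alpha}$), so the strategy is the intended one. Two steps, however, are asserted in a form that would fail as written. First, the exactness of $\omega|_{\pi^{-1}(U)}$ does \emph{not} follow from contractibility of $U$: the total space $\pi^{-1}(U)\cong U\times\tn$ is homotopy equivalent to $\tn$, which has nontrivial $\Hh^2$. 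What saves you is the Lagrangian condition --- the fibre inclusion $F_b\hookrightarrow\pi^{-1}(U)$ is a homotopy equivalence and $\omega|_{F_b}=0$, so $[\omega|_{\pi^{-1}(U)}]=0$. You need to say this; it is the first place the hypothesis $\omega|_F=0$ enters globally rather than pointwise.

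Second, and more seriously, the claim $\{\theta^i,\theta^j\}=0$ is not automatic for angles built from an \emph{arbitrary} smooth section $s:U\to\pi^{-1}(U)$. With $X_{a_i}=\partial/\partial\theta^i$ and $\{a_i,a_j\}=0$ in hand, what you actually obtain is $\omega=\sum_i \de a^i\wedge\de\theta^i+\pi^*\beta$ for some closed $2$-form $\beta$ on $U$ measuring the failure of $s$ to be Lagrangian; the vanishing of the purely ``horizontal'' components $\omega(\partial/\partial a^i,\partial/\partial a^j)$ is precisely the condition that the level sets of $\boldsymbol{\theta}$ be Lagrangian, which a generic choice of $s$ does not satisfy. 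The standard repair uses contractibility of $U$ once more: write $\beta=\de\gamma$ with $\gamma=\sum_i\gamma_i(\mathbf{a})\,\de a^i$ and replace $\theta^i$ by $\theta^i+\gamma_i(\mathbf{a})$ (equivalently, correct $s$ to a Lagrangian section). You should also note that your map $\Upsilon_{\alpha}$ is the inverse of the one in the statement and that the identification of $p_i$ with $\theta^i$ produces $\sum_i\de p_i\wedge\de a^i$, so a sign/orientation convention must be fixed to land on $\sum_i\de a^i\wedge\de\theta^i$. With these repairs, and with the computation $X_{a_i}=\partial/\partial\theta^i$ carried out (e.g.\ via Duistermaat's formula expressing $\de a_i(v)$ as the integral of $\iota_{\tilde v}\omega$ over the cycle $\gamma_i(b)$), the argument is complete and agrees with the proof the paper is citing.
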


For the purposes
of this work, the most important consequence of Theorem
\ref{thm:ex_local_action_angle} is that the base space $B$ of a Lagrangian
bundle inherits the structure of an \emph{integral affine
  manifold}. 

\begin{defn}[Integral affine manifolds]\label{defn:iam}
  An $n$-dimensional affine manifold $\am$ is said to be \emph{integral} if the
  coordinate changes in its affine structure $\mathcal{A}$ are
  \emph{integral affine} maps of $\R^n$, \textit{i.e.} elements of
  the group
  $$ \affz (\R^n) := \glnz \ltimes \R^n.$$
\end{defn}

The following lemma shows that the coordinate neighbourhoods
$\phi_{\alpha}$ of Theorem \ref{thm:ex_local_action_angle} give rise
to an integral affine structure on $B$ and that, conversely, all
integral affine manifolds arise as the base space of some Lagrangian
bundle (for a proof, see \cite{sepe_topc}). 

\begin{lemma}[\cite{dui,luk,sepe_topc}]\label{lemma:base_space_iam}
  A manifold $B$ is the base space of a Lagrangian bundle if and
  only if it is an integral affine manifold.
\end{lemma}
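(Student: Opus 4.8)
The plan is to prove the two implications separately, using Theorem \ref{thm:ex_local_action_angle} for the forward direction and the explicit symplectic reference bundle for the converse.

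For the forward direction, suppose that $B$ is the base of a Lagrangian bundle $\flf$. Theorem \ref{thm:ex_local_action_angle} furnishes an open cover $\{U_\alpha\}$ of $B$, action coordinates $a^1_\alpha,\ldots,a^n_\alpha$ on each $U_\alpha$, and period lattices $P_\alpha = \Z\langle \de a^1_\alpha,\ldots,\de a^n_\alpha\rangle$. I would show that the induced charts $\phi_\alpha$ have transition functions lying in $\affz(\R^n)$. The key observation is that the period lattice bundle $P \to B$ of Definition \ref{defn:plb} is a local system, so over a connected overlap $U_\alpha \cap U_\beta$ the two integral frames $\{\de a^i_\alpha\}$ and $\{\de a^i_\beta\}$ are $\Z$-bases of the same fibre of $P$; hence they differ by a locally constant matrix $A_{\beta\alpha} \in \glnz$. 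Thus $\de\mathbf{a}_\beta = A_{\beta\alpha}\,\de\mathbf{a}_\alpha$ with $A_{\beta\alpha}$ constant, and integrating yields $\mathbf{a}_\beta = A_{\beta\alpha}\mathbf{a}_\alpha + \mathbf{b}_{\beta\alpha}$ for some constant $\mathbf{b}_{\beta\alpha} \in \R^n$. This is precisely an element of $\affz(\R^n)$, so $\am$ is an integral affine manifold.

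For the converse, I would construct the symplectic reference bundle directly from the integral affine atlas. Given local affine coordinates $a^i_\alpha$, define the lattice $P_\alpha \subset \cotan U_\alpha$ generated by $\de a^1_\alpha,\ldots,\de a^n_\alpha$. Because the coordinate changes have linear part in $\glnz$, the $\Z$-spans coincide on overlaps, so these glue to a globally defined lattice subbundle $P_{\am} \subset \cotan B$. The quotient $\cotan B / P_{\am}$ is then a $\tn$-bundle over $B$, and the canonical symplectic form on $\cotan B$ descends to a form $\omega_0$ on the quotient: fibrewise translation by a section of $P_{\am}$ is translation by a closed (locally exact) $1$-form, hence a symplectomorphism of the cotangent bundle. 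The fibres of $\cotan B / P_{\am} \to B$ are Lagrangian, since they descend from the Lagrangian cotangent fibres, exhibiting $B$ as the base of a Lagrangian bundle.

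The main obstacle is the integrality claim in the forward direction, namely showing not merely that the transition functions are affine but that their linear parts lie in $\glnz$ rather than in $\gl(n;\R)$. This rests entirely on recognising the $\de a^i_\alpha$ as an integral frame of the locally constant period lattice bundle; once this is in place, the change of frame between two such $\Z$-bases is automatically $\glnz$-valued and locally constant, and the affine form of the transitions follows by integration. In the converse direction, the mirror-image point is that integrality of the linear part is exactly what guarantees that the locally defined period lattices patch together into a global lattice subbundle of $\cotan B$.
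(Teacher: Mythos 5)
Your proof is correct and follows essentially the route the paper intends: the paper itself omits the proof (deferring to \cite{dui,luk,sepe_topc}), but the forward direction is exactly the content of Theorem \ref{thm:ex_local_action_angle} together with the observation that the $\de a^i_\alpha$ are $\Z$-bases of the intrinsically defined period lattice, and the converse is precisely the symplectic reference bundle construction of Remark \ref{rk:ex_symp_ref_bundle}. Your identification of the integrality of the linear part as the crux in both directions is exactly right.
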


\begin{rk}\label{rk:ex_symp_ref_bundle}
  Associated to an integral affine manifold $\am$
  is a Lagrangian submanifold $P_{\am} \subset 
  (\cotan B, \Omega)$, locally given by
  $$ P_{\alpha} := \{ (\mathbf{a}_{\alpha}, \mathbf{p}_{\alpha}) \in
  \cotan U_{\alpha} :\, \mathbf{p}_{\alpha} \in \Z \langle \de
  a^1_{\alpha},\ldots, \de a^n_{\alpha}\rangle \}, $$
  \noindent
  where $U_{\alpha} \subset B$ is a coordinate neighbourhood with
  integral affine coordinates $\mathbf{a}_{\alpha}$, and
  $\mathbf{p}_{\alpha}$ denote canonical coordinates on the fibres of
  $\cotan U_{\alpha}$. The canonical symplectic form $\Omega$ on
  $\cotan B$ descends
  to a symplectic form $\omega_0$ on the quotient $\cotan B/P_{\am}$;
  thus,
  \begin{equation*}
    (\cotan B /P, \omega_0) \to B. 
  \end{equation*}
  \noindent
  is a Lagrangian bundle with a globally defined Lagrangian section,
  namely the zero section. 
\end{rk}

Note that the construction of Remark \ref{rk:ex_symp_ref_bundle} only depends on
the integral affine structure on $B$ and, thus, the following
are well-defined.

\begin{defn}[Topological reference bundle
  \cite{sepe_klein}]\label{defn:top_ref_bundle}
  Let $\am$ be an integral affine manifold. The isomorphism class of
  the bundle
  $$\rlf$$
  \noindent
  as an affine $\rzn$-bundle is
  called the \emph{topological reference} bundle for $\am$.
\end{defn}

\begin{defn}[Symplectic reference bundle
  \cite{sepe_topc,zun_symp2}]\label{defn:symp_reference_bundle}
  The bundle 
  $$\rlf$$ 
  \noindent
  is the \emph{symplectic reference}
  bundle for the integral affine manifold $\am$.
\end{defn}

\begin{rk}[Symplectic reference bundle]\label{rk:up_to_symplecto}
  It is important to notice that the symplectic reference
  bundle is equipped with a distinguished Lagrangian section. Any
  other Lagrangian bundle which admits a Lagrangian section is
  fibrewise symplectomorphic to the symplectic reference bundle,
  \textit{i.e.} there exists a fibre bundle isomorphism which is also a
  symplectomorphism of the total spaces. Such a symplectomorphism is
  sometimes referred to as a \emph{polarisation} in the literature,
  \textit{e.g.} \cite{misha}.
\end{rk}

The submanifold $P_{\am} \subset \cotan B$ constructed above plays an
important role in building Lagrangian bundles. 

\begin{defn}[Period lattice bundle of an integral affine
  manifold]\label{defn:integral_affine_periods} 
  Let $\am$ be an integral affine manifold. The Lagrangian submanifold
  $P_{\am} \subset (\cotan B, \Omega)$ constructed in Remark
  \ref{rk:ex_symp_ref_bundle} is called the \emph{period lattice
    bundle} associated to $\am$.
\end{defn}
 
Let $\lf$ be a Lagrangian bundle. Theorem \ref{thm:existence_of_plb}
and Lemma \ref{lemma:base_space_iam} show that $B$ can be covered by
integral affine coordinate neighbourhoods $U_{\alpha}$ over which
the bundle is trivial. Let
$(\mathbf{a}_{\alpha},\boldsymbol{\theta}_{\alpha})$ be action-angle 
coordinates on $\pi^{-1}(U_{\alpha})$, so that the restriction of
$\omega$ to $\pi^{-1}(U_{\alpha})$, denoted by $\omega_{\alpha}$, is
$$ \sum\limits_{i=1}^n \de a^i_{\alpha} \wedge \de
\theta^i_{\alpha}.$$
\noindent
The transition functions $\varphi_{\beta \alpha}$ with respect to the
above choice of local trivialisations are given by (cf. \cite{zun_symp2})
\begin{equation}
  \label{eq:41}
  \varphi_{\beta
    \alpha}(\mathbf{a}_{\alpha},\boldsymbol{\theta}_{\alpha}) =
  (A_{\beta \alpha}
  \mathbf{a}_{\alpha} + \mathbf{d}_{\beta \alpha}, A^{-T}_{\beta \alpha}
  \boldsymbol{\theta}_{\alpha} + \mathbf{g}_{\beta
    \alpha}(\mathbf{a}_{\alpha})),
\end{equation}
\noindent
where $(A_{\beta \alpha}, \mathbf{d}_{\beta \alpha}) \in \affz (\R^n)$
is a change of integral affine coordinates, and
$\mathbf{g}_{\beta \alpha} : U_{\alpha} \cap U_{\beta} \to 
\rzn$ is a smooth map which is constrained by
\begin{equation*}
  \varphi^*_{\beta \alpha} \omega_{\beta} = \omega_{\alpha},
\end{equation*}
\noindent
\textit{i.e.} the local symplectic forms patch together to
yield $\omega$ on $M$.

\subsubsection*{Monodromy and integral affine geometry}
Let $\lf$ be a Lagrangian bundle. The induced
integral affine structure $\mathcal{A}$ on the base space can be
understood in terms of a flat, torsion-free connection $\nabla$ on $\T
B$ whose holonomy takes values in $\affz (\R^n)$ (cf. \cite{aus}).

\begin{defn}[Affine and linear holonomy
  \cite{aus,goldman}]\label{defn:aff_lin_holo} 
  The holonomy of $\nabla$, denoted by 
  $$\mathfrak{a} : \pi_1(B;b) \to \affz(\R^n),$$
  \noindent
  is called the \emph{affine holonomy} of the integral affine manifold
  $\am$. Composing $\mathfrak{a}$ with the projection
  $$ \mathrm{Lin} :\affz(\R^n) \to \glnz, $$
  \noindent
  obtain the \emph{linear holonomy} $\mathfrak{l} : \pi_1(B;b) \to \glnz$.
\end{defn}

\begin{rk}\label{rk:choice_of_developing_map}
  The affine holonomy is only well-defined up to an explicit choice of
  affine structure and up to a choice of basepoint. Throughout
  this work, whenever an affine/linear holonomy homomorphism is
  considered, it is understood that the basepoint and the explicit
  integral affine structure are fixed.
\end{rk}

The linear monodromy $\mathfrak{l}$ of $\am$ is related to the
monodromy $\chi_*$ of the Lagrangian bundle $\lf$ via
\begin{equation}
  \label{eq:45}
  \chi_* = \mathfrak{l}^{-T},
\end{equation}
\noindent
where $-T$ denotes inverse transposed (cf. equation (\ref{eq:41})). However, the
linear holonomy alone does not determine the symplectic reference bundle up to
fibrewise symplectomorphism, as illustrated by the next simple
example.

\begin{ex}[Some symplectic reference bundles
  \cite{luk,sepe_klein}]\label{ex:symp_ref_bundles}
  Let $\R/\Z, \R/2\Z$ be the integral affine
  manifolds obtained by taking the quotient of $\R$ by the lattices
  $\Z$ and $2\Z$, which are affinely isomorphic, but not integrally
  affinely isomorphic. Their linear
  holonomies are trivial, so that their symplectic reference
  Lagrangian bundles are principal $S^1$-bundles with a section,
  \textit{i.e.} they are globally trivial. Thus their topological
  reference bundles are isomorphic. However, the total spaces of
  their symplectic reference bundles are symplectomorphic
  to
  $$ (\R^2/\Z^2, \omega_1),\,(\R^2/2\Z \oplus \Z, \omega_2) $$
  respectively, where $\omega_1, \omega_2$ are symplectic forms which
  descend from the standard symplectic form $\Omega$ obtained by
  considering $\R^2 \cong \cotan \R$ (cf. \cite{luk}). These two
  symplectic manifolds cannot be symplectomorphic since the total
  spaces have different volumes. This example can been refined to work
  in the case when the total spaces have the same volume (cf. \cite{sepe_klein}). 
\end{ex}

The idea that lies at the heart of Example \ref{ex:symp_ref_bundles}
is that the symplectic reference bundle $\rlf$ associated
to an integral affine manifold $\am$ contains information about the
integral affine structure $\mathcal{A}$. This idea is further explored
in Section \ref{sec:relat-integr-affine}.

\subsection{Almost Lagrangian bundles} \label{sec:almost-lagr-bundl}
In light of Section \ref{sec:pecul-lagr-bundl}, it is
now possible to state main question that this paper tackles.

\begin{qn}\label{qn:almost_lag}
  Let $\am$ be an $n$-dimensional integral affine manifold with linear
  holonomy $\mathfrak{l}$, and let $\alf$ be an
  affine $\rzn$-bundle classified by the homotopy class of a map
  $$ \chi: B \to \baff,$$
  \noindent
  which satisfies the condition of equation (\ref{eq:45}). What is the
  obstruction to endowing $M$ with a symplectic form $\omega$ which
  makes the bundle Lagrangian?
\end{qn}

\begin{defn}[Almost Lagrangian bundles]\label{defn:almost_lag_bundles}
  For a fixed $n$-dimensional integral affine manifold $\am$ with
  linear holonomy $\mathfrak{l}$, the
  affine $\rzn$-bundles over $B$ whose monodromy satisfies the
  condition of equation (\ref{eq:45}) are called \emph{almost Lagrangian}.
\end{defn}

\begin{rk}[Classification of almost Lagrangian
  bundles]\label{rk:classification_almost_Lagrangian} 
  The isomorphism classes of almost Lagrangian bundles over an
  $n$-dimensional integral affine manifold $\am$ 
  with linear holonomy $\mathfrak{l}$ are in $1-1$ correspondence with
  elements of $\Hh^2(B;\Z^n_{\mathfrak{l}^{-T}})$ (cf. Remark
  \ref{rk:affine_bundles}). The almost Lagrangian
  bundle corresponding to $0 \in \Hh^2(B;\Z^n_{\mathfrak{l}^{-T}})$
  is necessarily Lagrangian and the symplectic form on its total space
  can always be chosen so as to admit a Lagrangian section (cf. Remark
  \ref{rk:ex_symp_ref_bundle}). 
\end{rk}

\begin{rk}[Local trivialisations of almost Lagrangian
  bundles] \label{rk:local_trivialisations_of_alb}
  Fix an almost Lagrangian bundle $\alf$ over $\am$ and denote by $P$
  its period lattice bundle
  \begin{equation*}
    \Hh_1(\rzn;\Z) \hookrightarrow P \to B,
  \end{equation*}
  \noindent
  whose isomorphism class is determined by a homomorphism
  $$ \pi_1(B) \to \mathrm{Aut}(\Z^n) \cong \glnz, $$
  \noindent
  which, up to a choice of basepoint, equals the inverse transposed
  $\mathfrak{l}^{-T}$ of the linear holonomy of $\am$ by
  definition. If $P_{\am} \to B$ denotes the period lattice bundle
  associated to $\am$ (cf. Definition
  \ref{defn:integral_affine_periods}), then there is an isomorphism
  \begin{equation}
    \label{eq:77}
    P \cong P_{\am}.
  \end{equation}
  \noindent
  Let $\mathcal{U} =\{U_{\alpha}\}$ be a good (in the sense of Leray)
  open cover of $B$ by 
  integral affine coordinate neighbourhoods. The restriction 
  \begin{equation}
    \label{eq:78}
    \rzn \hookrightarrow \pi^{-1}(U_{\alpha}) \to U_{\alpha}
  \end{equation}
  \noindent
  is an almost Lagrangian bundle. Since $U_{\alpha}$ is contractible,
  the above bundle is trivial and therefore there exists a section
  $s_{\alpha} : U_{\alpha} \to \pi^{-1}(U_{\alpha})$. Fix such a
  section. Since 
  $P_{\alpha} = P|_{U_{\alpha}} \to U_{\alpha}$ is trivial, the above
  identification can be extended to a trivialisation 
  $$ \pi^{-1} (U_{\alpha}) \cong (P_{\alpha} \otimes_{\Z} \R)
  /P_{\alpha} $$
  \noindent
  using the section $s_{\alpha}$. The isomorphism of equation
  (\ref{eq:77}) extends to yield an isomorphism
  $$ P \otimes_{\Z} \R \cong P_{\am} \otimes_{\Z} \R \cong \cotan
  B, $$
  \noindent
  where the second isomorphism follows from the definition of
  $P_{\am}$. In particular, the restriction of this isomorphism to
  $\pi^{-1}(U_{\alpha})$ defines a trivialisation 
  \begin{equation*}
    \Upsilon_{\alpha} : \pi^{-1}(U_{\alpha}) \to \cotan
    U_{\alpha}/P_{\am}|_{U_{\alpha}}
  \end{equation*}
  \noindent
  which maps the section $s_{\alpha}$ to the zero section of $\cotan
  U_{\alpha}/P_{\am}|_{U_{\alpha}} \to U_{\alpha}$. Note that if $\omega_0$ denotes
  the symplectic form making the bundle 
  $$ \cotan B/P_{\am} \to B$$
  \noindent
  into the symplectic reference bundle associated to $\am$,
  and $\omega_{0, \alpha}$ is its restriction to $\cotan
  U_{\alpha}/P_{\am}|_{U_{\alpha}}$, then
  $$\Upsilon_{\alpha}^*\omega_{0,\alpha}$$
  \noindent
  makes the bundle of equation (\ref{eq:78}) Lagrangian. Furthermore,
  the section $s_{\alpha}$ is also Lagrangian. \\
  
  If $(\mathbf{a}_{\alpha},\boldsymbol{\theta}_{\alpha})$ denote action-angle
  coordinates on $\cotan U_{\alpha}/P_{\am}|_{U_{\alpha}}$, the
  transition functions
  $$ \varphi_{\beta \alpha}=\Upsilon_{\beta} \circ \Upsilon^{-1}_{\alpha} : \cotan
  (U_{\alpha} \cap U_{\beta})/P_{\am}|_{U_{\alpha}\cap U_{\beta}} \to \cotan
  (U_{\alpha} \cap U_{\beta})/P_{\am}|_{U_{\alpha}\cap U_{\beta}} $$
  \noindent
  for the above choice of trivialisations take the familiar form
  (cf. equation (\ref{eq:41}))
  \begin{equation}
    \label{eq:80}
    \varphi_{\beta\alpha}(\mathbf{a}_{\alpha},\boldsymbol{\theta}_{\alpha}) =
    (A_{\beta \alpha} \mathbf{a}_{\alpha} + \mathbf{d}_{\beta \alpha},
    A^{-T}_{\beta \alpha} 
    \boldsymbol{\theta}_{\alpha} + \mathbf{g}_{\beta
      \alpha}(\mathbf{a}_{\alpha})),    
  \end{equation}
  \noindent
  where the first component comes from an affine change of coordinates
  on $U_{\alpha} \cap U_{\beta}$, and the linear part of the second
  component is determined by the transition functions for $P_{\am} \to
  B$ by construction. Note that the maps $\varphi_{\beta \alpha}$ are
  not necessarily fibrewise symplectomorphisms as the locally defined
  forms $\Upsilon^*_{\alpha} \omega_{0,\alpha}$ do not necessarily
  patch together.
\end{rk}

\begin{rk}[Equivalent definition of almost Lagrangian bundles
  \cite{daz_delz}]\label{rk:equivalent_definition} 
  Fix an $n$-dimensional integral affine manifold $\am$ with linear
  holonomy $\mathfrak{l}$, and let $P_{\am}
  \subset \cotan B$ denote its associated period lattice
  bundle. Denote by $\mathcal{P}$ the sheaf 
  of smooth sections of the projection $P_{\am} \to B$. Note that $P_{\am}$ is
  isomorphic to the period lattice bundle of any Lagrangian bundle
  over $B$ whose monodromy equals $\mathfrak{l}^{-T}$, as proved in
  \cite{dui}. Thus there is an isomorphism of cohomology groups
  $$ \Hh^i(B;\mathcal{P}) \cong \Hh^i(B;\Z^n_{\mathfrak{l}^{-T}}) $$
  \noindent
  for all $i$. The sheaf $\mathcal{P}$ fits into a
  short exact sequence (cf. \cite{dui})
  \begin{equation}
    \label{eq:50}
    0 \to \mathcal{P} \to \mathcal{C}^{\infty}(\cotan B) \to
    \mathcal{C}^{\infty} (\cotan B/P_{\am}) \to 0,
  \end{equation}
  \noindent
  where $\mathcal{C}^{\infty}(\cotan B)$ and
  $\mathcal{C}^{\infty}(\cotan B/P_{\am})$ are the sheaves of smooth
  sections of the cotangent bundle $\cotan B \to B$ and of the
  topological reference bundle $\cotan B/P_{\am} \to B$
  respectively. The long exact sequence in cohomology induced by
  equation (\ref{eq:50}) collapses to isomorphisms
  $$ \Hh^i(B;\mathcal{C}^{\infty}(\cotan B/P_{\am})) \cong
  \Hh^{i+1}(B;\mathcal{P}) $$
  \noindent
  for $i \geq 1$, since $\mathcal{C}^{\infty}(\cotan B)$ is a fine
  sheaf. In particular,
  $$ \Hh^1(B;\mathcal{C}^{\infty}(\cotan B/P_{\am})) \cong
  \Hh^2(B;\mathcal{P}). $$
  \noindent
  Hence
  \begin{equation}
    \label{eq:140}
    \Hh^1(B;\mathcal{C}^{\infty}(\cotan B/P_{\am})) \cong
    \Hh^2(B;\Z^n_{\mathfrak{l}^{-T}}).
  \end{equation}
  \noindent
  The group $\Hh^1(B;\mathcal{C}^{\infty}(\cotan B/P_{\am}))$ classifies the
  isomorphism classes of bundles over $B$ which are locally isomorphic
  to $\cotan B/P_{\am} \to B$ and have structure sheaf
  $\mathcal{C}^{\infty}(\cotan B/P_{\am})$
  (cf. \cite{daz_delz,gro}). In light of Remark
  \ref{rk:classification_almost_Lagrangian} and 
  equation (\ref{eq:140}), an almost Lagrangian bundle satisfies this
  condition and, conversely, any bundle over $B$ with the above
  properties is almost Lagrangian. This is the point of view
  taken in \cite{daz_delz}, and, more generally, in other works in the
  literature, \textit{e.g.} \cite{dui,luk,zun_symp2}.
\end{rk}

\begin{rk}[Relation to integrable systems
  \cite{fasso}]\label{rk:relation_to_integrable_systems}
  Almost Lagrangian bundles are the correct geometric setting for studying the
  type of generalised Liouville integrability that Fass\`o and
  Sansonetto consider in \cite{fasso}. It can be shown that the
  total space of an almost Lagrangian bundle admits an appropriate
  non-degenerate 2-form with respect to which the fibres are maximally
  isotropic submanifolds. This work will appear in \cite{sans_sepe}.
\end{rk}

Not all almost Lagrangian bundles are Lagrangian. Fix an
$n$-dimensional integral affine manifold $\am$
with linear holonomy $\mathfrak{l}$, and let $P_{\am} \subset \cotan B$ be
the period lattice bundle associated to $\am$ (cf. Definition
\ref{defn:integral_affine_periods}). The sheaf of smooth sections
$\mathcal{P}$ of $P_{\am}\to B$ fits in a short exact sequence
\begin{equation*}
  0 \to \mathcal{P} \to \mathcal{Z}(\cotan B) \to \mathcal{Z}(\cotan
  B/P_{\am}) \to 0,
\end{equation*}
\noindent
where $\mathcal{Z}(\cotan B)$ and  $\mathcal{Z}(\cotan B/P_{\am})$ are the
sheaves of closed sections of $\cotan B \to B$ and $\cotan B/P_{\am} \to B$
respectively. The induced long exact sequence in 
cohomology groups yields a homomorphism
$$ \mathcal{D}_{\am} : \Hh^2(B;\mathcal{P}) \to
\Hh^2(B;\mathcal{Z}(\cotan B)), $$
\noindent
where the subscript denotes the dependence upon the integral affine
structure of the manifold $B$. This homomorphism depends on
$\mathcal{A}$ as the latter determines $P_{\am}$ as a Lagrangian submanifold
of $(\cotan B, \Omega)$ (cf. Definition
\ref{defn:integral_affine_periods}). By the Poincar\'e Lemma
(cf. \cite{weinstein}),
$$ \Hh^2(B;\mathcal{Z}(\cotan B)) \cong \Hh^3(B;\R), $$
\noindent
where $\Hh^3(B;\R)$ denotes cohomology with real coefficients, and the
isomorphism between this cohomology theory and the cohomology with
coefficients in the constant sheaf $\mathcal{R}$ over $B$ with $\R$
coefficients is used tacitly. Hence there is a homomorphism
\begin{equation*}
   \mathcal{D}_{\am} : \Hh^2(B;\mathcal{P}) \to
   \Hh^3(B;\R).
\end{equation*}

\begin{thm}[Dazord and Delzant \cite{daz_delz}]\label{thm:daz_delz}
  Let $\alf$ be an almost Lagrangian bundle over $\am$ with linear holonomy
  $\mathfrak{l}$, and let 
  $$c \in \Hh^2(B;\Z^n_{\mathfrak{l}^{-T}}) \cong
  \Hh^2(B;\mathcal{P})$$
  \noindent
  be its Chern class. Then $\alf$ is Lagrangian if and only if  
  \begin{equation*}
    \mathcal{D}_{\am}c = 0.
  \end{equation*}
\end{thm}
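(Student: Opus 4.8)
The plan is to route the argument through the sheaf $\mathcal{Z}(\cotan B/P_{\am})$ of closed sections of the topological reference bundle, which I will identify with the sheaf of locally defined \emph{Lagrangian} sections, and then to exploit the naturality of the connecting homomorphisms attached to the two short exact sequences
\begin{equation*}
0 \to \mathcal{P} \to \mathcal{Z}(\cotan B) \to \mathcal{Z}(\cotan B/P_{\am}) \to 0, \qquad 0 \to \mathcal{P} \to \mathcal{C}^{\infty}(\cotan B) \to \mathcal{C}^{\infty}(\cotan B/P_{\am}) \to 0.
\end{equation*}
Throughout I fix the good cover $\mathcal{U}=\{U_\alpha\}$ and the trivialisations of Remark \ref{rk:local_trivialisations_of_alb}, so that the almost Lagrangian bundle is given by a \v{C}ech $1$-cocycle $\{\varphi_{\beta\alpha}\}$ of the form of equation (\ref{eq:80}), representing a class $\xi \in \Hh^1(B;\mathcal{C}^{\infty}(\cotan B/P_{\am}))$. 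Since $\mathcal{C}^{\infty}(\cotan B)$ is fine, the connecting homomorphism $\partial_{\mathcal{C}}\colon \Hh^1(B;\mathcal{C}^{\infty}(\cotan B/P_{\am})) \to \Hh^2(B;\mathcal{P})$ of the second sequence is an isomorphism sending $\xi$ to the Chern class $c$; this is precisely the isomorphism of Remark \ref{rk:equivalent_definition}.

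\emph{Step 1 (geometric characterisation).} First I would show that the bundle is Lagrangian if and only if $\xi$ admits a representative cocycle valued in the subsheaf $\mathcal{Z}(\cotan B/P_{\am})$. The crux is the elementary computation, carried out directly from equation (\ref{eq:80}), that
\begin{equation*}
\varphi_{\beta\alpha}^*\,\omega_{0,\beta} = \omega_{0,\alpha} - \de\eta_{\beta\alpha}, \qquad \eta_{\beta\alpha} := \sum_{i} g^i_{\beta\alpha}\,\de a^i_\beta,
\end{equation*}
where $\eta_{\beta\alpha}$ is exactly the $1$-form on $U_\alpha\cap U_\beta$ representing the section $\mathbf{g}_{\beta\alpha}$ of $\cotan B/P_{\am}$. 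Hence $\varphi_{\beta\alpha}$ is a fibrewise symplectomorphism precisely when $\eta_{\beta\alpha}$ is closed, i.e. when $\mathbf{g}_{\beta\alpha}$ is a closed (Lagrangian) section. If the bundle is Lagrangian, Theorem \ref{thm:ex_local_action_angle} furnishes symplectic trivialisations whose transition maps then satisfy $\varphi_{\beta\alpha}^*\omega_{0,\beta}=\omega_{0,\alpha}$, so the cocycle is valued in $\mathcal{Z}(\cotan B/P_{\am})$; conversely, if the $\mathbf{g}_{\beta\alpha}$ are chosen closed, the local forms $\Upsilon_\alpha^*\omega_{0,\alpha}$ agree on overlaps and patch to a global symplectic form with Lagrangian fibres, realising the same affine $\rzn$-bundle and hence the same Chern class $c$. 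The only subtlety is Remark \ref{rk:up_to_symplecto}: passing to a cohomologous closed cocycle alters $M$ only up to fibrewise symplectomorphism, so this genuinely detects the Lagrangian property of the original bundle.

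\emph{Step 2 (cohomological comparison).} The two sequences above form a ladder with the identity on $\mathcal{P}$ and the inclusion $j\colon \mathcal{Z}(\cotan B/P_{\am}) \hookrightarrow \mathcal{C}^{\infty}(\cotan B/P_{\am})$ on the right. Naturality of the long exact sequences gives $\partial_{\mathcal{C}} \circ j_* = \partial_{\mathcal{Z}}$, where $\partial_{\mathcal{Z}}\colon \Hh^1(B;\mathcal{Z}(\cotan B/P_{\am})) \to \Hh^2(B;\mathcal{P})$ is the connecting map of the first sequence. By exactness of that sequence, $\mathcal{D}_{\am}(c)=0$ if and only if $c \in \mathrm{image}(\partial_{\mathcal{Z}})$; and since $\partial_{\mathcal{C}}$ is an isomorphism with $\partial_{\mathcal{C}}(\xi)=c$, the identity $\partial_{\mathcal{C}}\circ j_* = \partial_{\mathcal{Z}}$ shows that $c\in \mathrm{image}(\partial_{\mathcal{Z}})$ precisely when $\xi \in \mathrm{image}(j_*)$, i.e. when $\xi$ lifts to $\Hh^1(B;\mathcal{Z}(\cotan B/P_{\am}))$. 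Combining this with Step 1 yields $\mathcal{D}_{\am}(c)=0 \iff \xi \text{ lifts} \iff$ the bundle is Lagrangian.

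The main obstacle is Step 1, where the symplectic geometry enters and where one must verify both that the transition maps of any symplectic trivialisation are closed sections and that a closed cocycle produces a bona fide global symplectic form detecting the original bundle up to the symplectomorphism allowed by Remark \ref{rk:up_to_symplecto}. The bookkeeping of Step 2 is then purely formal, resting only on the fineness of $\mathcal{C}^{\infty}(\cotan B)$ and the naturality of connecting homomorphisms.
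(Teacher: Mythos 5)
Your argument is correct, but it takes a genuinely different route from the one this paper uses. The paper deduces Theorem \ref{thm:daz_delz} from Theorem \ref{thm:realisability}, whose proof is an explicit \v Cech--de Rham computation: the obstruction cocycle $\sum_{i,j} A^{ij}_{\beta\alpha}\,\de a^j_{\alpha}\wedge \de g^i_{\beta\alpha}$ to patching the local forms $\omega_{\alpha}$ is identified with the differential $\detwo(w_0)$ in the Leray--Serre spectral sequence of the bundle and, via Remark \ref{rk:proof_realisability}, with $-\mathcal{D}_{\am}(c)$; moreover the converse direction there corrects the $\omega_{\alpha}$ by pull-backs $\pi^* k_{\alpha}$ of closed $2$-forms on the base, rather than, as you do, replacing the defining cocycle by a cohomologous one with closed translational parts. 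Your proof is instead the classical sheaf-theoretic argument of Dazord and Delzant: after the single symplectic computation $\varphi_{\beta\alpha}^*\omega_{0,\beta}-\omega_{0,\alpha}=-\de\eta_{\beta\alpha}$ (which is correct and is the same identity as equation (\ref{eq:159})), everything reduces to the fineness of $\mathcal{C}^{\infty}(\cotan B)$ and the naturality of the connecting homomorphisms in the ladder of short exact sequences. What your route buys is brevity and formality; what it gives up is precisely what the paper is after, namely the appearance of the class $w_0$ of the symplectic reference form, which is what later gets identified with the radiance obstruction in Theorem \ref{thm:equality_forms}. Two small points of care in your Step 1, neither of which invalidates the argument: the symplectic trivialisations supplied by Theorem \ref{thm:ex_local_action_angle} live a priori on a different good cover and must be compared with those of Remark \ref{rk:local_trivialisations_of_alb} on a common refinement, using that any two systems of trivialisations with the prescribed linear parts differ by a $0$-cochain of translations (so they represent the same class $\xi$); and in the converse direction the phrase ``fibrewise symplectomorphism'' is premature --- the cohomologous closed cocycle defines an \emph{isomorphic} affine $\rzn$-bundle, and one should transport the symplectic form constructed on that model back to $M$ along the bundle isomorphism.
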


The rest of this paper is devoted to proving that the
homomorphism $\mathcal{D}_{\am}$ is related to a differential on the
$\E_2$-page of the Leray-Serre spectral sequence of the topological
universal Lagrangian bundles constructed in Section
\ref{sec:defin-basic-prop}. This observation allows to give a
different proof of Theorem \ref{thm:daz_delz}, which highlights the
importance of integral affine geometry.

\section{A Spectral Sequence}\label{sec:spectral-sequence}
\subsection{A preparatory lemma} \label{sec:preparatory-lemma}
In this section, the universal Lagrangian bundle is
identified as the equivariant equivalent of the universal bundle
for principal $\rzn$-bundles. Free monodromy of an affine
$\rzn$-bundle is the obstruction for it to be a principal
$\rzn$-bundle. Therefore, if $\tau: \brzn \to \baff$ denotes the
bundle induced by the inclusion $\tau: \rzn \hookrightarrow \affz
(\R^n)$, then the following lemma holds.

\begin{lemma}\label{lemma:commutative_diagram}
  The bundle
  \begin{equation}
    \label{eq:3}
    \rzn \hookrightarrow \tau^* \bgl \to \brzn
  \end{equation}
  \noindent
  obtained by pulling back the universal Lagrangian bundle
  along the universal covering map $\tau: \brzn \to \baff$ is a
  universal bundle for principal $\rzn$-bundles.
\end{lemma}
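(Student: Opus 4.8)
The plan is to recognise the pulled-back bundle of equation (\ref{eq:3}) as the universal principal $\rzn$-bundle by verifying the two defining properties: that it is a principal $\rzn$-bundle, and that its total space $\tau^*\bgl$ is weakly contractible. The guiding idea is the one stated just before the lemma, namely that the obstruction for an affine $\rzn$-bundle to be principal is precisely its free monodromy, and that pulling back along the universal covering map $\tau$ of equation (\ref{eq:26}) kills this monodromy.

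First I would record the classifying data. By Remark \ref{rk:monodromy}\ref{item:21}, the universal Lagrangian bundle $\rzn \hookrightarrow \bgl \to \baff$ is classified, as an affine $\rzn$-bundle, by the identity map of $\baff$; consequently its pullback $\tau^*\bgl \to \brzn$ is classified by $\tau \colon \brzn \to \baff$ itself. The key observation is that this map is, by its very construction in equation (\ref{eq:26}), the map $\B\tau$ induced by the group monomorphism $\tau \colon \rzn \hookrightarrow \aff(\rzn)$. Hence the classifying map of $\tau^*\bgl$ lifts tautologically through $\B\tau \colon \brzn \to \baff$, the lift being the identity of $\brzn$. Invoking the reduction-of-structure-group principle already used in Remark \ref{rk:monodromy}\ref{item:35} (Theorem 5.1 in \cite{huse}), the structure group of $\tau^*\bgl$ reduces to the translation subgroup $\tau(\rzn) \cong \rzn$. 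Since $\tau(\rzn)$ acts on the fibre $\rzn \cong \aff(\rzn)/\sigma(\glnz)$ simply transitively by translations, this reduction exhibits $\tau^*\bgl \to \brzn$ as a principal $\rzn$-bundle whose classifying map, \emph{qua} principal bundle, is the identity $\brzn \to \brzn$.

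It then remains to establish universality, i.e.\ weak contractibility of the total space. Since the principal $\rzn$-bundle is classified by a homotopy equivalence (the identity), it is isomorphic to the pullback of the universal principal $\rzn$-bundle along $\mathrm{id}$, hence to that universal bundle itself; in particular $\tau^*\bgl$ is weakly contractible. Should one prefer a direct check, I would run the long exact homotopy sequence of the fibration $\rzn \hookrightarrow \tau^*\bgl \to \brzn$: by Lemma \ref{lemma:htygps}, the fact that $\tau$ is a covering (so an isomorphism on $\pi_i$ for $i \geq 2$), and the existence of the splitting $\sigma$, the connecting homomorphism $\pi_2(\brzn) \to \pi_1(\rzn)$ is identified with that of the universal Lagrangian bundle, which the sequence forces to be an isomorphism $\Z^n \to \Z^n$; a short diagram chase then yields $\pi_i(\tau^*\bgl) = 0$ for every $i$.

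The hard part is the middle step: the reduction of the structure group and the identification of the resulting principal bundle with the \emph{universal} one. Concretely, one must check carefully that $\tau(\rzn)$ acts simply transitively on the fibre, so that the reduced bundle is genuinely a principal $\rzn$-bundle and not merely one whose structure group has become connected, and that the tautological lift of the classifying map really is the identity, since this is exactly what forces the bundle to be universal rather than an arbitrary principal $\rzn$-bundle. Once these points are settled, weak contractibility of the total space is routine given Lemma \ref{lemma:htygps}.
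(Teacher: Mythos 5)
Your proof is correct and follows exactly the line the paper indicates: the paper states this lemma without a written proof, deriving it from the immediately preceding observation that free monodromy is the obstruction for an affine $\rzn$-bundle to be principal, which is precisely your guiding idea (pulling back along the universal cover $\tau$ kills the monodromy, and the tautological lift of the classifying map gives the reduction to the translation subgroup). Both of your verifications of universality --- via the classifying map being the identity of $\brzn$, and via the long exact homotopy sequence together with Lemma \ref{lemma:htygps} --- are sound, so your write-up simply supplies the details the paper leaves implicit.
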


\begin{rk}\label{rk:naturality_Chern_class}
  By functoriality of the Chern class, the obstruction to
  the existence of a section of the bundle of equation (\ref{eq:3})
  is given by
  $$\tau^* c_U \in \mathrm{H}^2(\brzn;\Z^n_{(\mathrm{id}\circ \tau)_*}),$$
  \noindent
  where $c_U$ denotes the universal Chern class. Note that
  $\pi_1(\brzn)$ is trivial, so that the 
  above coefficient system is just the constant $\Z^n$-system of
  coefficients on $\brzn$. Therefore
  \begin{equation}
    \label{eq:82}
    \tau^* c_U = c_{\rzn},
  \end{equation}
  \noindent
  where $c_{\rzn}$ is the obstruction to the existence of a section
  for the bundle 
  $$\rzn \hookrightarrow \mathrm{E} \rzn \to \mathrm{B}\rzn.$$
\end{rk}

\subsection{The spectral sequence of a universal Lagrangian
  bundle} \label{sec:spectr-sequ-univ} 
In this section the methods of \cite{charlap} are adapted to prove
that some differential on the $\mathrm{E}_2$-page of the Leray-Serre
spectral sequence of the universal Lagrangian bundle
is given (up to some isomorphisms) by taking cup products with the
universal Chern class
$$c_U \in
\mathrm{H}^2(\baff;\Z^n_{\text{id}_*}).$$
\noindent
This result is crucial to study the problem of
constructing Lagrangian bundles over a fixed
integral affine manifold.\\

Given a spectral sequence, the main idea in \cite{charlap} is to use
\emph{auxiliary} spectral sequences to reduce the problem of
determining differentials on the $\mathrm{E}_2$-page of the original
spectral sequence to a simpler one. While \cite{charlap} deals with
the cohomology of group extensions and, in particular, with abelian
extensions, the case of the universal Lagrangian bundle and, more
generally, of affine
$\rzn$-bundles can be thought of as a natural generalisation. This is
because the interesting part of the long exact sequence in homotopy
for an affine $\rzn$-bundle  $\alf$ is
$$ 0 \to \pi_2 M \to \pi_2 B \to \pi_1 \rzn \to \pi_1 M \to \pi_1 B
\to 1. $$

\subsubsection*{Auxiliary spectral
  sequences} \label{sec:auxil-spectr-sequ}
The $\mathrm{E}_2$-page of the Leray-Serre spectral
sequence for the universal Lagrangian bundle 
$$\ulf$$ 
\noindent
with $\Z$ coefficients is denoted by
$$ \mathrm{E}^{p,q}_2 \cong
\mathrm{H}^p(\baff;\mathrm{H}^q(\rzn;\Z)_{\rho_q}). $$
\noindent
The homomorphism
$$ \rho_q : \pi_1(\baff) \to \mathrm{Aut}(\mathrm{H}^q(\rzn;\Z)) $$
\noindent
classifies the local coefficient system defined by replacing each fibre $\rzn$
of the universal Lagrangian bundle with its $q$-th
cohomology group with integer coefficients 
$\mathrm{H}^q(\rzn;\Z)$. Henceforth, fix a basepoint in $\baff$, so
that the above homomorphisms are also fixed. Denote by 
$$ \breve{\rho}_1: \pi_1(\baff) \to
\mathrm{Aut}(\mathrm{H}_1(\rzn;\Z)) $$
\noindent
the homomorphism classifying the local coefficient system with fibre
$\Hh_1(\rzn;\Z)$ over $\baff$. Following \cite{charlap},
introduce \emph{auxiliary} spectral sequences, whose $\mathrm{E}_2$-pages are
given by
\begin{equation}
  \label{eq:83}
  \begin{split}
    \bar{\mathrm{E}}^{p,q}_2 &\cong
    \mathrm{H}^p(\baff;\mathrm{H}^q(\rzn;\mathrm{H}^1(\rzn;\Z))_{\bar{\rho}_q}), \\
    \hat{\mathrm{E}}^{p,q}_2 &\cong
    \mathrm{H}^p(\baff;\mathrm{H}^q(\rzn;\mathrm{H}_1(\rzn;\Z))_{\hat{\rho}_q}),
  \end{split}
\end{equation}
\noindent
where the above local coefficient systems are given by
\begin{equation*}
  \begin{split}
    \bar{\rho}_q = \rho_q \otimes \rho_1&: \pi_1(\baff) \to
    \mathrm{Aut}(\Hh^q(\rzn;\Z) \otimes \mathrm{H}^1(\rzn;\Z)), \\
    \hat{\rho}_q = \rho_q \otimes \breve{\rho}_1&:\pi_1(\baff) \to
    \mathrm{Aut}(\Hh^q(\rzn;\Z) \otimes \mathrm{H}_1(\rzn;\Z)), 
  \end{split}
\end{equation*}
\noindent
via the isomorphisms
\begin{equation*}
  \begin{split}
    \mathrm{H}^q(\rzn;\mathrm{H}^1(\rzn;\Z)) &\cong \Hh^q(\rzn;\Z)
    \otimes \mathrm{H}^1(\rzn;\Z), \\
    \mathrm{H}^q(\rzn;\mathrm{H}_1(\rzn;\Z)) &\cong \Hh^q(\rzn;\Z)
    \otimes \mathrm{H}_1(\rzn;\Z),
  \end{split}
\end{equation*}
\noindent
induced by the universal coefficient theorem. These spectral sequences
are henceforth referred to as the Leray-Serre spectral sequences with
$\Hh^1(\rzn;\Z)$ and $\Hh_1(\rzn;\Z)$ coefficients respectively.\\

There is a pairing
\begin{equation}
  \label{eq:84}
  \bar{\mathrm{E}}^{p,q}_2 \otimes_{\Z} \hat{\mathrm{E}}^{p',q'}_2
  \to \mathrm{E}_2^{p+p',q+q'}
\end{equation}
\noindent
induced by taking cup products and by the standard duality
$$ \mathrm{H}^1(\rzn;\Z) \otimes_{\Z} \mathrm{H}_1(\rzn;\Z) \to
\Z, $$
\noindent
called the \emph{auxiliary pairing} associated to the universal
Lagrangian bundle.

If $\de^{(2)},~
\bar{\de}^{(2)},~\hat{\de}^{(2)}$ denote the differentials on the
$\mathrm{E}_2$-page of the spectral sequences
$\mathrm{E}^{*,*}_2,~\bar{\mathrm{E}}^{*,*}_2,~\hat{\mathrm{E}}^{*,*}_2$
respectively, there is the multiplicative formula
\begin{equation}
  \label{eq:85}
  \de^{(2)}(x \cdot y) = \bar{\de}^{(2)}(x) \cdot y + (-1)^{p+q}x
  \cdot \hat{\de}^{(2)}(y),
\end{equation}
\noindent
where $x \in \hat{\mathrm{E}}_2^{p,q},~y \in
\bar{\mathrm{E}}_2^{p',q'}$ and $x \cdot y$ denotes the auxiliary
pairing between $x$ and $y$.\\

As shown in \cite{charlap}, there is an isomorphism
\begin{equation}
  \label{eq:86}
  \theta : \mathrm{E}^{p,1}_2 \to \bar{\mathrm{E}}^{p,0}_2
\end{equation}
\noindent
induced by
$$ \mathrm{H}^1(\rzn;\Z) \to \mathrm{H}^0(\rzn;
\mathrm{H}^1(\rzn;\Z)). $$
\noindent
The identity map
$$ \text{id} :  \mathrm{H}_1(\rzn;\Z) \to  \mathrm{H}_1(\rzn;\Z) $$
\noindent
defines an element $g^1 \in \mathrm{H}^1(\rzn;\mathrm{H}_1(\rzn;\Z))$,
since 
$$ \mathrm{H}^1(\rzn;\mathrm{H}_1(\rzn;\Z)) \cong
\mathrm{Hom}(\mathrm{H}_1(\rzn;\Z);\mathrm{H}_1(\rzn;\Z)).$$ 
\noindent
The element $g^1$, in turn, defines an element
$$ f^1 \in \hat{\mathrm{E}}^{0,1}_2 \cong \mathrm{H}^0(\baff;
\mathrm{H}^1(\rzn;\mathrm{H}_1(\rzn;\Z))_{\hat{\rho}_1}), $$
\noindent
since $g^1$ is fixed by the action of $\pi_1 \baff$ defined by
$\hat{\rho}_1$. The following two propositions are stated below
without proof.

\begin{prop}[Proposition 2.1 \cite{charlap}]\label{prop:charlap_1}
  Let $x \in \mathrm{E}_2^{p,1}$. Then
  \begin{equation*}
    x = \theta(x) \cdot f^1,
  \end{equation*}
  \noindent
  where $\cdot$ denotes the auxiliary pairing defined above.
\end{prop}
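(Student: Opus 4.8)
The plan is to reduce the identity $x = \theta(x)\cdot f^1$ from the level of the three spectral sequences to a purely coefficient-level (i.e. fiberwise) computation, exploiting the fact that $f^1$ sits in base-degree zero. First I would observe that, since $f^1 \in \hat{\mathrm{E}}_2^{0,1} \cong \mathrm{H}^0(\baff;\mathrm{H}^1(\rzn;\mathrm{H}_1(\rzn;\Z))_{\hat\rho_1})$ is represented by a $\pi_1(\baff)$-invariant element of its coefficient module, the auxiliary pairing against $f^1$ is cup product in $\mathrm{H}^*(\baff;-)$ with a degree-zero class. By the standard behaviour of cup products against $\mathrm{H}^0$-classes, the assignment $z \mapsto z\cdot f^1$ is therefore induced by a single homomorphism of coefficient systems
$$ \Phi : \mathrm{H}^0(\rzn;\mathrm{H}^1(\rzn;\Z)) \to \mathrm{H}^1(\rzn;\Z), $$
obtained by cupping along the fiber with $g^1$ and then applying the duality $\mathrm{H}^1(\rzn;\Z)\otimes_{\Z}\mathrm{H}_1(\rzn;\Z)\to\Z$. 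Consequently it suffices to prove the coefficient-level identity $\Phi\circ\theta_0 = \mathrm{id}$, where $\theta_0:\mathrm{H}^1(\rzn;\Z)\to\mathrm{H}^0(\rzn;\mathrm{H}^1(\rzn;\Z))$ is the isomorphism inducing $\theta$; the full statement then follows by applying $\mathrm{H}^p(\baff;-)$ and naturality.

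Next I would make $\theta_0$ and $\Phi$ explicit. Since $\rzn$ is connected, $\mathrm{H}^0(\rzn;\mathrm{H}^1(\rzn;\Z))\cong\mathrm{H}^1(\rzn;\Z)$ canonically, and under this identification $\theta_0$ sends $\xi\in\mathrm{H}^1(\rzn;\Z)$ to $1\otimes\xi$ with $1\in\mathrm{H}^0(\rzn;\Z)$. On the other hand, the universal coefficient isomorphism identifies $g^1$ with the identity endomorphism in $\mathrm{Hom}(\mathrm{H}_1(\rzn;\Z),\mathrm{H}_1(\rzn;\Z))\cong\mathrm{H}^1(\rzn;\Z)\otimes_{\Z}\mathrm{H}_1(\rzn;\Z)$, i.e. with the diagonal element $\sum_i e^i\otimes e_i$ for dual bases $\{e_i\}$ of $\mathrm{H}_1(\rzn;\Z)$ and $\{e^i\}$ of $\mathrm{H}^1(\rzn;\Z)$. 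Because the fiber cup product $\mathrm{H}^0(\rzn;\Z)\otimes\mathrm{H}^1(\rzn;\Z)\to\mathrm{H}^1(\rzn;\Z)$ is multiplication by the unit, one computes
$$ \Phi(\theta_0(\xi)) = \sum_i (1\cup e^i)\,\langle\xi,e_i\rangle = \sum_i \langle\xi,e_i\rangle\, e^i = \xi, $$
so that $g^1$ acts as the canonical reproducing element for the duality and the composite is the identity.

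Finally I would verify the hypotheses needed to transport this coefficient identity back to the spectral-sequence level. The essential point is equivariance: one must confirm that $\theta_0$ and $\Phi$ are $\pi_1(\baff)$-equivariant for the local systems $\rho_1$, $\bar\rho_0$ and $\hat\rho_1$, so that they genuinely induce $\theta$ and $z\mapsto z\cdot f^1$ on $\mathrm{H}^p(\baff;-)$. For $\Phi$ this is exactly where the invariance of $g^1$ enters, since the action $\hat\rho_1 = \rho_1\otimes\breve\rho_1$ is by conjugation on $\mathrm{Hom}(\mathrm{H}_1,\mathrm{H}_1)$ and therefore fixes $\mathrm{id}$; this is precisely what makes $f^1$ a bona fide class in $\mathrm{H}^0(\baff;-)$ rather than merely a fiberwise cochain. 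I expect this equivariance bookkeeping, namely tracking the various actions $\rho_q$, $\bar\rho_q$, $\hat\rho_q$ together with the universal coefficient isomorphisms and checking they all intertwine correctly, to be the main obstacle, since the reproducing-kernel identity itself is elementary once the correct identifications have been fixed.
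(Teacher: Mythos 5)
The paper states Proposition \ref{prop:charlap_1} without proof, deferring to Charlap--Vasquez, so there is no in-paper argument to compare against; your proof is correct and is essentially the standard one for this statement. The key reduction --- that pairing with the base-degree-zero class $f^1$ is induced by the single coefficient homomorphism $a \mapsto \mu(a \otimes g^1)$, where $\mu$ is the fibrewise cup product followed by the duality $\mathrm{H}^1(\rzn;\Z)\otimes_{\Z}\mathrm{H}_1(\rzn;\Z)\to\Z$ --- is valid, the reproducing-kernel computation $\sum_i \langle\xi,e_i\rangle e^i = \xi$ is right, there is no sign to worry about since both classes involved have a zero in their bidegree, and you correctly identify the $\pi_1(\baff)$-equivariance of $\theta_0$ (using that $\bar\rho_0\cong\rho_1$) and of $a\mapsto\mu(a\otimes g^1)$ (using the invariance of $g^1$) as the only hypotheses needing verification. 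The one cosmetic remark is that your symbol $\Phi$ collides with the isomorphism the paper later introduces in Section \ref{sec:relat-lagr-fibr}; this is harmless within your self-contained argument but should be renamed if spliced into the text.
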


\begin{prop}[Proposition 2.2 \cite{charlap}] \label{prop:charlap_2}
  Let $x \in \mathrm{E}_2^{p,1}$. Then
  $$ \de^{(2)} (x) = (-1)^{p+1} \theta(x) \cdot \hat{\de}^{(2)}(f^1). $$
\end{prop}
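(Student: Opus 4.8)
The plan is to deduce Proposition \ref{prop:charlap_2} formally from Proposition \ref{prop:charlap_1} together with the multiplicative formula of equation (\ref{eq:85}); no new geometric input is required, so the entire content is an exercise in exploiting bidegrees and in bookkeeping the signs built into the auxiliary pairing of equation (\ref{eq:84}). First I would invoke Proposition \ref{prop:charlap_1} to write an arbitrary class $x \in \mathrm{E}_2^{p,1}$ as the auxiliary product $x = \theta(x) \cdot f^1$, with $\theta(x) \in \bar{\mathrm{E}}_2^{p,0}$ and $f^1 \in \hat{\mathrm{E}}_2^{0,1}$.

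Applying the differential $\detwo$ to this product and expanding by the Leibniz rule (\ref{eq:85}) produces exactly two summands, one built from $\bar{\de}^{(2)}(\theta(x))$ and one from $\hat{\de}^{(2)}(f^1)$. The second step is to discard the first of these for dimensional reasons: since $\theta(x)$ sits in bidegree $(p,0)$, its differential $\bar{\de}^{(2)}(\theta(x))$ would land in $\bar{\mathrm{E}}_2^{p+2,-1} \cong \Hh^{p+2}\bigl(\baff; \Hh^{-1}(\rzn;\Z) \otimes \Hh^1(\rzn;\Z)\bigr)$, and this group vanishes because $\rzn$ carries no cohomology in negative degree. Hence only the term involving $\hat{\de}^{(2)}(f^1)$ survives, and it remains to identify it with $\theta(x) \cdot \hat{\de}^{(2)}(f^1)$.

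Pinning down the sign of this surviving term is where I expect the only genuine obstacle to lie; the conceptual steps above are immediate. The bidegrees are consistent—$\hat{\de}^{(2)}(f^1) \in \hat{\mathrm{E}}_2^{2,0}$, so $\theta(x) \cdot \hat{\de}^{(2)}(f^1) \in \mathrm{E}_2^{p+2,0}$, matching the target of $\detwo$ on $\mathrm{E}_2^{p,1}$—but the multiplicative formula (\ref{eq:85}) is phrased with its first factor drawn from $\hat{\mathrm{E}}$, whereas Proposition \ref{prop:charlap_1} presents $x$ with the $\bar{\mathrm{E}}$-factor first. To avoid an off-by-one in the exponent I would fix all orientation and commutativity conventions once at the outset, following \cite{charlap}, so that each transposition of the auxiliary pairing contributes a controlled Koszul sign—governed by the graded-commutativity of the cup product on $\baff$ and of the duality $\Hh^1(\rzn;\Z) \otimes_{\Z} \Hh_1(\rzn;\Z) \to \Z$—and then read off the asserted factor $(-1)^{p+1}$ by assembling these transpositions with the sign $(-1)^{p+q}$ appearing in equation (\ref{eq:85}).
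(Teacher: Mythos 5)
The paper itself states this proposition without proof, deferring entirely to \cite{charlap}, so there is no in-paper argument to compare against; your derivation is the standard one and is, in substance, the proof in the cited source. The three structural steps are all correct: Proposition \ref{prop:charlap_1} gives $x = \theta(x) \cdot f^1$ with $\theta(x) \in \bar{\mathrm{E}}_2^{p,0}$ and $f^1 \in \hat{\mathrm{E}}_2^{0,1}$; the Leibniz rule (\ref{eq:85}) produces two terms; and the term involving $\bar{\de}^{(2)}(\theta(x))$ dies because it lands in $\bar{\mathrm{E}}_2^{p+2,-1} \cong \Hh^{p+2}(\baff;\Hh^{-1}(\rzn;\Z)\otimes \Hh^1(\rzn;\Z)) = 0$. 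The bidegree check on the surviving term is also right.

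The one point you do not actually close is the sign, and your proposed mechanism for closing it does not work as described. Taking $\theta(x)$ as the first factor, of total degree $p+0=p$, the rule (\ref{eq:85}) yields $(-1)^{p}\,\theta(x)\cdot\hat{\de}^{(2)}(f^1)$ rather than $(-1)^{p+1}$. You propose to recover the missing $-1$ from Koszul signs incurred by reordering the factors of the auxiliary pairing, but every such transposition here is governed by the fibre degrees of $\theta(x)$ and of $\hat{\de}^{(2)}(f^1)$, both of which are $0$, so those signs are all $+1$ and cannot supply the discrepancy. The extra factor of $-1$ is genuinely a matter of the conventions in \cite{charlap} for the $\mathrm{E}_2$-differential and the product structure --- note that equation (\ref{eq:85}) as printed already interchanges the roles of $\bar{\mathrm{E}}$ and $\hat{\mathrm{E}}$ relative to the pairing (\ref{eq:84}), so its displayed sign cannot be taken at face value --- and it must be extracted from there rather than from graded commutativity. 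As written, your argument proves the formula only up to an overall sign; since the sign is irrelevant to every application in this paper (only the vanishing of $\detwo$ is ever used), this is a minor defect, but it is the one place where the proof is not self-contained.
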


Proposition \ref{prop:charlap_2} reduces the problem of determining
$\de^{(2)}$ to that of determining $\hat{\de}^{(2)}(f^1)$, which
depends on the universal Chern class $c_U$: this 
is the content of Theorem \ref{thm:diff_general}. \\

Before computing
$\hat{\de}^{(2)}(f^1)$, it is useful to compute 
the corresponding differential for the bundle
\begin{equation}
  \label{eq:88}
  \rzn \hookrightarrow \mathrm{E}\rzn \to \mathrm{B}\rzn,
\end{equation}
\noindent
which, in light of Lemma \ref{lemma:commutative_diagram}, is
isomorphic to the pull-back 
of the universal Lagrangian bundle along the universal
covering map $ \tau: \brzn \to \baff$. Let
$$ \hat{\mathrm{E}}^{p,q}_{2,\rzn} \cong
\mathrm{H}^p(\mathrm{B}\rzn;\mathrm{H}^q(\rzn;\mathrm{H}_1(\rzn;\Z))) $$
\noindent
denote the $\mathrm{E}_2$-page of the Leray-Serre spectral sequence for
the bundle (\ref{eq:88}) with $\mathrm{H}_1(\rzn;\Z)$ coefficients, and
let $\hat{\de}^{(2)}_{\rzn}$ denote the corresponding differential. Let
$f^1_{\rzn} \in  \hat{\mathrm{E}}^{0,1}_{2,\rzn}$ be the element
arising from the identity map
$$ \text{id} : \mathrm{H}_1(\rzn;\Z) \to  \mathrm{H}_1(\rzn;\Z) $$
\noindent
as above. In fact, in light of Lemma \ref{lemma:commutative_diagram},
the relation 
between $f^1$ and $f^1_{\rzn}$ is given by
$$ f^1_{\rzn} = \tau^* f^1.$$
\noindent
Let
\begin{equation}
  \label{eq:89}
  \psi_{\rzn}: \mathrm{H}^2(\mathrm{B}\rzn;\mathrm{H}_1(\rzn;\Z)) \to
  \hat{\mathrm{E}}^{2,0}_{2,\rzn} 
\end{equation}
\noindent
be the isomorphism arising via an identification
$$ \mathrm{H}^0(\rzn;\mathrm{H}_1(\rzn;\Z)) \cong
\mathrm{H}_1(\rzn;\Z). $$
\noindent
The following lemma computes the image of $f^1_{\rzn}$ under the
differential $\hat{\de}^{(2)}_{\rzn}$. This is a well-known result
(\textit{e.g.} \cite{geiges}), but a proof is included for
completeness.
\begin{lemma}\label{lemma:main_result_torus}
  $$\hat{\de}^{(2)}_{\rzn} (f^1_{\rzn}) = \psi(c_{\rzn}), $$
  \noindent
  where $c_{\rzn} \in
  \mathrm{H}^2(\mathrm{B}\rzn;\mathrm{H}_1(\rzn;\Z))$ is the Chern class
  of the universal bundle $\rzn \hookrightarrow \mathrm{E}\rzn \to
  \mathrm{B}\rzn$.
\end{lemma}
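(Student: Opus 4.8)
The plan is to recognise $\hat{\de}^{(2)}_{\rzn}$ acting on $\hat{\mathrm{E}}^{0,1}_{2,\rzn}$ as the transgression of the Leray-Serre spectral sequence, and then to compute this transgression on the tautological class $f^1_{\rzn}$ by reducing to the classical circle-bundle computation. First I would note that in bidegree $(0,1)$ the map $\hat{\de}^{(2)}_{\rzn} : \hat{\mathrm{E}}^{0,1}_{2,\rzn} \to \hat{\mathrm{E}}^{2,0}_{2,\rzn}$ is precisely the transgression, so that $\hat{\de}^{(2)}_{\rzn}(f^1_{\rzn})$ is the transgression of the (transgressive) class $f^1_{\rzn}$. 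Since $\brzn$ is simply connected all coefficient systems are untwisted, and the universal coefficient isomorphism $\mathrm{H}^1(\rzn;\mathrm{H}_1(\rzn;\Z)) \cong \mathrm{H}^1(\rzn;\Z) \otimes_{\Z} \mathrm{H}_1(\rzn;\Z)$ lets me write $f^1_{\rzn} = \sum_{i=1}^n \theta^i \otimes e_i$, where $\theta^1,\ldots,\theta^n$ is the standard basis of $\mathrm{H}^1(\rzn;\Z)$ and $e_1,\ldots,e_n$ the dual basis of $\mathrm{H}_1(\rzn;\Z)$; this is exactly the element representing the identity homomorphism.

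Next I would use that, because $\mathrm{H}_1(\rzn;\Z) \cong \Z^n$ is free, the entire spectral sequence with $\mathrm{H}_1(\rzn;\Z)$ coefficients is obtained from the integral one by tensoring with this fixed group, so its $\mathrm{E}_2$-differential is $\hat{\de}^{(2)}_{\rzn} = \mathrm{tr} \otimes \mathrm{id}$, where $\mathrm{tr} : \mathrm{H}^1(\rzn;\Z) \to \mathrm{H}^2(\brzn;\Z)$ denotes the integral transgression. Hence $\hat{\de}^{(2)}_{\rzn}(f^1_{\rzn}) = \sum_i \mathrm{tr}(\theta^i) \otimes e_i$. To evaluate $\mathrm{tr}$ I would exploit that $\mathrm{E}\rzn$ is contractible, so the integral spectral sequence of the bundle (\ref{eq:88}) converges to the cohomology of a point; combined with the product decomposition $\brzn \cong (\mathrm{B}S^1)^n$ and the corresponding splitting of (\ref{eq:88}) into $n$ copies of the universal circle bundle $S^1 \hookrightarrow \mathrm{E}S^1 \to \mathrm{B}S^1$, this forces $\mathrm{tr}$ to be an isomorphism carrying $\theta^i$ to the generator $x^i$ of $\mathrm{H}^2(\brzn;\Z)$ coming from the $i$-th factor.

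The heart of the argument is then to identify $\sum_i x^i \otimes e_i$ with the Chern class $c_{\rzn}$. I would do this one factor at a time: restricting the universal $\rzn$-bundle along the inclusion of the $i$-th circle factor $\mathrm{B}S^1 \hookrightarrow \brzn$ yields the universal circle bundle, whose first Chern class is the positive generator $x^i$; since for a principal circle bundle the Euler class equals the integral transgression $\mathrm{tr}(\theta^i)$ (for instance via the Gysin sequence, whose connecting homomorphism is cup product with the Euler class and coincides with the $\mathrm{E}_2$-differential), this contributes the summand $x^i \otimes e_i$. Reassembling the $n$ factors shows $c_{\rzn} = \sum_i x^i \otimes e_i$ under the identification $\mathrm{H}^2(\brzn;\mathrm{H}_1(\rzn;\Z)) \cong \mathrm{H}^2(\brzn;\Z) \otimes_{\Z} \mathrm{H}_1(\rzn;\Z)$. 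Finally, the isomorphism $\psi$ of (\ref{eq:89}), induced by $\mathrm{H}^0(\rzn;\mathrm{H}_1(\rzn;\Z)) \cong \mathrm{H}_1(\rzn;\Z)$, merely reinterprets $\mathrm{H}^2(\brzn;\mathrm{H}_1(\rzn;\Z))$ as the target $\hat{\mathrm{E}}^{2,0}_{2,\rzn}$ of the transgression, whence $\hat{\de}^{(2)}_{\rzn}(f^1_{\rzn}) = \psi(c_{\rzn})$.

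The step I expect to be the main obstacle is this last identification, namely matching the normalisation and sign of the transgression with the obstruction-theoretic definition of $c_{\rzn}$ as the primary obstruction to a section of (\ref{eq:88}), which lives in $\mathrm{H}^2(\brzn;\pi_1(\rzn)) = \mathrm{H}^2(\brzn;\mathrm{H}_1(\rzn;\Z))$. The classical transgression theorem guarantees that the primary obstruction coincides with the transgression of the fundamental class of the fibre, but making the comparison unambiguous requires tracking cochain-level representatives; the reduction to circle factors is precisely what renders this bookkeeping routine, since for the universal circle bundle the equality $\mathrm{tr}(\theta) = c_1$ is standard and sign-definite.
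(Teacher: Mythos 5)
Your proposal is correct and follows essentially the same route as the paper: both reduce to the universal circle bundle $S^1 \hookrightarrow \E S^1 \to \B S^1$, use contractibility of the total space to force the differential out of bidegree $(0,1)$ to be an isomorphism, and then fix the resulting sign by the standard normalisation $\mathrm{tr}(\theta)=c_1$. Your version merely spells out the reduction to $n=1$ (via the splitting $\brzn \cong (\B S^1)^n$ and naturality) and the sign bookkeeping (via the Gysin sequence and the transgression theorem) in more detail than the paper, which invokes functoriality of the Chern class and the normalisation axiom directly.
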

\begin{proof}
  By functoriality of the Chern class, it suffices to prove the result
  when $n=1$. In this case, the 
  universal bundle is isomorphic (up to homotopy) to
  \begin{equation}
    \label{eq:90}
    S^1 \hookrightarrow S^{\infty} \to \mathbb{C}\mathrm{P}^{\infty}.
  \end{equation}
  \noindent
  Since $S^{\infty}$ is contractible, the differential 
  $$\hat{\de}^{(2)}_{S^1} : \hat{\mathrm{E}}^{0,1}_{2,S^1} \to
  \hat{\mathrm{E}}^{2,0}_{2,S^1} $$
  \noindent
  is an isomorphism. In particular, since $f^1_{S^1} \in
  \hat{\mathrm{E}}^{0,1}_{2,S^1}$ is a generator,
  $\hat{\de}^{(2)}_{S^1} (f^1_{S^1})$ is a generator of
  $\hat{\mathrm{E}}^{2,0}_{2,S^1}$, and thus equal to $\pm
  \psi_{S^1}(c_{S^1})$. In fact, the normalisation axiom for the
  Chern classes implies that 
  $$ \hat{\de}^{(2)}_{S^1} (f^1_{S^1}) =  \psi_{S^1}(c_{S^1}).$$
\end{proof}

Let $\mathrm{E}^{*,*}_{2,\rzn},~\bar{\mathrm{E}}^{*,*}_{2,\rzn}$ denote the
$\mathrm{E}_2$-pages of the Leray-Serre spectral sequences of 
$$\rzn
\hookrightarrow \mathrm{E}\rzn \to \mathrm{B}\rzn$$
\noindent
with $\Z$ and
$\mathrm{H}^1(\rzn;\Z)$ coefficients respectively, and denote by
$\detwo_{\rzn},~\hat{\de}^{(2)}_{\rzn}$ their respective
differentials. Lemma \ref{lemma:main_result_torus} and Proposition
\ref{prop:charlap_2} can be 
combined to prove the following corollary, which is a version of
Theorem \ref{thm:diff_general} for principal $\rzn$-bundles.

\begin{cor} \label{cor:diff_torus}
  Let $x \in \mathrm{E}^{p,1}_{2,\rzn}$. Its image $\detwo_{\rzn}(x) \in
  \E^{p+2,0}_{2,\rzn}$ is given by
  $$ \detwo_{\rzn}(x) = (-1)^{p+1} \theta_{\rzn}(x) \cdot_{\rzn}
  \psi_{\rzn}(c_{\rzn}), $$
  \noindent
  where $\theta_{\rzn},~\cdot_{\rzn}$ are the analogues of the
  isomorphism of equation (\ref{eq:86}) and the auxiliary pairing
  of equation (\ref{eq:84}) for $\rzn \hookrightarrow \E
  \rzn \to \mathrm{B}\rzn$.
\end{cor}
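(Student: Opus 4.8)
The plan is to combine the two facts already assembled: Proposition \ref{prop:charlap_2}, which reduces $\detwo_{\rzn}$ on $\E^{p,1}_{2,\rzn}$ to knowing $\hat{\de}^{(2)}_{\rzn}(f^1_{\rzn})$, and Lemma \ref{lemma:main_result_torus}, which identifies $\hat{\de}^{(2)}_{\rzn}(f^1_{\rzn})$ with $\psi_{\rzn}(c_{\rzn})$. Both of these hold verbatim for the bundle $\rzn \hookrightarrow \E\rzn \to \brzn$ (the latter is stated for exactly this bundle; the former is stated for the universal Lagrangian bundle, but since $\brzn$ is simply connected the coefficient systems are all trivial and the same Charlap-style argument applies unchanged, so I may invoke the ``$\rzn$-decorated'' analogues $\theta_{\rzn}$, $\cdot_{\rzn}$, $f^1_{\rzn}$ as the statement itself already does). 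So the corollary is essentially a direct substitution.

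Concretely, first I would start from Proposition \ref{prop:charlap_2} applied to the bundle $\rzn \hookrightarrow \E\rzn \to \brzn$: for $x \in \E^{p,1}_{2,\rzn}$,
\begin{equation*}
  \detwo_{\rzn}(x) = (-1)^{p+1}\,\theta_{\rzn}(x)\cdot_{\rzn}\hat{\de}^{(2)}_{\rzn}(f^1_{\rzn}).
\end{equation*}
Then I would substitute Lemma \ref{lemma:main_result_torus}, namely $\hat{\de}^{(2)}_{\rzn}(f^1_{\rzn}) = \psi_{\rzn}(c_{\rzn})$, directly into the second factor of the auxiliary pairing. This yields
\begin{equation*}
  \detwo_{\rzn}(x) = (-1)^{p+1}\,\theta_{\rzn}(x)\cdot_{\rzn}\psi_{\rzn}(c_{\rzn}),
\end{equation*}
which is exactly the claimed formula, and the proof is complete.

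The only point requiring a line of justification, rather than pure substitution, is that the bidegrees match so that the auxiliary pairing $\cdot_{\rzn}$ is actually defined on the two factors. Here $\theta_{\rzn}(x) \in \bar{\E}^{p,0}_{2,\rzn}$ by the $\rzn$-analogue of the isomorphism $\theta$ of equation (\ref{eq:86}), and $\psi_{\rzn}(c_{\rzn}) = \hat{\de}^{(2)}_{\rzn}(f^1_{\rzn}) \in \hat{\E}^{2,0}_{2,\rzn}$, so the pairing of equation (\ref{eq:84}) lands in $\E^{p+2,0}_{2,\rzn}$, matching the asserted target of $\detwo_{\rzn}(x)$. I expect no genuine obstacle: the content has already been extracted into the two cited results, and the corollary is a formal consequence. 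The mild care needed is purely bookkeeping of the indices and sign, together with observing that the trivial fundamental group of $\brzn$ lets all the Charlap machinery carry over with untwisted coefficients.
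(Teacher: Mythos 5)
Your proof is correct and follows exactly the route the paper intends: the paper itself introduces Corollary \ref{cor:diff_torus} with the remark that ``Lemma \ref{lemma:main_result_torus} and Proposition \ref{prop:charlap_2} can be combined'' to prove it, which is precisely your substitution of $\hat{\de}^{(2)}_{\rzn}(f^1_{\rzn}) = \psi_{\rzn}(c_{\rzn})$ into the Charlap formula applied to $\rzn \hookrightarrow \E\rzn \to \brzn$. Your additional check that the bidegrees line up and that the simple connectivity of $\brzn$ trivialises the coefficient systems is sound bookkeeping that the paper leaves implicit.
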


\subsubsection*{The differential $\detwo$}\label{sec:differential-detwo}
The following theorem is one of the main results of this paper; it
states that, up to isomorphisms, the differential
$$ \detwo : \E^{p,1}_2 \to \E^{p+2,0}_2 $$
\noindent
is given by taking the cup product with the universal
Chern class $c_U$.
 
\begin{thm}\label{thm:diff_general}
  Let $x \in \E^{p,1}_2$. Its image $\detwo (x) \in \E^{p+2,0}_2 \cong
  \mathrm{H}^{p+2}(\baff;\Z)$ is given by
  \begin{equation}
    \label{eq:91}
    \detwo (x) = (-1)^{p+1}\theta(x) \cdot \psi(c_U),
  \end{equation}
  \noindent
  where $\psi : \mathrm{H}^2(\baff;\Z^n_{\mathrm{id}_*}) \to
  \hat{\E}^{2,0}_2$ is the isomorphism induced by the identification
  $$ \mathrm{H}^0(\rzn;\mathrm{H}_1(\rzn;\Z)) \cong \Z^n. $$
\end{thm}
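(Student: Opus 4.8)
The plan is to obtain the theorem directly from Proposition \ref{prop:charlap_2}. That proposition already gives $\detwo(x) = (-1)^{p+1}\theta(x)\cdot\hat{\de}^{(2)}(f^1)$ for $x \in \E^{p,1}_2$, so comparing with \eqref{eq:91} the entire statement collapses to the single identity
\[
  \hat{\de}^{(2)}(f^1) = \psi(c_U) \in \hat{\E}^{2,0}_2,
\]
i.e.\ to showing that the transgression of the tautological element $f^1$, built from $g^1 = \mathrm{id} \in \Hh^1(\rzn;\Hh_1(\rzn;\Z))$, is carried to the universal Chern class by $\psi$. This is the twisted counterpart of Lemma \ref{lemma:main_result_torus}, and it is where all of the work lies.

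The first step is naturality along the universal covering map $\tau \colon \brzn \to \baff$. By Lemma \ref{lemma:commutative_diagram} the pullback of the universal Lagrangian bundle along $\tau$ is the universal principal $\rzn$-bundle \eqref{eq:88}, so $\tau$ induces a morphism of Leray--Serre spectral sequences commuting with all differentials. Combining $\tau^* f^1 = f^1_{\rzn}$, the naturality of the isomorphism $\psi$, Lemma \ref{lemma:main_result_torus}, and the identity $\tau^* c_U = c_{\rzn}$ of \eqref{eq:82}, I would obtain
\[
  \tau^*\!\bigl(\hat{\de}^{(2)}(f^1)\bigr) = \hat{\de}^{(2)}_{\rzn}(f^1_{\rzn}) = \psi_{\rzn}(c_{\rzn}) = \tau^*\!\bigl(\psi(c_U)\bigr).
\]
Hence $\hat{\de}^{(2)}(f^1)$ and $\psi(c_U)$ already agree after restriction to the universal cover.

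The hard part is that this is not yet enough, because $\tau^*$ is not injective on $\hat{\E}^{2,0}_2 \cong \Hh^2(\baff;\Hh_1(\rzn;\Z)_{\hat{\rho}_0})$. Since $\brzn \simeq \B\rzn$ is simply connected and is the universal cover of $\baff$ with deck group $\glnz$, the Cartan--Leray spectral sequence of the cover degenerates enough to identify the kernel of $\tau^*$ in degree two with $\Hh^2(\glnz;\Hh_1(\rzn;\Z))$, which is in general nonzero; this is exactly why the twisted statement is a genuine (and not merely formal) generalisation of Lemma \ref{lemma:main_result_torus}, whose proof reduced to $n=1$ using the absence of any such monodromy. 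To kill this ambiguity I would follow \cite{charlap} and compute the transgression $\hat{\de}^{(2)}(f^1)$ \emph{equivariantly}, on the level of the $\Z[\glnz]$-module cochain complex of the $\glnz$-CW complex $\widetilde{\baff} = \brzn$. The point is that the transgression of $f^1$ is represented by the obstruction to extending, over the $2$-skeleton, the fibrewise $1$-cochain determined by $g^1 = \mathrm{id}$; this is precisely the equivariant obstruction cocycle in $\mathrm{Hom}_{\Z[\varpi]}(\mathrm{C}_2(\widetilde{\baff});\Z^n)$ that defines $c_U$ in Definition \ref{defn:univ_cc}, where $\varpi = \pi_1(\baff) = \glnz$. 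Because both classes are thereby represented by the same $\glnz$-equivariant cocycle, they coincide in $\Hh^2(\baff;\Hh_1(\rzn;\Z)_{\hat{\rho}_0})$ on the nose, not merely in the image of $\tau^*$, with Lemma \ref{lemma:main_result_torus} fixing the normalisation (the sign and the vanishing of any residual $\Hh^2(\glnz;-)$ term). Establishing this equivariant identification of the transgression with the section obstruction is the main obstacle and the genuinely new ingredient beyond the simply connected computation.
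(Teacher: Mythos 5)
Your reduction is the same as the paper's: Proposition \ref{prop:charlap_2} collapses the theorem to the single identity $\hat{\de}^{(2)}(f^1) = \psi(c_U)$, and naturality along $\tau$ together with Lemma \ref{lemma:main_result_torus} and equation (\ref{eq:82}) shows that $\mu := \hat{\de}^{(2)}(f^1) - \psi(c_U)$ lies in $\ker\tau^*$, which you correctly identify (via Cartan--Leray, using that $\brzn$ is simply connected) with $\Hh^2(\glnz;\Z^n)$. The gap is in how you kill $\mu$. Your proposed resolution is that the transgression of $f^1$, computed on the equivariant cochain complex of $\widetilde{\baff}$, ``is precisely the equivariant obstruction cocycle that defines $c_U$'' --- but that identification of the spectral-sequence differential with the obstruction cocycle of Definition \ref{defn:univ_cc} \emph{is} the statement being proven; asserting that the two classes are represented by the same cocycle, without exhibiting the comparison between the skeletal-filtration construction of $\hat{\de}^{(2)}$ and the section-extension obstruction, is circular at exactly the crucial point. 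Moreover, your fallback --- that Lemma \ref{lemma:main_result_torus} fixes ``the vanishing of any residual $\Hh^2(\glnz;-)$ term'' --- cannot work: that lemma only controls the image under $\tau^*$, and by your own analysis $\tau^*$ annihilates precisely the subgroup $\Hh^2(\glnz;\Z^n)$ in which the residual ambiguity lives, so it constrains nothing further.

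The paper closes this gap by a structural argument that avoids any cochain-level identification. Using Siegel's twisted Leray--Serre spectral sequence of the fibration $\brzn \hookrightarrow \baff \xrightarrow{\mathrm{Lin}} \bgl$, it shows that $\ker\tau^* = \mathrm{im}\,\mathrm{Lin}^*$ in degree two, so $\psi^{-1}(\mu) = \mathrm{Lin}^*\nu$ for some $\nu \in \Hh^2(\bgl;\Z^n_{\mathrm{id}^G_*})$. It then pulls everything back along the splitting $\sigma : \bgl \to \baff$: the bundle $\sigma^*\bgl \to \bgl$ admits a tautological section, so both $\sigma^*c_U = 0$ and $\sigma^*\hat{\de}^{(2)}(f^1) = 0$ (the latter because the differential $\hat{\de}^{(2)}_G$ vanishes for a bundle with a section), whence $\sigma^*\mu = 0$. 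Since $\sigma^*\circ\mathrm{Lin}^*$ is the identity on $\Hh^2(\bgl;\Z^n_{\mathrm{id}^G_*})$, this forces $\nu = 0$ and hence $\mu = 0$. If you want to salvage your route, you would need to actually carry out the equivariant comparison of the transgression with the obstruction cocycle (in the spirit of the identification of the primary obstruction with the transgression of the fundamental class of the fibre), at which point Lemma \ref{lemma:main_result_torus} would indeed only be needed for the sign; as written, that step is missing.
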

\begin{proof}
  In light of Proposition \ref{prop:charlap_2}, it suffices to show that
  \begin{equation}
    \label{eq:92}
    \hat{\de}^{(2)}(f^1) = \psi (c_U),
  \end{equation}
  \noindent
  which is just the equivariant version of the result of Lemma
  \ref{lemma:main_result_torus}. The idea of the proof is to use Lemma
  \ref{lemma:main_result_torus} and functoriality of the Leray-Serre
  spectral sequence to deduce the result. \\
  
  By Lemma \ref{lemma:commutative_diagram}, the pull-back bundle
  $$ \rzn \hookrightarrow \tau^*\bgl \to \brzn $$
  \noindent
  is a universal bundle $\rzn
  \hookrightarrow \E\rzn \to \B\rzn$. In particular, there is a
  commutative diagram
  \begin{equation}
    \label{eq:93}
    \xymatrix{\hat{\E}^{0,1}_2 \ar[r]^-{\tau^*}
      \ar[d]_-{\hat{\de}^{(2)}} & \hat{\E}^{0,1}_{2,\rzn}
      \ar[d]^-{\hat{\de}^{(2)}_{\rzn}} \\
      \hat{\E}^{2,0}_2 \ar[r]_-{\tau^*} & \hat{\E}^{2,0}_{2,\rzn}}
  \end{equation}
  \noindent
  arising from functoriality of the Leray-Serre spectral sequence (cf.
  Section 6 in \cite{mccleary}). By Lemma \ref{lemma:main_result_torus},
  \begin{equation}
    \label{eq:94}
    \hat{\de}^{(2)}_{\rzn} \circ \tau^* (f^1) = \psi_{\rzn}(c_{\rzn}),
  \end{equation}
  \noindent
  since $f^1_{\rzn} = \tau^* f^1$. Equation (\ref{eq:94}) and the
  commutativity of the diagram in equation (\ref{eq:93}) imply that
  \begin{equation}
    \label{eq:95}
    \tau^*\circ \hat{\de}^{(2)} (f^1) = \psi_{\rzn}(c_{\rzn}).
  \end{equation}
  \noindent
  Note that, by definition, the isomorphism $\psi$ is the
  \emph{equivariant} version of $\psi_{\rzn}$, \textit{i.e.} there is a
  commutative diagram
  \begin{equation}
    \label{eq:96}
    \xymatrix{\mathrm{H}^2(\baff;\Z^n_{\text{id}_*}) \ar[r]^-{\psi}
      \ar[d]_-{\tau^*} & \hat{\E}^{2,0}_2 \ar[d]^-{\tau^*} \\
      \mathrm{H}^2(\brzn;\Z^n) \ar[r]_-{\psi_{\rzn}} & \hat{\E}^{2,0}_{2,\rzn}.}
  \end{equation}
  \noindent
  By equation (\ref{eq:82}), the commutative diagram (\ref{eq:96})
  implies that
  \begin{equation}
    \label{eq:97}
    \tau^* \circ \psi(c_U) = \psi_{\rzn}(c_{\rzn}).
  \end{equation}
  \noindent
  In particular, combining equations (\ref{eq:95}) and (\ref{eq:97}),
  obtain that
  \begin{equation}
    \label{eq:98}
    \hat{\de}^{(2)} (f^1) - \psi(c_U) = \mu \in \ker \tau^*.
  \end{equation}
  \noindent
  It therefore remains to show that
  $\mu = 0$. \\

  This is achieved in the following two steps.
  \begin{enumerate}[label= \textbf{Step \arabic*}, ref=Step \arabic*]
  \item \label{item:38} Prove that $\psi^{-1}(\mu)$ lies in the image of the
    homomorphism 
    $$\mathrm{Lin}^* : \Hh^2(\bgl;\Z^n_{\mathrm{id}^G_*}) \to \Hh^2(\baff;
    \Z^n_{\mathrm{id}_*}) $$
    \noindent
    induced by the fibration
    $$ \mathrm{Lin} : \baff \to \bgl; $$
  \item \label{item:39} Prove that $\psi^{-1}(\mu)$ lies in the kernel of the
    homomorphism
    $$ \sigma^* : \Hh^2(\baff;\Z^n_{\mathrm{id}_*}) \to
    \Hh^2(\bgl;\Z^n_{\mathrm{id}^G_*}) $$
    \noindent
    induced by the universal Lagrangian bundle
    $$ \univlf. $$
  \end{enumerate}
  \subsubsection*{\ref{item:38}} Recall that there is a fibration
  \begin{equation*}
    \xymatrix@1{\brzn \,\ar@{^{(}->}[r]^-{\tau} & \baff
      \ar[r]^-{\mathrm{Lin}} & \bgl} 
  \end{equation*}
  \noindent
  arising from the group $\affz(\rzn) = \glnz \ltimes \rzn$
  (cf. Section 5 in \cite{huse}), and
  consider the bundle
  \begin{equation}
    \label{eq:100}
    \rzn \hookrightarrow \sigma^* \bgl \to \bgl
  \end{equation}
  \noindent
  obtained by pulling back the universal Lagrangian
  bundle along the map $\sigma : \bgl \to \baff$ induced by the
  splitting $\sigma : \glnz \to \affz(\rzn)$. There is an associated
  system of local coefficients 
  \begin{equation}
    \label{eq:101}
    \Hh^1(\rzn;\Z) \hookrightarrow \sigma^* P_n \to \bgl,
  \end{equation}
  \noindent
  which is just the pull-back of the universal period lattice bundle
  $P_n \to \baff$ along $\sigma$. The pull-back
  $$ \Hh^1(\rzn;\Z) \hookrightarrow \mathrm{Lin}^* \sigma^* P_n \to
  \baff $$
  \noindent
  is classified by the (conjugacy class of the) homomorphism
  \begin{equation*}
    \sigma_* \circ \mathrm{Lin}_* : \pi_1(\baff) \to \pi_1(\baff),
  \end{equation*}
  \noindent
  which is the identity, as the composition of homomorphisms
  $$ \sigma \circ \mathrm{Lin} : \aff (\rzn) \to \aff(\rzn) $$
  \noindent
  preserves path-connected components.  Hence, the following cohomology
  rings are isomorphic
  \begin{equation}
    \label{eq:155}
    \Hh^*(\baff;\Z^n_{\mathrm{id}_*}) \cong \Hh^*(\baff;
    \Hh_1(\rzn;\Z)_{(\sigma \circ \mathrm{Lin})_*}).
  \end{equation}
  \noindent
  There is a
  twisted version of the Leray-Serre spectral sequence, constructed by
  Siegel in \cite{siegel}, which allows to calculate the cohomology
  ring 
  $$\Hh^*(\baff;\Hh_1(\rzn;\Z)_{(\sigma \circ \mathrm{Lin})_*})$$
  \noindent
  using the fibration
  $$ \xymatrix@1{\brzn \,\ar@{^{(}->}[r]^-{\tau} & \baff
    \ar[r]^-{\mathrm{Lin}} & \bgl} $$
  \noindent
  and the system of local coefficients on
  $\bgl$ of equation (\ref{eq:101}). Let $\breve{\E}^{p,q}_2$ denote
  the $\E_2$-page of this spectral sequence; since $\brzn$ is simply
  connected, there is an exact sequence
  \begin{equation*}
    \xymatrix@1{ 0 \ar[r]& \breve{\E}_2^{2,0}
      \ar[r]^-{\mathrm{Lin}^*} &
      \mathrm{H}^2(\baff;\Z^n_{\text{id}_*}) \ar[r]^-{\tau^*} &
      \breve{\E}^{0,2}_2,}
  \end{equation*}
  \noindent
  where the isomorphism of equation (\ref{eq:155}) is used
  tacitly. There are isomorphisms 
  \begin{equation*}
    \begin{split}
      \breve{\E}^{2,0}_2 & \cong \Hh^2(\bgl;\Z^n_{\text{id}^G_*}), \\
      \breve{\E}^{0,2}_2 & \cong [\Hh^2(\brzn;\Z^n)]^G \subset \Hh^2(\brzn;\Z^n), 
    \end{split}
  \end{equation*}
  \noindent
  where $\text{id}^G_*: \pi_1 (\bgl) \cong \glnz \to
  \mathrm{Aut}(\Z^n)\cong \glnz$ is the identity and
  $[\,.\,]^G$ denotes the group of $\glnz$-invariant elements. Since $\ker
  \tau^* = \mathrm{im}\,\mathrm{Lin}^*$, it follows that $ \psi^{-1} (\mu) \in
  \mathrm{im}\,p^*$. This completes the proof of \ref{item:38}.

  \subsubsection*{\ref{item:39}} Consider the bundle of equation
  (\ref{eq:100}). This 
  bundle admits a section, which is induced by the identity
  map $\bgl \to \bgl$. Thus
  \begin{equation}
    \label{eq:102}
    \sigma^*(c_U) = 0 \in \Hh^2(\bgl;\Z^n_{\text{id}^G_*}).
  \end{equation}
  \noindent
  Moreover, if $\hat{\E}^{*,*}_{2,G}$ denotes the Leray-Serre spectral
  sequence for 
  $$\rzn \hookrightarrow \sigma^* \bgl \to \bgl$$ 
  \noindent
  with
  $\Hh_1(\rzn;\Z)$ coefficients, the differential
  $$ \hat{\de}_G^{(2)} : \hat{\E}^{0,1}_{2,G} \to
  \hat{\E}^{2,0}_{2,G} $$
  \noindent
  vanishes identically, as the bundle admits a section. Thus there is the
  following commutative diagram arising from functoriality of spectral
  sequences
  $$ \xymatrix{\hat{\E}^{0,1}_2 \ar[r]^-{\sigma^*}
    \ar[d]_-{\hat{\de}^{(2)}} & \hat{\E}^{0,1}_{2,G}
    \ar[d]^-{\hat{\de}^{(2)}_G \equiv 0} \\
    \hat{\E}^{2,0}_2 \ar[r]_-{\sigma^*} & \hat{\E}^{2,0}_{2,G},} $$
  \noindent
  which implies that
  \begin{equation}
    \label{eq:103}
    \sigma^* \hat{\de}^{(2)}(f^1) = 0.
  \end{equation}
  \noindent
  If $\psi_G : \Hh^2(\bgl; \Z^n_{\text{id}^G_*}) \to
  \hat{\E}^{2,0}_{2,G}$ is the isomorphism arising from the
  identification
  $$ \Hh^0(\rzn;\Hh_1(\rzn;\Z)) \cong \Z^n, $$
  \noindent
  there is a commutative diagram (cf. equation (\ref{eq:96}))
  \begin{equation}
    \label{eq:57}
    \xymatrix{\Hh^2(\baff;\Z^n_{\text{id}_*}) \ar[r]^-{\psi}
      \ar[d]_-{\sigma^*} & \hat{\E}^{2,0}_2 \ar[d]^-{\sigma^*} \\
      \Hh^2(\bgl;\Z^n_{\text{id}^G_*}) \ar[r]_-{\psi_G}
      & \hat{\E}^{2,0}_{2,G}.}
  \end{equation}
  \noindent
  Equation (\ref{eq:102}) and the diagram (\ref{eq:57}) imply that
  \begin{equation}
    \label{eq:104}
    \sigma^* \psi^* (c_U) = 0.
  \end{equation}
  \noindent
  Applying $\sigma^*$ to both sides of equation (\ref{eq:98}), and
  using equations (\ref{eq:103}) and (\ref{eq:104}), obtain that
  $$ \sigma^* \mu =0,$$
  \noindent
  which proves \ref{item:39}. \\

  By \ref{item:38}, there exists 
  $$\nu
  \in \Hh^2(\bgl;\Z^n_{\text{id}^G_*})$$
  such that $\psi^{-1} (\mu) = \mathrm{Lin}^* \nu$. , then
  \begin{equation}
    \label{eq:64}
    \sigma^* \circ \psi \circ \mathrm{Lin}^* (\nu) = \sigma^* \mu = 0,
  \end{equation}
  \noindent
  since $\mu \in \ker \sigma^*$ by \ref{item:39}. Commutativity of
  the diagram in equation (\ref{eq:57}) implies that 
  $$ \sigma^* \circ \psi \circ \mathrm{Lin}^* (\nu) = \psi_G \circ \sigma^*
  \circ \mathrm{Lin}^* (\nu). $$
  Since $\psi_G$ is an isomorphism, equation (\ref{eq:64}) implies that 
  $$ \sigma^* \circ \mathrm{Lin}^* (\nu) = 0; $$
  \noindent
  as $\sigma^* \circ \mathrm{Lin}^*$ is the identity on
  $\Hh^2(\bgl;\Z^n_{\text{id}^G_*})$, it follows that $\nu = 0$. Therefore,
  $$\mu =0$$
  \noindent
  as required.
\end{proof}

\subsection{Relation to almost Lagrangian
  bundles} \label{sec:relat-almost-lagr}
Throughout this section, fix an integral affine manifold
$\am$ with linear holonomy 
$\mathfrak{l}$ and an almost Lagrangian bundle $\alf$ with Chern class
$$c \in \Hh^2(B;\Z^n_{\mathfrak{l}^{-T}}).$$
\noindent
The aim of this section is to use Theorem \ref{thm:diff_general} to
compute the obstruction for the above bundle to be Lagrangian; Theorem
\ref{thm:realisability} proves that this obstruction is given by the
cohomology class of the
cup product of the Chern class $c$ with the cohomology class
of the symplectic form on the total space of the symplectic reference bundle
associated to $\am$ (cf. Lemma \ref{lemma:coho_class_omega_0}). 

\subsubsection*{Symplectic reference
  bundles} \label{sec:sympl-refer-lagr} 
The symplectic form $\omega_0$ on the total space of the
symplectic reference bundle 
$$ \rlf $$
\noindent
defines a cohomological invariant of $\am$.

\begin{lemma}\label{lemma:coho_class_omega_0}
  The 2-form $\omega_0$ defines a cohomology class
  $$ w_0 \in \Hh^1(B;\Hh^1(\rzn;\R)_{\mathfrak{l}}), $$
  \noindent
  where $\mathfrak{l}: \pi_1(B) \to \glnz \subset \mathrm{GL}(n;\R)$.
\end{lemma}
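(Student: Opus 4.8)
The plan is to realise $w_0$ as an explicit \v{C}ech cocycle valued in the locally constant sheaf $\mathcal{H}^1$ associated to $\Hh^1(\rzn;\R)_{\mathfrak{l}}$, extracted from the fibrewise de Rham class of local primitives of $\omega_0$. First I would fix a good cover $\mathcal{U} = \{U_\alpha\}$ of $B$ by integral affine charts carrying action-angle coordinates $(\mathbf{a}_\alpha,\boldsymbol{\theta}_\alpha)$, so that $\omega_0|_{\pi^{-1}(U_\alpha)} = \sum_i \de a^i_\alpha \wedge \de\theta^i_\alpha$. Since the fibres are Lagrangian, $\omega_0$ restricts to $0$ on every fibre; as $\pi^{-1}(U_\alpha)\simeq \rzn$ and the restriction $\Hh^2(\pi^{-1}(U_\alpha);\R)\to \Hh^2(\rzn;\R)$ is an isomorphism, the class $[\omega_0]$ vanishes over $U_\alpha$ and $\omega_0$ admits a primitive there. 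The explicit choice $\lambda_\alpha := \sum_i a^i_\alpha\,\de\theta^i_\alpha$ satisfies $\de\lambda_\alpha = \omega_0|_{\pi^{-1}(U_\alpha)}$ and restricts on each fibre to a translation-invariant closed $1$-form.

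On a double overlap, $\lambda_\beta - \lambda_\alpha$ is closed (both are primitives of the global form $\omega_0$), and I would set $w_{0,\beta\alpha} \in \mathcal{H}^1(U_{\alpha\beta})$ to be its fibrewise de Rham class. For the symplectic reference bundle, whose distinguished zero section is Lagrangian and is preserved by all transitions, one has $\mathbf{g}_{\beta\alpha}\equiv 0$ in equation (\ref{eq:41}), i.e. $\boldsymbol{\theta}_\beta = A^{-T}_{\beta\alpha}\boldsymbol{\theta}_\alpha$. A direct computation then gives $\lambda_\beta = \lambda_\alpha + \langle A^{-1}_{\beta\alpha}\mathbf{d}_{\beta\alpha},\,\de\boldsymbol{\theta}_\alpha\rangle$, so that $w_{0,\beta\alpha}$ is represented by the translation-invariant fibre $1$-form with constant coefficients $A^{-1}_{\beta\alpha}\mathbf{d}_{\beta\alpha}$ in the flat frame $\{[\de\theta^j_\alpha]\}$; in particular it is a flat section of $\mathcal{H}^1$ over $U_{\alpha\beta}$. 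The telescoping identity $(\lambda_\gamma-\lambda_\beta)+(\lambda_\beta-\lambda_\alpha)=\lambda_\gamma-\lambda_\alpha$ on triple overlaps yields the cocycle condition $w_{0,\gamma\beta}+w_{0,\beta\alpha}=w_{0,\gamma\alpha}$, whence $w_0 := [\{w_{0,\beta\alpha}\}]\in \check{\Hh}^1(\mathcal{U};\mathcal{H}^1)\cong \Hh^1(B;\Hh^1(\rzn;\R)_{\mathfrak{l}})$. Well-definedness is routine: two primitives of $\omega_0$ over $\pi^{-1}(U_\alpha)$ differ by a closed form, whose fibre class is a flat section $m_\alpha$, so a change of primitives alters the cochain by the coboundary $\delta(m)$, and passage to refinements handles independence of $\mathcal{U}$.

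The step demanding genuine care — and the main conceptual point of the lemma — is the identification of the coefficient system as $\Hh^1(\rzn;\R)_{\mathfrak{l}}$ rather than $\Hh^1(\rzn;\R)_{\mathfrak{l}^{-T}}=\Hh^1(\rzn;\R)_{\chi_*}$. The frame relation $\de\boldsymbol{\theta}_\beta = A^{-T}_{\beta\alpha}\de\boldsymbol{\theta}_\alpha$ is tempting but concerns the \emph{basis}; what classifies the local system is the induced action on the \emph{components} of a cohomology class. Concretely, the period lattice bundle has fibre $\Hh_1(\rzn;\Z)$ with monodromy $\chi_* = \mathfrak{l}^{-T}$ (cf. Remark \ref{rk:local_trivialisations_of_alb}), so $\Hh^1(\rzn;\R)=\mathrm{Hom}(\Hh_1(\rzn;\Z),\R)$ carries the contragredient representation $(\mathfrak{l}^{-T})^{-T}=\mathfrak{l}$; equivalently, in the flat frame $\{[\de\theta^j_\alpha]\}$ the components of an $\Hh^1$-class transform by $A_{\beta\alpha}=\mathfrak{l}$. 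I would make this duality explicit and check that $\{w_{0,\beta\alpha}\}$ respects it. I expect no further difficulty: the construction is elementary once the local system is correctly pinned down, and the explicit formula $w_{0,\beta\alpha}\leftrightarrow A^{-1}_{\beta\alpha}\mathbf{d}_{\beta\alpha}$ already anticipates the comparison with the radiance obstruction in Theorem \ref{thm:equality_forms}.
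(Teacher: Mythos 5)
Your proposal is correct and follows essentially the same route as the paper's proof: both take the local potentials $\nu_\alpha=\sum_i a^i_\alpha\,\de\theta^i_\alpha$ of $\omega_0$ in action-angle coordinates, form the \v Cech cocycle of their differences on overlaps (which, since $\mathbf{g}_{\beta\alpha}\equiv 0$ for the symplectic reference bundle, equals $\sum_i d^i_{\beta\alpha}\,\de\theta^i_\beta$), and identify the resulting class in $\Hh^1(B;\Hh^1(\rzn;\R)_{\mathfrak{l}})$, the coefficient system being the contragredient of the period lattice monodromy $\mathfrak{l}^{-T}$. Your extra paragraph pinning down why the local system is $\mathfrak{l}$ rather than $\mathfrak{l}^{-T}$ only spells out in more detail what the paper disposes of in its final sentence, so there is no substantive difference.
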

\begin{proof}
The cohomology theory used throughout this proof is \v Cech-de
Rham. The cohomology class of a closed differential
form can be represented as the obstruction to finding a globally
defined potential. Let $\mathcal{U}= \{U_{\alpha}\}$ be a good open
cover by integral affine coordinate neighbourhoods of $\am$. There exist local
action-angle coordinates
$(\mathbf{a}_{\alpha},\boldsymbol{\theta}_{\alpha})$ on
$\pi^{-1}(U_{\alpha}) \cong U_{\alpha} \times \rzn$, so that
\begin{equation}
  \label{eq:106}
   \omega_{\alpha} = \omega_{0}|_{\pi^{-1}(U_{\alpha})} = 
   \sum\limits_{i} \de a^i_{\alpha} \wedge \de \theta^i_{\alpha} = \de
   \Bigg(\sum\limits_{i} a^i_{\alpha} \de \theta^i_{\alpha}\Bigg),
\end{equation}
\noindent
The transition
functions $\varphi_{\beta \alpha}$ for this choice of local
trivialisations of $\cotan B/P_{\am} 
\to B$ are given by 
\begin{equation}
  \label{eq:74}
  \varphi_{\beta \alpha} (\mathbf{a}_{\alpha},\boldsymbol{\theta}_{\alpha}) =
  (A_{\beta \alpha} \mathbf{a}_{\alpha} + \mathbf{d}_{\beta \alpha},
  A_{\beta \alpha}^{-T} \boldsymbol{\theta}_{\alpha}),
\end{equation}
\noindent
where $A_{\beta \alpha} \in \glnz$ and $\mathbf{d}_{\beta \alpha} \in
\R^n$ is constant. For each $\alpha$, set
$$ \nu_{\alpha} = \sum\limits_{i} a^i_{\alpha} \de
\theta^i_{\alpha}.$$
\noindent
The cohomology class of $\omega_0$ (as a differential form on $\cotan
B/P_{\am}$) is given in \v Cech cohomology by
the cocycle
$$\tau_{\beta \alpha} = \varphi^*_{\beta \alpha} \nu_{\beta} -
\nu_{\alpha} = \sum\limits_{i,k} d^i_{\beta \alpha} \de \theta^i_{\beta}.$$
\noindent
Since $\mathcal{U}$ is a good cover for $B$, $\tau=\{\tau_{\beta
  \alpha}\}$ defines a one 
dimensional cohomology class $w_0$ on $B$ with coefficients in the
local coefficient system
$$ \Hh^1(\rzn;\R) \hookrightarrow P^* \to B,$$
\noindent
whose monodromy is given by $\mathfrak{l}: \pi_1(B) \to \glnz \subset
\mathrm{GL}(n;\R)$, since this equals the inverse transposed of the
monodromy of the period lattice bundle $P_{\am}$. This proves the result.
\end{proof}

\begin{rk}\label{rk:on_coho_class}
  If $\omega'$ is any other symplectic form on
  $\cotan B/P_{\am}$ making the topological reference bundle
  $\rzn \hookrightarrow \cotan B/P_{\am} \to B$ Lagrangian, then there
  exists a 2-form $\mu$ on $B$ such that
  $$ (\cotan B/P_{\am}, \omega') \qquad \text{and} \qquad (\cotan
  B/P_{\am}, \omega_0 + \pi^* \mu).$$ 
  \noindent
  are fibrewise symplectomorphic. In particular, $\mu = s^* \omega'$,
  where $s: B \to \cotan B/P_{\am}$ is any section. Therefore $\omega'
  - \pi^* s^* \omega'$ also defines the cohomology class $w_0$, and
  the construction is independent of the choice of section $s$.
\end{rk}

The cohomology class $w_0$ does not depend solely on the linear
holonomy $\mathfrak{l}$ of $\am$, as the next example illustrates
(cf. Example \ref{ex:symp_ref_bundles}).

\begin{ex}\label{ex:omega_0_different_examples}
  Let $\R/\Z$ and $\R/2\Z$ be the integral affine manifolds of Example
  \ref{ex:symp_ref_bundles}. These affine manifolds have trivial
  linear holonomy. Let $\omega_1$ and $\omega_2$ be
  symplectic forms on their respective symplectic reference
  bundles. These bundles are isomorphic as affine $\R/\Z$-bundles and
  their total spaces can be identified with $S^1 \times S^1$. The
  cohomology classes
  $$ w_{0,1}, w_{0,2} \in \Hh^1(S^1;\Hh^1(\R/\Z;\R)) \hookrightarrow
  \Hh^2(S^1 \times S^1; \R) $$
  \noindent
  defined from $\omega_1$ and $\omega_2$ as in Lemma
  \ref{lemma:coho_class_omega_0} satisfy
  $$ w_{0,2} = 2 w_{0,1}. $$
\end{ex}

In particular, the above example hints at the fact that $w_0$ is
an \emph{integral affine invariant} of the manifold $\am$. This is
evident from the cocycle $\tau = \{\tau_{\beta \alpha}\}$ representing
$w_0$ in the proof of Lemma \ref{lemma:coho_class_omega_0}, since it
depends on the translational components of the changes of integral
affine coordinates of $\am$. 

\begin{rk}\label{rk:why_real_coefficients}
  It is important to notice that the differential 1-forms $\de
  \theta_{\alpha}^1,\ldots, \de \theta_{\alpha}^n$ represent, in fact,
  integral cohomology classes in 
  $$\Hh^1(\rzn;\R) \cong \Hh^1(\rzn;\Z) \otimes_{\Z} \R $$
  \noindent
  for all indices $\alpha$. This is because these
  forms are dual to the flows of the vector fields $\partial/\partial
  \theta^1_{\alpha}, \ldots , \partial/\partial
  \theta^n_{\alpha}$ from time 0 to time 1. These curves define a
  basis of the integral homology groups 
  $$ \Hh_1(\rzn;\Z) $$
  \noindent
  of the fibres. Thus the reason why real
  coefficients are used throughout is that the translational
  components of the changes of integral affine coordinates of $\am$
  are not necessarily integral (cf. Remark
  \ref{rk:affine_subbundle}). 
\end{rk}

\subsubsection*{Realisability theorem}\label{sec:real-theor}
Let $\E^{*,*}_{2,B}$ denote the $\E_2$-page of the Leray-Serre 
spectral sequence with integer coefficients of the almost Lagrangian
bundle
$$\alf$$
\noindent
fixed above. Theorem \ref{thm:diff_general} and naturality of the
Leray-Serre spectral sequence imply the following corollary.

\begin{cor}\label{cor:diff_alb}
  Let $x \in \E^{p,1}_{2,B}$. Its image under the differential
  $\detwo_B$ is given by
  $$ \detwo_B (x) = (-1)^{p+1}\theta_B(x) \cdot_B \psi_B(c), $$
  \noindent
  where $\theta_B,~\psi_B,~\cdot_B$ are the pull-backs of
  $\theta,~\psi,~\cdot$ defined for the universal
  Lagrangian bundle.
\end{cor}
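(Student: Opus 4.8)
The plan is to reduce the computation of $\detwo_B$ to that of the single differential $\hat{\de}^{(2)}_B(f^1_B)$, exactly as in the universal case, and then to evaluate the latter by pulling back the result of Theorem \ref{thm:diff_general} along the classifying map. By Theorem \ref{thm:universality} the bundle $\alf$ is isomorphic to the pull-back of the universal Lagrangian bundle along its classifying map $\chi: B \to \baff$, and by Definition \ref{defn:cc} its Chern class is $c = \chi^* c_U$. Since the defining hypothesis of an almost Lagrangian bundle is that $\chi_* = \mathfrak{l}^{-T}$, all the local coefficient systems entering the $\E_2$-page of its Leray-Serre spectral sequence are the pull-backs along $\chi$ of the corresponding systems for the universal Lagrangian bundle, so that $\theta_B,~\psi_B,~\cdot_B$ are, by definition, the pull-backs of $\theta,~\psi,~\cdot$.

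First, the Charlap-style machinery of Section \ref{sec:spectr-sequ-univ} applies verbatim to $\alf$, since it only uses that the bundle is an affine $\rzn$-bundle. In particular, the analogue of Proposition \ref{prop:charlap_2} yields, for every $x \in \E^{p,1}_{2,B}$,
\begin{equation*}
  \detwo_B(x) = (-1)^{p+1}\theta_B(x) \cdot_B \hat{\de}^{(2)}_B(f^1_B),
\end{equation*}
where $f^1_B \in \hat{\E}^{0,1}_{2,B}$ is the class arising from the identity map on $\Hh_1(\rzn;\Z)$. Thus the whole statement is equivalent to the single identity $\hat{\de}^{(2)}_B(f^1_B) = \psi_B(c)$.

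I would establish this identity by naturality. The element $f^1$ is defined by a construction on the cohomology of the model fibre $\rzn$ that is insensitive to the base --- it comes from $\mathrm{id}: \Hh_1(\rzn;\Z) \to \Hh_1(\rzn;\Z)$ --- so, just as $f^1_{\rzn} = \tau^* f^1$ in the proof of Theorem \ref{thm:diff_general}, one has $f^1_B = \chi^* f^1$. Functoriality of the Leray-Serre spectral sequence (cf. Section 6 in \cite{mccleary}) then gives a commutative square identical in form to equation (\ref{eq:93}), whence
\begin{equation*}
  \hat{\de}^{(2)}_B(f^1_B) = \hat{\de}^{(2)}_B(\chi^* f^1) = \chi^*\hat{\de}^{(2)}(f^1).
\end{equation*}
By equation (\ref{eq:92}) of Theorem \ref{thm:diff_general}, $\hat{\de}^{(2)}(f^1) = \psi(c_U)$, and the naturality of $\psi$ --- a commutative square analogous to equation (\ref{eq:96}), valid because $\psi$ is induced by the fibrewise identification $\Hh^0(\rzn;\Hh_1(\rzn;\Z)) \cong \Z^n$ --- yields $\chi^*\psi(c_U) = \psi_B(\chi^* c_U) = \psi_B(c)$. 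Combining these gives $\hat{\de}^{(2)}_B(f^1_B) = \psi_B(c)$, which is the required identity.

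The bulk of the work is thus bookkeeping: confirming that $f^1_B = \chi^* f^1$ and that the isomorphisms $\theta_B,~\psi_B$ and the auxiliary pairing $\cdot_B$ are genuinely compatible with $\chi^*$. The main obstacle I expect is not any new computation --- Theorem \ref{thm:diff_general} has already carried out the hard universal calculation --- but rather verifying carefully that every piece of the Charlap machinery descends functorially along $\chi$, which in turn rests on the almost Lagrangian hypothesis $\chi_* = \mathfrak{l}^{-T}$ guaranteeing that the coefficient systems match under pull-back.
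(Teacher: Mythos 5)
Your proposal is correct and is precisely the argument the paper intends: the paper gives no written proof beyond the remark that the corollary follows from Theorem \ref{thm:diff_general} and naturality of the Leray-Serre spectral sequence, and your reduction via the Charlap formula over $B$ to the single identity $\hat{\de}^{(2)}_B(f^1_B) = \psi_B(c)$, established by pulling back $f^1$ and $\psi(c_U)$ along $\chi$, fills in exactly that outline. You also correctly avoid the trap of trying to pull back the full formula for arbitrary $x$ (which would only cover classes in the image of $\chi^*$), instead applying naturality only to $f^1$, which is where it is actually needed.
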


The main idea of Theorem \ref{thm:realisability} is to study the differential
$$ \detwo_{B,\R} : \E^{1,1}_{2,B,\R} \to  \E^{3,0}_{2,B,\R} $$
\noindent
on the $\E_2$-page of the Leray-Serre spectral sequence with real
coefficients (hence the subscript $\R$) associated to the above almost
Lagrangian bundle. Corollary
\ref{cor:diff_alb} can be used to show that if $x \in \E^{p,1}_{2,B,\R}$,
then
$$ \detwo_{B,\R} (x) = (-1)^{p+1}\theta_{B,\R}(x) \cdot_{B,\R} \psi_{B,\R}(c^{\R}), $$
\noindent
where $c^{\R} \in \Hh^2(B;\R^n_{\chi_*})$ is the image of $c$ under
the homomorphism
\begin{equation}
  \label{eq:105}
  \Hh^2(B;\Z^n_{\mathfrak{l}^{-T}}) \to  \Hh^2(B;\R^n_{\mathfrak{l}^{-T}})
\end{equation}
\noindent
induced by the standard inclusion 
$$\Z^n \hookrightarrow \R^n \cong \Z^n \otimes_{\Z} \R,$$ 
\noindent
and $\theta_{B,\R},~\psi_{B,\R}$ are the appropriate
isomorphisms. Henceforth, the subscripts $B,~\R$ are omitted in order
to simplify notation. \\
 
First, note that
$$ w_0 \in \E^{1,1}_2, $$
\noindent
since, up to a choice of basepoints, $\mathfrak{l}$ equals the local
coefficient system 
$$ \rho_1 : \pi_1(B) \to \mathrm{Aut}(\Hh^1(\rzn;\R)) $$
\noindent
which defines
$$ \E^{1,1}_2 \cong \Hh^1(B;\Hh^1(\rzn;\R)_{\rho_1}). $$
\noindent
Secondly, using local action-angle coordinates, it is possible to give an
explicit cocycle representing the form $w_0$ in \v Cech-de Rham
cohomology. Let $\mathcal{U} = \{U_{\alpha}\}$ be a good open cover by
integral affine coordinate neighbourhoods of $\am$. Remark
\ref{rk:local_trivialisations_of_alb} shows that there exist local
trivialisations 
$$\Upsilon_{\alpha} : \pi^{-1}(U_{\alpha}) \to \cotan
U_{\alpha}/P_{\am}|_{U_{\alpha}}$$
\noindent
inducing action-angle coordinates
$(\mathbf{a}_{\alpha}, \boldsymbol{\theta}_{\alpha})$ on
$\pi^{-1}(U_{\alpha})$; the corresponding transition functions $\varphi_{\beta
  \alpha}$ are of the form
$$ \varphi_{\beta
  \alpha}(\mathbf{a}_{\alpha},\boldsymbol{\theta}_{\alpha}) =
(A_{\beta   \alpha} \mathbf{a}_{\alpha} + \mathbf{d}_{\beta \alpha}, A^{-T}_{\beta \alpha}
\boldsymbol{\theta}_{\alpha} + \mathbf{g}_{\beta
  \alpha}(\mathbf{a}_{\alpha})), $$
\noindent
where the first component corresponds to a change in integral affine
coordinates on $\am$. The family of 1-forms
$$ \bar{\tau}_{\beta \alpha} = \sum\limits_{i=1}^n d^i_{\beta \alpha} \de
\theta^i_{\beta} $$
\noindent
define an element in $\Hh^1(B;\Hh^1(\rzn;\R)_{\rho_1})$, since the
forms $\de \theta^1_{\beta},\ldots,\de \theta^n_{\beta}$ are closed
and the family of cohomology classes of $\bar{\tau}_{\beta \alpha}$ form a \v
Cech cocycle. This last statement holds since
$$ (\delta \bar{\tau})_{\alpha_1 \alpha_2 \alpha_3} = \de \Bigg(\sum\limits_{i=1}^n
d^i_{\alpha_2 \alpha_3} g^i_{\alpha_2 \alpha_1}\Bigg), $$
\noindent
where $\delta$ denotes \v Cech differential. Since the angle coordinates
$\boldsymbol{\theta}_{\alpha}$ are pulled back from angle coordinates on
the symplectic reference bundle via the local
trivialisations $\Upsilon_{\alpha}$, it follows that the cohomology
class that $\{\bar{\tau}_{\beta \alpha}\}$ defines equals the
cohomology class $w_0$, since for all indices $\alpha, \beta$
$$ \tau_{\beta \alpha} = \bar{\tau}_{\beta \alpha} $$
\noindent
(cf. Lemma \ref{lemma:coho_class_omega_0}). \\

With the above constructions in place, it is possible to prove the
main theorem of this paper.

\begin{thm}\label{thm:realisability}
  Let $(B, \mathcal{A})$ denote an integral affine manifold with
  linear holonomy $\mathfrak{l}$. An almost Lagrangian bundle $\alf$
  over $\am$ is a Lagrangian bundle if and only if 
  \begin{equation*}
    \detwo(w_0) = 0,
  \end{equation*}
  \noindent
  where $\detwo : \E^{1,1}_2 \to \E^{3,0}_2$ is the differential on
  the $\E^2$-page of the Leray-Serre spectral sequence for $\alf$ with
  real coefficients.
\end{thm}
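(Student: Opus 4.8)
The plan is to prove the equivalence directly, using the Leray-Serre spectral sequence of $\alf$ as the device that records the obstruction to assembling the locally defined action-angle symplectic forms into a single global one. By Remark \ref{rk:local_trivialisations_of_alb} and the computation preceding the statement, the trivialisations $\Upsilon_\alpha$ produce closed, non-degenerate forms $\Upsilon_\alpha^*\omega_{0,\alpha} = \sum_i \de a^i_\alpha \wedge \de\theta^i_\alpha$ on each $\pi^{-1}(U_\alpha)$, whose fibres are Lagrangian and whose common fibrewise class, represented by the cocycle $\bar\tau_{\beta\alpha} = \sum_i d^i_{\beta\alpha}\,\de\theta^i_\beta$, is exactly $w_0 \in \E^{1,1}_2$. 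The first step is to reduce the theorem to a patching problem: $\alf$ is Lagrangian if and only if these local forms can be corrected by pullbacks $\pi^*\mu_\alpha$ of base $2$-forms so as to agree on overlaps and define a global closed $2$-form. One direction of this reduction is immediate, since a global form that is locally $\sum_i \de a^i_\alpha\wedge\de\theta^i_\alpha + \pi^*\mu_\alpha$ is automatically non-degenerate and restricts to zero on the fibres; for the other direction one uses that any symplectic form making $\alf$ Lagrangian induces, via Theorem \ref{thm:ex_local_action_angle}, the fixed integral affine structure of $\am$, so that the associated graded of its class in $\E^{1,1}_2$ is the integral affine invariant $w_0$.

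The second step translates this patching problem into the spectral sequence. Filtering the \v Cech-de Rham bicomplex of the cover $\{\pi^{-1}(U_\alpha)\}$ by base degree yields the Leray-Serre spectral sequence, in which $\{\Upsilon_\alpha^*\omega_{0,\alpha}\}$ is a representative of bidegree $(1,1)$ with class $w_0$. Completing the \v Cech-de Rham zig-zag by lower-filtration corrections is possible if and only if $w_0$ is a permanent cocycle; since no differential maps into $\E^{1,1}$ and the only possibly nonzero differential leaving it is $\detwo : \E^{1,1}_2 \to \E^{3,0}_2$, this holds precisely when $\detwo(w_0) = 0$. In the direction $(\Leftarrow)$, a permanent cocycle of filtration degree one lifts to a global closed $2$-form with no purely fibrewise $(0,2)$-component, hence locally of the shape $\sum_i \de a^i_\alpha\wedge\de\theta^i_\alpha + \pi^*\mu_\alpha$; such a form is symplectic with Lagrangian fibres, the mixed block $\sum_i \de a^i\wedge\de\theta^i$ being non-degenerate irrespective of the correction. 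The direction $(\Rightarrow)$ is the reverse reading: a global symplectic form with Lagrangian fibres realises $w_0$ as a permanent cocycle and forces $\detwo(w_0) = 0$.

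It then remains to connect this criterion with the existing literature. Applying Corollary \ref{cor:diff_alb} to $x = w_0 \in \E^{1,1}_2$, with the sign $(-1)^{p+1} = +1$ for $p=1$, identifies $\detwo(w_0)$ with the cup product $\theta(w_0) \cdot \psi(c)$ of the symplectic class with the Chern class; this recasts the vanishing condition as the pairing appearing in the Main Result and, upon comparison with the connecting homomorphism of the sheaf sequence $0 \to \mathcal{P} \to \mathcal{Z}(\cotan B) \to \mathcal{Z}(\cotan B/P_{\am}) \to 0$, recovers the Dazord-Delzant homomorphism of Theorem \ref{thm:daz_delz}.

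I expect the principal obstacle to be the faithful matching, at the cochain level, of the \v Cech-de Rham obstruction to patching with the differential $\detwo(w_0)$. Concretely, one must verify that the nonzero angle-shifts $\mathbf{g}_{\beta\alpha}$ enter through the coboundary term $\de\big(\sum_i d^i_{\alpha_2\alpha_3}\,g^i_{\alpha_2\alpha_1}\big)$ recorded before the statement and contribute exactly the cup product with the Chern class of Corollary \ref{cor:diff_alb}, with no extraneous term. A subsidiary but essential point is that the lift of the permanent cocycle can be chosen with vanishing $(0,2)$-component, which is what secures the non-degeneracy of the resulting form.
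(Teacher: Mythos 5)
Your overall strategy coincides with the paper's: reduce the existence of a compatible symplectic form to the problem of patching the local action--angle forms $\omega_\alpha=\sum_i\de a^i_\alpha\wedge\de\theta^i_\alpha$ after correcting by pullbacks of base $2$-forms, and identify the patching obstruction with $\detwo(w_0)$ through the \v Cech--de Rham description of the Leray--Serre spectral sequence. Your forward direction (the class of $\omega$ lies in filtration degree one; its symbol in $\E^{1,1}_{\infty}$ is $w_0$ because the \v Cech differences of the local primitives $\sum_i a^i_\alpha\de\theta^i_\alpha$ have fibrewise cohomology class $\sum_i d^i_{\beta\alpha}\de\theta^i_\beta$, the extra term being pulled back from the base; hence $w_0$ survives and $\detwo(w_0)=0$) is sound and is a slightly more conceptual packaging of the paper's explicit cocycle manipulation.

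The step that does not go through as written is in the converse: ``a permanent cocycle of filtration degree one lifts to a global closed $2$-form with no purely fibrewise $(0,2)$-component, hence locally of the shape $\sum_i\de a^i_\alpha\wedge\de\theta^i_\alpha+\pi^*\mu_\alpha$.'' The absence of a $(0,2)$-component says nothing about the $(1,1)$-component; an arbitrary closed representative of the surviving class could have a degenerate $(1,1)$-part, and neither non-degeneracy nor the specific local shape is a cohomological property of the class. The correct route is the one you already set up in your first step and which the paper follows: the failure of the fixed local forms $\omega_\alpha$ to patch is measured by the \v Cech $1$-cocycle $\varphi^*_{\beta\alpha}\omega_\beta-\omega_\alpha=\sum_{i,j}A^{ij}_{\beta\alpha}\de a^j_\alpha\wedge\de g^i_{\beta\alpha}$, which takes values in closed $2$-forms pulled back from $B$ and hence defines a class in $\Hh^1(B;\mathcal{Z}^2(\cotan B))\cong\Hh^3(B;\R)$. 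The cochain computation you flag as the ``principal obstacle'' (in the paper's notation, $(\delta\xi)_{\alpha_1\alpha_2\alpha_3\alpha_4}=-(\delta\eta)_{\alpha_1\alpha_2\alpha_3\alpha_4}$, with $-\delta\eta$ a representative of $\detwo(w_0)$) shows that this class equals $-\detwo(w_0)$ up to the stated isomorphisms; its vanishing then yields closed base forms $k_\alpha$ with $(\delta k)_{\beta\alpha}$ equal to the obstruction, and the forms $\omega_\alpha-\pi^*k_\alpha$ glue to a $2$-form that is automatically closed, non-degenerate and has Lagrangian fibres. That identification is the substance of the proof and is not optional; your proposal correctly locates it but defers it, while the abstract ``lift of the permanent cocycle'' detour should be dropped in favour of this concrete argument.
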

\begin{proof}
  \v Cech-de Rham cohomology and the corresponding interpretation of
  the Leray-Serre spectral sequence with real coefficients
  are used throughout this proof. Firstly,
  suppose that the bundle is, in fact, Lagrangian. Let $\omega$ denote
  a symplectic form on $M$ making $\alf$ Lagrangian. Theorem 
  \ref{thm:ex_local_action_angle} and equation (\ref{eq:41})
  imply that there exists a good open cover $\mathcal{U}
  =\{U_{\alpha}\}$ of $B$ with local action-angle
  coordinates $(\mathbf{a}_{\alpha},\boldsymbol{\theta}_{\alpha})$ on each
  $\pi^{-1}(U_{\alpha})$ and transition functions
  \begin{equation*}
    \varphi_{\alpha_2
      \alpha_1}(\mathbf{a}_{\alpha_1},\boldsymbol{\theta}_{\alpha_1}) = (A_{\alpha_2 
      \alpha_1} \mathbf{a}_{\alpha_1} + \mathbf{d}_{\alpha_2
      \alpha_1}, A^{-T}_{\alpha_2 \alpha_1} 
    \boldsymbol{\theta}_{\alpha_1} + \mathbf{g}_{\alpha_2
      \alpha_1}(\mathbf{a}_{\alpha_1})),
  \end{equation*}
  \noindent
  with the functions $\mathbf{g}_{\alpha_2
    \alpha_1}$ constrained by
  \begin{equation}
    \label{eq:113}
    \varphi_{\alpha_2 \alpha_1}^*\omega_{\alpha_2} = \omega_{\alpha_1},
  \end{equation}
  \noindent
  where $\omega_{\alpha_i}$ denotes the restriction of $\omega$ to
  $\pi^{-1}(U_{\alpha_i})$ for $i=1,2$. On each intersection, equation
  (\ref{eq:113}) is equivalent to
  \begin{equation}
    \label{eq:67}
    \sum\limits_{i,j=1}^n A^{ij}_{\alpha_2 \alpha_1} \de a^j_{\alpha_1}
    \wedge \de g^i_{\alpha_2 \alpha_1} = 0;
  \end{equation}
  \noindent
  since each $U_{\alpha_1} \cap U_{\alpha_2}$ is simply-connected (in
  fact, contractible), 
  the forms $\de a^i_{\alpha_1}$ are exact and so the above equation
  implies that 
  \begin{equation*}
    \de \Bigg(\sum\limits_{i,j=1}^n A^{ij}_{\alpha_2 \alpha_1} a^j_{\alpha_1} \de
    g^i_{\alpha_2 \alpha_1}\Bigg) = 0.
  \end{equation*}
  \noindent
  As in the proof of Lemma \ref{lemma:coho_class_omega_0}, let 
  $$ \sigma_{\alpha} = \sum\limits_{i=1}^n a_{\alpha}^i \de
  \theta^i_{\alpha} $$
  \noindent
  be a locally defined potential for $\omega$. Since equation
  (\ref{eq:113}) holds, 
  $$ \kappa_{\alpha_2 \alpha_1} = \varphi^*_{\alpha_2 \alpha_1} \sigma_{\alpha_2} -
  \sigma_{\alpha_1} $$
  \noindent
  is a closed 1-form. The \v Cech cocycle $\{\kappa_{\alpha_2 \alpha_1}\}$
  represents the cohomology class of $\omega$ in $\Hh^2(M;\R)$. Take
  $$ \underbrace{\sum\limits_{i,j=1}^n A^{ij}_{\alpha_2 \alpha_1} a^j_{\alpha_1} \de
  g^i_{\alpha_2 \alpha_1}}_{:=\zeta_{\alpha_2 \alpha_1}}  +
\underbrace{\sum\limits_{i=1}^n d^i_{\alpha_2 \alpha_1} \de 
  \theta^i_{\alpha_2}}_{= \tau_{\alpha_2 \alpha_1}} $$
  as a representative of $\kappa_{\alpha_2 \alpha_1}$.
  \noindent
  Note that $\{\tau_{\alpha_2 \alpha_1}\}$ is a representative of the
  cohomology class $w_0 \in \E^{1,1}_2$; if $\delta$ denotes the \v Cech-boundary
  map, then
  $$ (\delta \tau)_{\alpha_1\alpha_2 \alpha_3} = \de
  \Bigg(\sum\limits_{i=1}^nd^i_{\alpha_2 \alpha_3} g^i_{\alpha_2 \alpha_1}\Bigg) := \de
  \eta_{\alpha_1 \alpha_2 \alpha_3}. $$
  \noindent
  The local potentials $\eta_{\alpha_1 \alpha_2 \alpha_3}$ are defined up to
  a choice of constants. The functions $\{-(\delta \eta)_{\alpha_1 \alpha_2
    \alpha_3 \alpha_4}\}$ are a \v Cech-de Rham cocycle whose
  corresponding cohomology class in $\E^{3,0}_2$ is
  $\detwo (w_0)$ since the cover $\mathcal{U}$ is good (cf. Section 9 in \cite{bott_tu}). \\
  
  The \v Cech boundary of $\{\zeta_{\alpha_2 \alpha_1}\}$ is equal to
  \begin{equation}
    \label{eq:157}
    (\delta \zeta)_{\alpha_1 \alpha_2 \alpha_3} = \de \Bigg(\sum\limits_{i=1}^n
    (d^i_{\alpha_1 \alpha_3} - d^i_{\alpha_1 \alpha_2}) g^i_{\alpha_1 \alpha_2}\Bigg):= \de
    \xi_{\alpha_1 \alpha_2 \alpha_3}.
  \end{equation}
  \noindent
  A simple calculation shows that for all indices $\alpha_1, \alpha_2, \alpha_3,
  \alpha_4$
  \begin{equation}
    \label{eq:115}
    (\delta \xi)_{\alpha_1 \alpha_2 \alpha_3 \alpha_4} = - (\delta
    \eta)_{\alpha_1 \alpha_2 \alpha_3 \alpha_4}.
  \end{equation}
  \noindent
  In particular, $\delta \xi$ is a
  \v Cech-de Rham cocycle representing the cohomology class $\detwo
  (w_0)$. In order to prove that this class vanishes, it suffices to show that
  $\xi_{\alpha_1 \alpha_2 \alpha_3}$ can be chosen so that 
  $$ (\delta \xi)_{\alpha_1 \alpha_2 \alpha_3 \alpha_4} = 0$$
  for all indices $\alpha_1, \alpha_2, \alpha_3, \alpha_4$. \\

  Since equation (\ref{eq:67}) holds and the cover $\mathcal{U}$ is
  good, the 1-forms $\zeta_{\alpha_2 \alpha_1}$ are closed and, hence,
  exact. Set
  $$ \zeta_{\alpha_2 \alpha_1} = \de \epsilon_{\alpha_2
    \alpha_1} $$
  \noindent
  for each pair of indices $\alpha_1, \alpha_2$. Then
  \begin{equation}
    \label{eq:156}
    (\delta \zeta)_{\alpha_1 \alpha_2 \alpha_3} = \de (\epsilon_{\alpha_2
      \alpha_3} - \epsilon_{\alpha_1 \alpha_3} + \epsilon_{\alpha_3 \alpha_2}) =
    \de (\delta \epsilon)_{\alpha_1 \alpha_2 \alpha_3}.
  \end{equation}
  \noindent
  Equations (\ref{eq:157}) and (\ref{eq:156}) imply that
  $$ \xi_{\alpha_1 \alpha_2 \alpha_3} = (\delta \epsilon)_{\alpha_1 \alpha_2
    \alpha_3} + C_{\alpha_1 \alpha_2 \alpha_3} $$
  \noindent
  for some constants $C_{\alpha_1 \alpha_2 \alpha_3}$. By substituting
  $\xi_{\alpha_1 \alpha_2 \alpha_3}$ with $\xi_{\alpha_1 \alpha_2 \alpha_3}-
  C_{\alpha_1 \alpha_2 \alpha_3}$, it may be assumed that
  $$  \xi_{\alpha_1 \alpha_2 \alpha_1} = (\delta \epsilon)_{\alpha_1 \alpha_2
    \alpha_3}, $$ 
  \noindent
  which, in turn, implies that for all indices $\alpha_1, \alpha_2, \alpha_3,
  \alpha_4$
  $$ (\delta \xi)_{\alpha_1 \alpha_2 \alpha_3 \alpha_4} = (\delta^2
  \epsilon)_{\alpha_1 \alpha_2 \alpha_3 \alpha_4} = 0. $$
  \noindent
  This proves that $\detwo (w_0) =
  0$. \\

  Conversely, suppose that $\detwo (w_0) = 0 $ for the almost
  Lagrangian bundle $\alf$. Let $\mathcal{U} = \{U_{\alpha}\}$ be the
  good cover of $B$ given by Remark
  \ref{rk:local_trivialisations_of_alb}, \textit{i.e.} there exist
  local action-angle coordinates $(\mathbf{a}_{\alpha},
  \boldsymbol{\theta}_{\alpha})$ on $\pi^{-1}(U_{\alpha})$ and the
  transition functions are of the form 
  $$ \varphi_{\alpha_2 
    \alpha_1}(\mathbf{a}_{\alpha_1},\boldsymbol{\theta}_{\alpha_1}) = (A_{\alpha_2 
    \alpha_1} \mathbf{a}_{\alpha_1} + \mathbf{d}_{\alpha_2 \alpha_1}, A^{-T}_{\alpha_2 \alpha_1}
  \boldsymbol{\theta}_{\alpha_1} + \mathbf{g}_{\alpha_2
    \alpha_1}(\mathbf{a}_{\alpha_1})), $$
  without the constraint on the functions $\mathbf{g}_{\alpha_2
    \alpha_1}$ given by equation (\ref{eq:113}). Recall that the form 
  $$ \omega_{\alpha} = \sum\limits_{i=1}^n \de a^i_{\alpha} \wedge \de
  \theta^i_{\alpha} $$
  \noindent
  makes the bundle $\rzn \hookrightarrow \pi^{-1}(U_{\alpha}) \to
  U_{\alpha}$ Lagrangian. The obstruction to patching these forms
  together to yield a globally defined symplectic form $\omega$ on $M$
  which makes the bundle $\alf$ Lagrangian is given by the \v Cech cocycle
  \begin{equation}
    \label{eq:159}
    \varphi^*_{\alpha_2 \alpha_1} \omega_{\alpha_2} - \omega_{\alpha_1} =
    \sum\limits_{i,j=1}^n A^{ij}_{\alpha_2 \alpha_1}\de a^j_{\alpha_1} \wedge
    \de g^i_{\alpha_2 \alpha_1}.
  \end{equation}
  \noindent
  Since the cover $\mathcal{U}$ is good, this cocycle represents a
  cohomology class in $\Hh^1(B;\mathcal{Z}^2(\cotan B))$, where
  $\mathcal{Z}^2(\cotan B)$ denotes the sheaf of closed sections of
  the bundle $\bigwedge^2 \cotan B \to B$. In light of the
  isomorphism 
  \begin{equation}
    \label{eq:2}
    \Hh^1(B;\mathcal{Z}^2(\cotan B)) \cong \Hh^3(B;\R)
  \end{equation}
  \noindent
  (cf. Theorem 8.1 in \cite{bott_tu}, \cite{daz_delz}), the above cocycle defines a
  cohomology class 
  $$ \upsilon \in \Hh^3(B;\R).$$
  \noindent
  Using the notation of the first half of the proof, a cocycle
  representing $\upsilon$ in \v Cech cohomology is given by $-\delta
  \xi$ (this simply unravels the isomorphism of equation (\ref{eq:2})). 
  The equality of equation
  (\ref{eq:115}) still holds, since it is not necessary to have that
  the transition functions $\varphi_{\alpha_2 \alpha_1}$ are
  symplectomorphisms in order to prove it. 
  Thus
  $$ - \delta \xi = \delta \eta. $$
  \noindent
  Note that the \v Cech-de Rham
  cocycle $-\delta \eta$ is a representative of $\detwo (w_0)$; by
  assumption, this vanishes and
  $$ \upsilon = 0.$$
  \noindent
  Therefore
  $$ \sum\limits_{i,j=1}^n A^{ij}_{\alpha_2 \alpha_1}\de a^j_{\alpha_1} \wedge
  \de g^i_{\alpha_2 \alpha_1} $$
  is a coboundary; thus there exists closed 2-forms $k_{\alpha}$
  defined on each $U_{\alpha}$ such that
  \begin{equation}
    \label{eq:158}
    (\delta k)_{\alpha_2 \alpha_1} = \sum\limits_{i,j=1}^n
    A^{ij}_{\alpha_2 \alpha_1}\de a^j_{\alpha_1} \wedge 
    \de g^i_{\alpha_2 \alpha_1}
  \end{equation}
  \noindent
  for all indices $\alpha_1, \alpha_2$. \\

  The forms 
  $$ \omega_{\alpha} - \pi^* k_{\alpha} $$
  \noindent
  defined on each $\pi^{-1}(U_{\alpha})$ patch together by virtue of
  equations (\ref{eq:159}) and (\ref{eq:158}). Denote the resulting 2-form on $M$ by
  $\omega$. It is closed and non-degenerate since each
  $\omega_{\alpha} - \pi^* k_{\alpha}$ is; moreover, the fibres of the
  bundle are Lagrangian submanifolds of $\sm$, since they are
  Lagrangian submanifolds of the relevant symplectic manifold
  $(\pi^{-1}(U_{\alpha}), \omega_{\alpha} - \pi^* k_{\alpha})$ and,
  hence, the result follows.
\end{proof}

\begin{rk}\label{rk:proof_realisability}
\mbox{}
\begin{enumerate}[label=\roman*)]
\item   Let $\alf$ be an almost Lagrangian bundle over
  the integral affine manifold $\am$ with Chern class $c$, and let
  $w_0$ be the cohomology
  class of Lemma \ref{lemma:coho_class_omega_0}. If 
  $$ \detwo : \E^{1,1}_2 \to \E^{3,0}_2$$
  \noindent
  denotes the differential on the $E^2$-page of the Leray-Serre
  spectral sequence of $\alf$ with real coefficients, Theorem
  \ref{thm:realisability} proves that
  $$ \detwo (w_0) = -\mathcal{D}_{\am}(c),$$
  \noindent
  where $\mathcal{D}_{\am}$ denotes the homomorphism of Dazord and
  Delzant (cf. Theorem \ref{thm:daz_delz}). This is because the
  cohomology class $\mathcal{D}_{\am} (c)$ is given by the \v
  Cech-de Rham cocycle 
  $$ \{\sum\limits_{i,j=1}^n A^{ij}_{\beta \alpha} \de a^j_{\alpha}
  \wedge \de g^i_{\beta \alpha}\}, $$
  \noindent
  whose cohomology class can also be represented by the \v Cech cocycle
  $- \delta \xi$ defined above. The relation of equation
  (\ref{eq:115}) proves the claim; 
\item Theorem \ref{thm:realisability} may be called a
  \emph{realisability} theorem, since it provides a way to determine
  which cohomology classes in
  $\mathrm{H}^2(B;\Z^n_{\mathfrak{l}^{-T}})$ can be realised as Chern
  classes of Lagrangian bundles over $\am$. The terminology comes from
  the theory of symplectic realisations of Poisson manifolds (cf. Chapter 8 in \cite{vaisman}).
\end{enumerate}

\end{rk}


\section{Relation to Integral Affine
  Geometry} \label{sec:relat-integr-affine}
\subsection{The radiance obstruction of an affine
  manifold} \label{sec:radi-obstr-an}
In this section the radiance obstruction of an affine manifold is
introduced, following the work of Goldman and Hirsch in
\cite{goldman}. 

\subsubsection*{Universal radiance obstruction}\label{sec:univ-radi-obstr}
Consider the group
$$ \aff (\R^n) = \mathrm{GL}(n;\R) \ltimes \R^n. $$
\noindent
and the associated standard exact sequence
\begin{equation}
  \label{eq:121}
  \xymatrix@1{0 \ar[r] & \R^n \ar[r]^-{\iota} & \aff(\R^n)
    \ar[r]^-{\mathrm{Lin}} & \mathrm{GL}(n;\R) \ar[r] & 1.}
\end{equation}
\noindent
Define
\begin{equation}
  \label{eq:117}
  \begin{split}
    \mathrm{Trans}:& \aff(\R^n) \to \R^n \\
    & (A,\mathbf{b}) \mapsto \mathbf{b},
  \end{split}
\end{equation}
\noindent
which satisfies
\begin{equation*}
  \mathrm{Trans}((A,\mathbf{b}) \cdot (A',\mathbf{b}')) =
  \mathrm{Trans}(A,\mathbf{b}) +
  \text{Lin}(A,\mathbf{b})\mathrm{Trans}(A',\mathbf{b}'),  
\end{equation*}
\noindent
for all $(A,\mathbf{b}), (A',\mathbf{b}') \in \aff (\R^n)$. Thus
$\mathrm{Trans}$ is a \emph{crossed homomorphism} and it defines an element 
$$ r_U \in \Hh^1(\aff(\R^n);\R^n_{\mathrm{Lin}}), $$
\noindent
where $\R^n_{\mathrm{Lin}}$ denotes $\R^n$ as an $\aff(\R^n)$-module
via the homomorphism $\mathrm{Lin}$.

\begin{defn}[Universal radiance obstruction
  \cite{goldman,goldman_orbits}]\label{defn:univ_radiance_obs} 
  The cohomology class $r_U$ is called the \emph{universal radiance obstruction}.
\end{defn}

Let $\am$ be an affine manifold with affine holonomy representation
$$\mathfrak{a} : \pib \to \aff(\R^n)$$
\noindent
(cf. Definition \ref{defn:aff_lin_holo}). Let $\mathfrak{l} = \Lin
\circ \mathfrak{a}$ 
denote its linear holonomy and fix $\Gamma = \pib$ throughout.

\begin{defn}[Group theoretic definition of radiance obstruction
  \cite{goldman}]\label{defn:group_theoretic_radiance}
  The cohomology class
  $$ r_{\am} = \mathfrak{a}^* r_U \in
  \Hh^1(\Gamma;\R^n_{\mathfrak{l}}) $$
  \noindent
  is called the \emph{radiance obstruction} of the affine manifold
  $\am$. An affine manifold $\am$ whose radiance obstruction $r_{\am}$
  vanishes is called a \emph{radiant} manifold.
\end{defn}

\begin{rk}[Characterisation of radiant manifolds
  \cite{goldman}]\label{rk:char_radiant_mflds} 
  An affine manifold $\am$ is radiant if and only if its affine
  holonomy representation $\mathfrak{a}$ can be chosen so that its
  image lies entirely in $\glnr \subset \aff(\R^n)$. Equivalently, an
  affine manifold $\am$ is radiant if and only if there exists an affine
  structure $\mathcal{A}'$ which is affinely diffeomorphic to the
  given one and whose changes of coordinates lie in $\gl(n;\R)$.
\end{rk}

\subsubsection*{The topology of the universal radiance
  obstruction}\label{sec:topol-univ-radi}
The universal radiance obstruction $r_U$ can also be described
topologically as follows. Let $\mathfrak{a} : \Gamma \to \aff (\R^n)$
be a representation of a 
discrete group $\Gamma$. The composition $\mathrm{Trans} \circ
\mathfrak{a}$ defines a crossed homomorphism which represents a
cohomology class 
$$ r_{\Gamma} \in \Hh^1(\Gamma;\R^n_{\Lin \circ \mathfrak{a}}).$$
\noindent
For any topological group $G$, let $G^{\delta}$ denote the group
endowed with the discrete topology. In light of equation
(\ref{eq:121}), there is a split short exact sequence
$$  \xymatrix@1{0 \ar[r] & (\R^n)^{\delta} \ar[r]^-{\iota} & \aff(\R^n)^{\delta}
  \ar@<1ex>[r]^-{\mathrm{Lin}} & \mathrm{GL}(n;\R)^{\delta}
  \ar@<1ex>[l]^-{\sigma} \ar[r] & 1.} $$
\noindent
Applying Theorem \ref{thm:subgroup} to the splitting $\sigma$, obtain
a bundle
$$ (\R^n)^{\delta} \hookrightarrow \B \mathrm{GL}(n;\R)^{\delta} \to
\B \aff(\R^n)^{\delta}, $$
\noindent
where
$$ \B \mathrm{GL}(n;\R)^{\delta} \simeq \E \aff(\R^n)^{\delta}
/\mathrm{GL}(n;\R)^{\delta}. $$
\noindent
Note that $\B \aff(\R^n)^{\delta}$ and $\B \mathrm{GL}(n;\R)^{\delta}$
are $\mathrm{K}(\aff(\R^n);1)$ and $\mathrm{K}(\mathrm{GL}(n;\R);1)$
respectively. Using the ideas of the proof of Theorem 
\ref{thm:universality}, the following lemma can be
proved.

\begin{lemma}\label{lemma:structure_bgl_baff}
  There exists a bundle isomorphism
  $$ \xymatrix{ \E \aff(\R^n)^{\delta}
    /\mathrm{GL}(n;\R)^{\delta} \ar[0,2]^-{\cong} \ar[1,1]& &  \E
    \aff(\R^n)^{\delta} \times_{\aff(\R^n)^{\delta}} (\R^n)^{\delta}
    \ar[1,-1] \\
  & \B \aff(\R^n)^{\delta}. &} $$
\end{lemma}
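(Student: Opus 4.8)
The plan is to deduce the isomorphism from the general behaviour of the fibre bundle attached to a split group extension, exactly along the lines of the proof of Theorem \ref{thm:universality}. Set $G = \aff(\R^n)^{\delta}$ and let $H = \sigma(\glnr^{\delta}) \leq G$ be the image of the splitting, so that $G = H \ltimes (\R^n)^{\delta}$. Because $\E G$ is weakly contractible and carries a free right $G$-action, its restricted right $H$-action is also free, whence $\E G$ is a model for $\E H$ and $\B H \simeq \E G / H$ (this is the identification $\B\glnr^{\delta} \simeq \E\aff(\R^n)^{\delta}/\glnr^{\delta}$ recorded above). Thus the bundle on the left-hand side is the one produced by Theorem \ref{thm:subgroup} for $\sigma : H \hookrightarrow G$: it has fibre $G/H$, total space $\E G/H$ and base $\B G = \E G/G$.

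First I would record the purely formal identification, valid for any subgroup $H \leq G$, of $\E G/H$ with the Borel construction $\E G \times_G (G/H)$, where $G/H$ carries its tautological left $G$-action. Consider the maps
$$ \E G \times_G (G/H) \longrightarrow \E G/H,\ [e, gH] \mapsto egH, \qquad \E G/H \longrightarrow \E G \times_G (G/H),\ eH \mapsto [e, H]. $$
A short check using the relations $[e\cdot g', f] = [e, g'\cdot f]$ and $ehH = eH$ for $h \in H$ shows both maps are well defined and mutually inverse; the first covers the identity of $\B G$ and is a fibrewise homeomorphism, so it is an isomorphism of bundles over $\B G$.

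It then remains to identify the fibre $G/H$ with $(\R^n)^{\delta}$ \emph{as a left $G$-space}, and this is where the semidirect-product structure is used. Writing $g = (A,\mathbf{b})$ with $A \in \glnr^{\delta}$ and $\mathbf{b} \in (\R^n)^{\delta}$, the coset $(A,\mathbf{b})H$ depends only on $\mathbf{b} = \mathrm{Trans}(A,\mathbf{b})$, so $\mathrm{Trans}$ descends to a homeomorphism $G/H \xrightarrow{\;\cong\;} (\R^n)^{\delta}$; computing $(C,\mathbf{d})\cdot(A,\mathbf{b})H = (CA, \mathbf{d}+C\mathbf{b})H$ shows that the induced action is $\mathbf{b}\mapsto C\mathbf{b}+\mathbf{d}$, that is, the tautological affine action of $\aff(\R^n)$ on $\R^n$. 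Feeding this $G$-equivariant identification into the previous step yields
$$ \E G/H \;\cong\; \E G \times_G (G/H) \;\cong\; \E G \times_G (\R^n)^{\delta}, $$
which is the asserted isomorphism. The main point requiring care — and the step I expect to be the genuine obstacle rather than a formality — is keeping track of left versus right $G$-actions so that the two maps above are correctly inverse, together with the observation that the fibre action is the \emph{affine} one and not the linear action $\R^n_{\Lin}$: an $(I,\mathbf{b})$-equivariance would force any $G$-map $\R^n_{\mathrm{aff}} \to \R^n_{\Lin}$ to be constant, so the associated bundle on the right-hand side is genuinely affine, which is exactly what makes its section-theoretic obstruction the radiance obstruction.
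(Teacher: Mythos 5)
Your argument is correct and is exactly the "simple observation for general semidirect products" that the paper invokes (it omits the proof, referring back to the ideas behind Theorem \ref{thm:universality}): the formal identification $\E G/H \cong \E G \times_G (G/H)$ followed by the $G$-equivariant homeomorphism $G/H \cong (\R^n)^{\delta}$ via $\mathrm{Trans}$, which correctly produces the tautological \emph{affine} action on the fibre. Both steps check out, including the well-definedness of the two mutually inverse maps and the computation showing the induced fibre action is $\mathbf{b} \mapsto C\mathbf{b} + \mathbf{d}$.
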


Let $\Gamma$ be a discrete topological group. Any representation
$$ \mathfrak{a} : \Gamma \to \aff (\R^n) $$
\noindent
factors through $\aff (\R^n)^{\delta}$; the
homomorphism $\mathfrak{a}$ induces a map (defined up to homotopy)
$$ \bar{\mathfrak{a}} : \B \Gamma \to \B \aff (\R^n)^{\delta}$$
\noindent
whose induced map on fundamental groups coincides with $\mathfrak{a}$
(up to a choice of basepoints). A lift
$$ \xymatrix{& \B \mathrm{GL}(n;\R)^{\delta} \ar[d]^-{\sigma} \\
  \B \Gamma \ar@{.>}[-1,1] \ar[r]_-{\bar{\mathfrak{a}}} & \B \aff
  (\R^n)^{\delta}} $$
\noindent
exists if and only if
$$ \bar{\mathfrak{a}}_* (\Gamma) \subset
\sigma_* (\pi_1(\B\mathrm{GL}(n;\R)^{\delta})), $$
\noindent
since the fibre of the projection $\sigma :\B
\mathrm{GL}(n;\R)^{\delta} \to \B \aff(\R^n)^{\delta}$ is
discrete. Note that the induced map
$$ \sigma_* : \pi_1(\B \mathrm{GL}(n;\R)^{\delta}) \cong
\mathrm{GL}(n;\R) \to \pi_1(\B \aff (\R^n)^{\delta}) \cong
\aff(\R^n) $$
\noindent
equals the standard splitting
\begin{equation*}
  \begin{split}
    \sigma: \gl(n;\R) &\to \aff (\R^n) \\
    A & \mapsto (A,\mathbf{0}).
  \end{split}
\end{equation*}
\noindent
by construction. Hence, the map $\bar{\mathfrak{a}}$ admits a lift if and
only if the representation
$$ \bar{\mathfrak{a}}_* : \pi_1 (\B \Gamma) \cong \Gamma \to \pi_1(\B \aff
(\R^n)^{\delta}) \cong \aff(\R^n) $$
\noindent
lies entirely within the image of $\sigma_*$. The latter statement is
true if, up to conjugation, the image of the representation $\mathfrak{a}$ lies
entirely in $\sigma(\glnr)$, which is true if and only if $r_{\Gamma} =
0$. In particular, letting $\Gamma = \aff (\R^n)^{\delta} $ and
$\mathfrak{a} : \aff(\R^n)^{\delta} \to \aff(\R^n)$ be the identity
homomorphism, the above discussion proves the following theorem.

\begin{thm}\label{thm:obstr_existence_section}
  The universal radiance obstruction $r_U$ is the obstruction to the
  existence of a section for the fibration
  $$ (\R^n)^{\delta} \hookrightarrow \B \glnr^{\delta} \to \B
    \aff(\R^n)^{\delta}. $$
\end{thm}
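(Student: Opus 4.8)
The plan is to reduce the existence of a section to a fixed-point problem for the affine action of $\aff(\R^n)$ on $\R^n$, and then to recognise the resulting obstruction as the class of the crossed homomorphism $\mathrm{Trans}$. First I would invoke Lemma \ref{lemma:structure_bgl_baff} to replace the fibration by the Borel construction $\E\aff(\R^n)^{\delta} \times_{\aff(\R^n)^{\delta}} (\R^n)^{\delta}$, whose fibre carries the standard affine action. A section of this bundle is the same datum as an $\aff(\R^n)$-equivariant map $\E\aff(\R^n)^{\delta} \to (\R^n)^{\delta}$. Since the base $\B\aff(\R^n)^{\delta}$ and the total space $\B\glnr^{\delta}$ are aspherical---being $\mathrm{K}(\aff(\R^n),1)$ and $\mathrm{K}(\glnr,1)$ respectively---and the fibre is discrete, homotopy classes of sections are detected entirely on $\pi_1$. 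Thus a section exists if and only if the identity homomorphism of $\aff(\R^n)$ factors, up to conjugacy, through the inclusion $\sigma:\glnr \hookrightarrow \aff(\R^n)$, exactly as in the lifting criterion established in the discussion preceding the statement.

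Next I would identify this factorisation condition with the vanishing of $r_U$. The subgroup $\sigma(\glnr)$ is the stabiliser of the origin for the affine action, so landing in a conjugate of $\sigma(\glnr)$ means that the action fixes some point $\mathbf{x}_0 \in \R^n$. Writing a general element as $(A,\mathbf{b})$, the fixed-point condition $A\mathbf{x}_0 + \mathbf{b} = \mathbf{x}_0$ reads $\mathbf{b} = (\mathrm{id}-A)\mathbf{x}_0$, which is precisely the statement that the cocycle $\mathrm{Trans}(A,\mathbf{b}) = \mathbf{b}$ is a coboundary in $\Hh^1(\aff(\R^n);\R^n_{\Lin})$. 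Hence a section exists if and only if $r_U = [\mathrm{Trans}] = 0$, and when it does, conjugating by the translation $\iota(\mathbf{x}_0)$ produces the required lift.

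To upgrade this from an existence criterion to the assertion that $r_U$ \emph{is} the obstruction, I would run the primary-obstruction argument directly. Because the fibre is discrete, the only obstruction to extending a section over the $1$-skeleton is a cellular cocycle in $\Hh^1(\B\aff(\R^n)^{\delta};\R^n_{\Lin})$; matching the equivariant cells of $\E\aff(\R^n)^{\delta}$ against the bar resolution of $\aff(\R^n)$ shows that this cocycle sends $(A,\mathbf{b})$ to its translational part $\mathbf{b}$, that is, it is exactly $\mathrm{Trans}$. I expect this last identification to be the main obstacle: verifying that the obstruction cocycle equals $\mathrm{Trans}$ on the nose---rather than merely sharing its cohomology class, which is all the bare existence statement needs---requires carefully aligning the cellular, obstruction-theoretic description with the group-cohomological definition of $r_U$ from Definition \ref{defn:univ_radiance_obs}.
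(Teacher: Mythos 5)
Your argument is correct and follows essentially the same route as the paper: since the fibre is discrete and both spaces involved are aspherical, the existence of a section reduces to conjugating the identity representation of $\aff(\R^n)$ into $\sigma(\glnr)$, which via the fixed-point computation $\mathbf{b} = (\mathrm{id}-A)\mathbf{x}_0$ is exactly the condition that $\mathrm{Trans}$ be a coboundary, \textit{i.e.} $r_U = 0$. The extra cellular identification you propose in your final paragraph is not carried out in the paper, which is content with the existence criterion (run for an arbitrary discrete group $\Gamma$ and representation $\mathfrak{a}$, then specialised to $\Gamma = \aff(\R^n)^{\delta}$ and the identity), so you need not match the obstruction cocycle to $\mathrm{Trans}$ on the nose.
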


\subsubsection*{A geometric interpretation}\label{sec:geom-interpr}
It is possible to give a geometric interpretation to the radiance
obstruction of an affine manifold, which, thus far, is simply a
cohomological invariant of its fundamental group. Firstly, note that
the tangent bundle of an affine manifold can be 
endowed with the structure of a \emph{flat} affine bundle.

\begin{defn}[Flat affine bundle \cite{goldman}]\label{defn:flat_affine_bundle}
  Let $(F,\mathcal{B}) \hookrightarrow E \to N$ be an affine bundle,
  so that $(F,\mathcal{B})$ is an affine manifold and its structure
  group is $\aff (F,\mathcal{B})$. It is said to be \emph{flat} if it
  admits locally constant transition functions. 
\end{defn}

\begin{ex}[Tangent bundles of affine manifolds]\label{ex:flat_affine_bundles}
  The tangent bundle of an affine manifold $\aam$
  (thought of as a vector bundle) is naturally a flat linear bundle, since the
  standard transition functions are locally constant. There is,
  however, a different choice of flat affine bundle 
  structure that can be chosen on the tangent bundle of an affine
  manifold $\am$, defined in \cite{goldman}, which is more
  \emph{natural} from the point of view of affinely flat geometry. The
  transition functions for the local affine trivialisations of this
  other flat affine bundle structure are simply given by the affine
  changes of coordinates on the base space (cf. \cite{goldman}). This
  flat affine bundle is called the \emph{affine} tangent bundle of the
  affine manifold $\am$ and is denoted by $\T^{\aff}B \to B$.
\end{ex}

Fix an $n$-dimensional manifold $\am$ and let $\T^{\aff} B \to B$ be
its affine tangent bundle. Since its fibres are contractible, there
exists a section to this bundle, \textit{e.g.} the zero
section. However, it is not necessarily true that the affine tangent
bundle admits a \emph{flat} section.

\begin{defn}[Flat section of a flat affine bundle
  \cite{goldman}]\label{defn:flat_section} 
  Let $(F,\mathcal{B}) \hookrightarrow E \to N$ be a flat affine
  bundle. A section $s: N \to E$ is \emph{flat} if, for any local
  trivialisation $\pi^{-1}(U_{\alpha}) \to U_{\alpha} \times F$, the
  composite
  $$ \xymatrix@1{U_{\alpha} \ar[r]^-{s_{\alpha}} &
    \pi^{-1}(U_{\alpha}) \ar[r] & U_{\alpha} \times F
    \ar[r]^-{\mathrm{pr}_2} & F} $$
  \noindent
  is locally constant.
\end{defn}

Fix an affine manifold $\am$ and let $\T^{\aff} B \to B$ denote the
corresponding affine tangent bundle with transition functions
$f_{\beta \alpha}$. Since the transition functions of $\T^{\aff}B \to
B$ are constant on connected components, it is possible to construct
the bundle
\begin{equation}
  \label{eq:127}
  (\R^n)^{\delta} \hookrightarrow (\T^{\aff} B)^{\delta} \to B,
\end{equation}
\noindent
obtained by endowing the fibres with the discrete topology. Denote its
classifying map by
$$ \chi_{\aff} : B \to \B \aff (\R^n)^{\delta}. $$
\noindent
In light of Lemma \ref{lemma:structure_bgl_baff}, the bundle
of equation (\ref{eq:127}) is isomorphic to the pull-back
$$ \chi^*_{\aff} \B \glnr^{\delta} \to B. $$
\noindent
Therefore the cohomology class
$$ \chi^*_{\aff} r_U $$
\noindent
is the obstruction to the existence of a section for the bundle of
equation (\ref{eq:127}). \\

The homotopy class of $\chi_{\aff}$ is determined by its
induced map on fundamental groups, since $\B \aff (\R^n)^{\delta}$ is
a $\mathrm{K}(\aff(\R^n);1)$. By construction, this map equals (up to
conjugation) the
affine holonomy 
$$\mathfrak{a} : \pi_1(B) = \Gamma \to \aff (\R^n) $$
\noindent
of the affine manifold $\am$ (cf. \cite{aus}). Let
$$ \bar{\mathfrak{a}} : \B \Gamma \to  \aff (\R^n)^{\delta} $$
\noindent
be the map (defined up to homotopy) induced by the affine holonomy and
let $\chi_{\tilde{B}} : B \to \B \Gamma$ denote the classifying map for
the universal covering $\tilde{B} \to B$. Then (up to homotopy)
$$ \chi_{\aff} = \bar{\mathfrak{a}} \circ \chi_{\tilde{B}}; $$
\noindent
in particular, 
$$ \chi_{\aff}^* r_U= \chi_{\tilde{B}}^* \circ \bar{\mathfrak{a}}^*
r_U = \chi_{\tilde{B}}^* r_{\am}. $$
\noindent
The map $\chi_{\tilde{B}}^*$ is an isomorphism
on one dimensional cohomology with any coefficient system
(cf. \cite{goldman}) and thus
\begin{equation}
  \label{eq:128}
  r_{\am} = (\chi_{\tilde{B}}^*)^{-1} \circ \chi_{\aff}^* r_U.
\end{equation}
\noindent
\begin{rk}\label{rk:notation}
  By abuse of nomenclature and notation, the class $\chi_{\aff}^*r_U$ is
  henceforth also referred to as the radiance obstruction of $\am$ and
  denoted by $r_{\am}$. 
\end{rk}

With this identification, the radiance obstruction $r_{\am}$ is the
obstruction to the existence of a section to the bundle of equation
(\ref{eq:127}). By construction, a section for the aforementioned
bundle exists if and only if $\T^{\aff} B \to B$ admits a flat
section. Therefore the following theorem holds.

\begin{thm}[Goldman and Hirsch
  \cite{goldman}]\label{thm:obstruction_existence_flat_section}
  Let $\am$ be an $n$-dimensional affine manifold with linear holonomy
  $\mathfrak{l}$. The radiance obstruction 
  $$ r_{\am} \in \Hh^1(B;\R^n_{\mathfrak{l}}) $$
  \noindent
  is the obstruction to the existence of a flat section for the flat
  affine bundle $\T^{\aff} B \to B$.
\end{thm}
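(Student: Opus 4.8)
The plan is to deduce the statement from the universal case of Theorem \ref{thm:obstr_existence_section} together with the identification, already prepared in the discussion preceding the theorem, of the radiance obstruction $r_{\am}$ with the obstruction to a section of the discretised affine tangent bundle $(\R^n)^{\delta} \hookrightarrow (\T^{\aff}B)^{\delta} \to B$ of equation (\ref{eq:127}). Concretely, I would split the argument into two parts: first, that $r_{\am}$ is \emph{the} obstruction to a continuous section of the bundle (\ref{eq:127}); and second, that continuous sections of (\ref{eq:127}) are in natural bijection with flat sections of $\T^{\aff}B \to B$ in the sense of Definition \ref{defn:flat_section}.

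For the first part, I would recall from Lemma \ref{lemma:structure_bgl_baff} that (\ref{eq:127}) is isomorphic to the pull-back $\chi^*_{\aff}\B\glnr^{\delta} \to B$ of the universal bundle $(\R^n)^{\delta} \hookrightarrow \B\glnr^{\delta} \to \B\aff(\R^n)^{\delta}$ along the classifying map $\chi_{\aff}$. By Theorem \ref{thm:obstr_existence_section}, the universal radiance obstruction $r_U$ is the obstruction to a section of the latter; naturality of the obstruction class under pull-back then gives that $\chi^*_{\aff}r_U$ is the obstruction to a section of (\ref{eq:127}). Equation (\ref{eq:128}) and Remark \ref{rk:notation} identify $\chi^*_{\aff}r_U$ with $r_{\am}$, which lives in $\Hh^1(B;\R^n_{\mathfrak{l}})$ as claimed.

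For the second part, the key point is that the fibre $(\R^n)^{\delta}$ is discrete: over a connected trivialising neighbourhood a continuous section must be locally constant in the trivialisation, and this is precisely the defining property of a flat section (Definition \ref{defn:flat_section}); conversely a flat section is, by definition, continuous into the discretised total space. Hence the two notions of section coincide, and combining this with the first part yields the theorem.

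The step I expect to require the most care is verifying that $r_{\am}$ is a \emph{complete} obstruction, that is, that its vanishing actually produces a flat section rather than merely a section over the $1$-skeleton. The cleanest way to settle this is to exploit that (\ref{eq:127}), having discrete fibre, is a covering space, so that a global section exists if and only if the monodromy action of $\Gamma = \pib$ on $\R^n$ via the affine holonomy $\mathfrak{a}$ has a common fixed point. Representing $r_{\am}$ by the crossed homomorphism $\mathrm{Trans}\circ\mathfrak{a}$ (cf. Definition \ref{defn:group_theoretic_radiance}), one checks that it is a coboundary exactly when there exists $\mathbf{x}\in\R^n$ with $\mathrm{Trans}(\mathfrak{a}(\gamma)) = \mathbf{x} - \mathfrak{l}(\gamma)\mathbf{x}$ for all $\gamma$, i.e. when $\mathbf{x}$ is fixed by every $\mathfrak{a}(\gamma)$. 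Such a fixed vector patches the locally constant sections into a global flat section, confirming simultaneously that $r_{\am}$ is the only obstruction and that it is faithfully represented by the crossed homomorphism above.
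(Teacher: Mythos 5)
Your proposal is correct and follows essentially the same route as the paper: the paper likewise identifies the bundle of equation (\ref{eq:127}) with $\chi^*_{\aff}\B\glnr^{\delta} \to B$ via Lemma \ref{lemma:structure_bgl_baff}, pulls back the universal obstruction of Theorem \ref{thm:obstr_existence_section} along $\chi_{\aff}$, uses equation (\ref{eq:128}) to identify it with $r_{\am}$, and observes that sections of the discretised bundle are exactly flat sections of $\T^{\aff}B \to B$. Your final paragraph (the covering-space and fixed-point argument showing $r_{\am}$ is a complete obstruction) makes explicit a point the paper asserts without elaboration, and is consistent with Remark \ref{rk:char_radiant_mflds}.
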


\begin{ex}[Inequivalent flat affine structures on $\T (S^1 \times
  \R)$]\label{ex:affine_structures_ts1}
  \mbox{}
  \begin{enumerate}[label=\roman*)]
  \item   Consider the $\Z$-action on $\R^2$ given by translations in a fixed
    direction $\mathbf{b}_0 \neq 0$; this action is free, proper and by affine
    transformations 
    on $\R^2$. Thus the manifold $\R^2/\Z$ is affine and its affine
    holonomy 
    $$\mathfrak{a}_{\R^2/\Z}: \pi_1(\R^2/\Z) \to \aff (\R^2) $$
    \noindent
    is defined on a generator $\gamma$ as 
    \begin{equation*}
      \mathfrak{a}_{\R^2/\Z}(\gamma) = (I,\mathbf{b}_0),
    \end{equation*}
    \noindent
    and extended by linearity. The crossed homomorphism $\mathrm{Trans}
    \circ \mathfrak{a}_{\R^2/\Z}$ defines a non-zero cohomology
    class 
    $$r_{\R^2/\Z} \in \Hh^1(\R^2/\Z;\R^2). $$
    \noindent
    In light of Theorem
    \ref{thm:obstruction_existence_flat_section}, the flat affine bundle
    $\T^{\aff} (\R^2/\Z) \to \R^2/\Z$ does not admit a flat section; 
  \item The inclusion 
  $$ \R^2\setminus \{\mathbf{0}\} \hookrightarrow \R^2 $$
  \noindent
  induces an affine structure on $\R^2 \setminus \{\mathbf{0}\}$ which
  has trivial affine holonomy. Therefore the affine tangent bundle 
  $$ \T^{\aff} (\R^2 \setminus \{\mathbf{0}\}) \to \R^2\setminus
  \{\mathbf{0}\} $$
  \noindent
  admits a flat section.
  \end{enumerate}
\end{ex}

\subsection{Relation to Lagrangian
  bundles} \label{sec:relat-lagr-fibr}
In this section, the
radiance obstruction $r_{\am}$ is related to the problem of
constructing Lagrangian bundles over $B$ inducing the integral affine
structure $\mathcal{A}$. Throughout the following, fix an integral affine
manifold $\am$ whose linear holonomy is denoted by $\mathfrak{l}$. \\

Recall that $w_0$ is the cohomology class of the symplectic form
$\omega_0$ of the symplectic reference bundle 
associated to $\am$ 
$$ \rlf. $$
\noindent
Let $\mathcal{C}^{\infty}(\T^{\aff} B)$ and $\mathcal{C}^{\infty}(\cotan \rzn)$
denote the sheaves of sections of the affine tangent bundle $\T^{\aff} B \to B$ and
of 1-forms on the fibres of the symplectic reference
bundle respectively. The
symplectic form $\omega_0$ defines an isomorphism of sheaves
\begin{equation}
  \label{eq:132}
  \mathcal{C}^{\infty}(\cotan \rzn) \cong \mathcal{C}^{\infty}(\T^{\aff}
  B).
\end{equation}
\noindent
In local action-angle coordinates
$(\mathbf{a}_{\alpha},\boldsymbol{\theta}_{\alpha})$, the above
isomorphism is given by
\begin{equation}
  \label{eq:131}
  \sum\limits_{i=1}^n h^i_{\alpha} \de \theta^i_{\alpha} \mapsto
  \sum\limits_{i=1}^n h^i_{\alpha} \frac{\partial}{\partial a^i_{\alpha}}, 
\end{equation}
\noindent
where $h^1_{\alpha}, \ldots, h^n_{\alpha}$ are smooth
functions. By equation (\ref{eq:131}), the 
isomorphism of equation (\ref{eq:132}) restricted to the subsheaf
$\mathcal{C}^{\infty}_{\mathrm{flat}}(\cotan \rzn)$ of \emph{locally
  constant} sections descends to an isomorphism of sheaves
$$\mathcal{C}^{\infty}_{\mathrm{flat}}(\cotan \rzn) \cong
\mathcal{C}_{\mathrm{flat}}^{\infty}(\T^{\aff} B). $$
\noindent
Since $\cotan \rzn$ admits a frame of closed forms, a locally constant
section is given in local action-angle coordinates by
$$ \sum\limits_{i=1}^n r^i_{\alpha} \de \theta_{\alpha}^i, $$
\noindent
where $r^1_{\alpha}, \ldots, r^n_{\alpha} \in \R$ are constant. Such sections
are in 1-1 correspondence with cohomology classes in $\Hh^1(\rzn;\R)$,
since the forms $\de \theta^1_{\alpha}, \ldots, \de \theta^n_{\alpha}$
induce a basis of $\Hh^1(\rzn;\R)$. Hence, the symplectic form
$\omega_0$ induces an isomorphism of sheaves
\begin{equation}
  \label{eq:135}
  \mathcal{P}^* \cong \mathcal{C}_{\mathrm{flat}}^{\infty}(\T^{\aff} B),
\end{equation}
\noindent
where $\mathcal{P}^*$ denotes the sheaves of sections of the local
coefficient system
$$ \Hh^1(\rzn;\R) \hookrightarrow P^* \to B $$
\noindent
associated to the symplectic reference bundle. This
isomorphism induces an isomorphism of cohomology groups
\begin{equation}
  \label{eq:51}
  \Phi: \Hh^*(B;\mathcal{P}^*) \to
  \Hh^*(B;\mathcal{C}^{\infty}_{\mathrm{flat}}(\T^{\aff} B)). 
\end{equation}
\noindent
Since both $\mathcal{P}^*$ and
$\mathcal{C}^{\infty}_{\mathrm{flat}}(\T^{\aff} B)$ are locally constant
sheaves, the above induces an isomorphism
$$ \Phi : \Hh^*(B;\Hh^1(\rzn;\R)_{\mathfrak{l}}) \to
\Hh^1(B;\R^n_{\mathfrak{l}}). $$
\noindent

\begin{thm}\label{thm:equality_forms}
  The cohomology class $w_0 \in \Hh^1(B;\Hh^1(\rzn;\R)_{\mathfrak{l}})$
  defined by the symplectic form $\omega_0$ as in Lemma
  \ref{lemma:coho_class_omega_0}, maps to the radiance obstruction
  $r_{\am} \in \Hh^1(B;\R^n_{\mathfrak{l}})$.
\end{thm}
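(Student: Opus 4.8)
The plan is to compute explicit \v Cech cocycle representatives for both $\Phi(w_0)$ and $r_{\am}$ with respect to a common good cover by integral affine coordinate neighbourhoods, and to observe that they coincide. I would fix a good open cover $\mathcal{U} = \{U_{\alpha}\}$ of $B$ by integral affine coordinate neighbourhoods, with transition functions whose affine part is $\mathbf{a}_{\alpha} \mapsto A_{\beta\alpha}\mathbf{a}_{\alpha} + \mathbf{d}_{\beta\alpha}$, so that $\{A_{\beta\alpha}\}$ is a cocycle representing the linear holonomy $\mathfrak{l}$ and $\{\mathbf{d}_{\beta\alpha}\}$ records the translational parts. By Lemma \ref{lemma:coho_class_omega_0}, the class $w_0$ is represented by the \v Cech $1$-cocycle $\{\tau_{\beta\alpha}\}$ with $\tau_{\beta\alpha} = \sum_{i} d^i_{\beta\alpha}\,\de\theta^i_{\beta}$, valued in $\Hh^1(\rzn;\R)$ with the $\mathfrak{l}$-twisted coefficient system.

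Next I would apply $\Phi$ at the level of cocycles. By equation (\ref{eq:131}), the underlying sheaf isomorphism sends $\de\theta^i_{\alpha} \mapsto \partial/\partial a^i_{\alpha}$, and since the closed forms $\de\theta^1_{\alpha},\ldots,\de\theta^n_{\alpha}$ constitute the flat frame representing the standard basis of $\Hh^1(\rzn;\R)$, the restriction of this isomorphism to locally constant sections is precisely equation (\ref{eq:135}). Applying it to $\tau_{\beta\alpha}$ yields a representative of $\Phi(w_0)$ given by $\{\sum_{i} d^i_{\beta\alpha}\,\partial/\partial a^i_{\beta}\}$, that is, by the translational parts $\mathbf{d}_{\beta\alpha}$ expressed in the flat frame of the affine tangent bundle over $U_{\beta}$, with values in $\R^n$ twisted by $\mathfrak{l}$. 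One should check here that $\Phi$ respects the two coefficient systems, which holds because both $\mathcal{P}^*$ and $\mathcal{C}^{\infty}_{\mathrm{flat}}(\T^{\aff}B)$ carry the $\mathfrak{l}$-action and the isomorphism (\ref{eq:135}) is equivariant.

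It then remains to identify $\{\mathbf{d}_{\beta\alpha}\}$ with a representative of $r_{\am}$. By Remark \ref{rk:notation} and Theorem \ref{thm:obstruction_existence_flat_section}, $r_{\am}$ is the obstruction to the existence of a flat section of $\T^{\aff}B \to B$. Since, by Example \ref{ex:flat_affine_bundles}, the transition functions of $\T^{\aff}B$ are exactly the affine coordinate changes, a family of locally constant sections $\mathbf{s}_{\alpha}$ glues into a global flat section precisely when $\mathbf{s}_{\beta} = A_{\beta\alpha}\mathbf{s}_{\alpha} + \mathbf{d}_{\beta\alpha}$ on every overlap; starting from the zero sections, the primary obstruction is therefore the class of the $\mathfrak{l}$-twisted \v Cech $1$-cocycle $\{\mathbf{d}_{\beta\alpha}\}$. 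This is exactly the representative of $\Phi(w_0)$ computed above, whence $\Phi(w_0) = r_{\am}$, up to an overall sign absorbed by conventions.

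The main obstacle — and the step requiring genuine care — is reconciling the two a priori different descriptions of the radiance obstruction: the group-cohomology definition $r_{\am} = \mathfrak{a}^* r_U$ through the crossed homomorphism $\mathrm{Trans}$ (Definition \ref{defn:group_theoretic_radiance}), and the \v Cech obstruction cocycle $\{\mathbf{d}_{\beta\alpha}\}$ of the previous paragraph. The translation is already supplied by the identification $r_{\am} = (\chi_{\tilde{B}}^*)^{-1}\circ\chi_{\aff}^* r_U$ of equation (\ref{eq:128}) together with Theorem \ref{thm:obstruction_existence_flat_section}; what must be verified is that the obstruction cocycle produced by the equivariant obstruction-theory argument is indeed the translational-part cocycle, with the correct $\R^n_{\mathfrak{l}}$ twisting rather than some dual or inverse-transpose identification. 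Once this bookkeeping is in place, the theorem follows from the cocycle-level equality above.
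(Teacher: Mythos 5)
Your proposal is correct and follows essentially the same route as the paper: fix a good cover by integral affine charts, represent $w_0$ by the \v Cech cocycle $\{\sum_i d^i_{\beta\alpha}\,\de\theta^i_{\beta}\}$ from Lemma \ref{lemma:coho_class_omega_0}, push it through the isomorphism of equation (\ref{eq:131})/(\ref{eq:135}) to get $\{\sum_i d^i_{\beta\alpha}\,\partial/\partial a^i_{\beta}\}$, and identify this with the obstruction cocycle for a flat section of $\T^{\aff}B \to B$ via Theorem \ref{thm:obstruction_existence_flat_section}. The only cosmetic difference is that the paper represents that obstruction as the difference cocycle $\{s_{\beta}-s_{\alpha}\}$ of the explicit flat local sections $s_{\alpha}(x) = (x,-(D\phi_{\alpha}(x))^{-1}\phi_{\alpha}(x))$, whereas you extract the same translational-part cocycle $\{\mathbf{d}_{\beta\alpha}\}$ from the failure of the zero sections to glue.
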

\begin{proof}
  By Theorem \ref{thm:obstruction_existence_flat_section}, the
  radiance obstruction $r_{\am}$ is the obstruction to the existence
  of a flat section to $\T^{\aff} B \to B$. Let
  $\mathcal{U}=\{U_{\alpha}\}$ be a good open cover by integral affine
  coordinate neighbourhoods of $\am$ and let
  $$ \phi_{\alpha}: U_{\alpha} \to \R^n $$
  \noindent
  denote the coordinate map. The section 
  \begin{equation}
    \label{eq:133}
    \begin{split}
      s_{\alpha} : U_{\alpha} &\to \T^{\aff} U_{\alpha} \\
      x &\mapsto (x,-(D\phi_{\alpha}(x))^{-1}(\phi_{\alpha}(x)))
    \end{split}
  \end{equation}
  \noindent
  is flat (cf. \cite{goldman}). The
  collection
  $$ \hat{\tau} = \{\hat{\tau}_{\beta \alpha} \} := \{s_{\beta} -
  s_{\alpha} \} $$
  \noindent
  is a \v Cech cocycle which represents $r_{\am}$
  (cf. \cite{goldman}). \\

  Let $\mathbf{a}_{\alpha}$ denote affine coordinates on $U_{\alpha}$
  induced by $\phi_{\alpha}$ and, as usual, set 
  $$ \phi_{\beta} \circ \phi_{\alpha}^{-1} (\mathbf{a}_{\alpha})=
  A_{\beta \alpha}\mathbf{a}_{\alpha} + \mathbf{d}_{\beta \alpha} \in \affz (\R^n). $$
  \noindent
  The difference $s_{\beta} - s_{\alpha}$ is given by
  \begin{equation}
    \label{eq:134}
    \hat{\tau}_{\beta \alpha} = \sum\limits_{i=1}^n d^i_{\beta \alpha}
    \frac{\partial}{\partial a^i_{\beta}}. 
  \end{equation}
  \noindent
  In light of equation (\ref{eq:131}), the preimage of
  $\hat{\tau}_{\beta \alpha}$ under the isomorphism of 
  equation (\ref{eq:135}) is given by
  \begin{equation}
    \label{eq:136}
     \sum\limits_{i=1}^n d^i_{\beta \alpha} \de \theta^i_{\beta}.
  \end{equation}
  \noindent
  The cocycle of equation
  (\ref{eq:136}) corresponds to the cocycle $\tau$ defining the
  cohomology class $w_0$ in the proof of Lemma
  \ref{lemma:coho_class_omega_0} and, thus, the result follows.
\end{proof}

Theorem
\ref{thm:equality_forms} allows to use tools from affine geometry to study
problems in the symplectic geometry of Lagrangian bundles and
\textit{vice versa}. For instance, the following theorem holds.

\begin{thm}\label{thm:no_closed_integral_affine_mflds}
  There exist no closed radiant integral affine manifolds.
\end{thm}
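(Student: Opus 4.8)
The plan is to argue by contradiction: from a hypothetical closed radiant integral affine manifold I will construct a parallel volume density whose total mass is, on the one hand, preserved by a flow (by compactness) and, on the other hand, expanded at a constant exponential rate by that same flow, forcing the mass to vanish. So suppose $\am$ is a closed radiant integral affine manifold with linear holonomy $\mathfrak{l}$. By the characterisation of radiant manifolds (Remark \ref{rk:char_radiant_mflds}), the affine holonomy $\mathfrak{a}$ may be chosen so that its image lies entirely in $\glnr \subset \aff(\R^n)$; equivalently, the developing map $D:\tilde B \to \R^n$ is equivariant with respect to the purely linear representation $\mathfrak{l}:\pi_1(B) \to \glnr$. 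The Euler (radial) field $E = \sum_i y^i\,\partial/\partial y^i$ on $\R^n$ is invariant under every linear automorphism, so its pullback under the local diffeomorphism $D$ descends to a globally defined \emph{radiant vector field} $\rho$ on $B$. In any integral affine chart $\mathbf{a}_\alpha=(a^1_\alpha,\dots,a^n_\alpha)$ one has $\rho=\sum_i a^i_\alpha\,\partial/\partial a^i_\alpha$, and since the coordinate vector fields are parallel for the flat torsion-free connection $\nabla$ associated with $\mathcal{A}$, this yields
$$ \nabla \rho = \mathrm{id}_{\T B}. $$

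Next I would exploit integrality. Since $\mathcal{A}$ is an integral affine structure, $\mathfrak{l}$ takes values in $\glnz$, and every element of $\glnz$ has determinant $\pm 1$; hence the parallel volume \emph{density} $|\mu|$ determined locally by the affine coordinates has trivial holonomy and is globally defined on $B$, even when $B$ is non-orientable. Taking the trace of the identity above computes the divergence of $\rho$ with respect to this parallel density, and for a torsion-free connection preserving $|\mu|$ one has $\mathcal{L}_\rho|\mu| = (\mathrm{tr}\,\nabla\rho)\,|\mu|$, so that
$$ \mathrm{div}\,\rho = \mathrm{tr}(\nabla\rho) = n, \qquad \mathcal{L}_\rho|\mu| = n\,|\mu|. $$
Integrating $|\mu|$ over $B$ is legitimate because densities integrate over manifolds regardless of orientability, and $\int_B|\mu| = \mathrm{Vol}(B) > 0$.

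Finally I would derive the contradiction from compactness. Since $B$ is closed, $\rho$ is complete and its flow $\phi_t:B \to B$ is a diffeomorphism with $\phi_t(B)=B$ for every $t$. Diffeomorphism invariance of the integral of a density gives $\int_B \phi_t^*|\mu| = \int_B|\mu| = \mathrm{Vol}(B)$, while integrating $\mathcal{L}_\rho|\mu| = n\,|\mu|$ gives $\phi_t^*|\mu| = e^{nt}\,|\mu|$ and hence $\int_B \phi_t^*|\mu| = e^{nt}\,\mathrm{Vol}(B)$. Comparing the two expressions forces $e^{nt}\,\mathrm{Vol}(B) = \mathrm{Vol}(B)$ for all $t$, i.e.\ $\mathrm{Vol}(B)=0$, contradicting $\mathrm{Vol}(B)>0$; therefore no closed radiant integral affine manifold exists. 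The two points requiring care are the construction of $\rho$ with $\nabla\rho=\mathrm{id}$ directly from the linear-holonomy characterisation of radiance (the main conceptual step), and the systematic use of a parallel \emph{density} rather than a volume form, which is exactly what allows integrality to supply the needed invariant object without any orientability assumption on $B$.
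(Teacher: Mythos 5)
Your proof is correct, but it takes a genuinely different route from the paper's. You give the classical affine-differential-geometric argument: radiance yields the radiant vector field $\rho$ (the descent of the Euler field through the linear-equivariant developing map) satisfying $\nabla\rho=\mathrm{id}_{\T B}$; integrality yields a $\nabla$-parallel volume \emph{density} because the linear holonomy lies in $\glnz$ and so has determinant $\pm 1$; and on a closed manifold the flow of $\rho$ must preserve the total mass of that density while simultaneously scaling it by $e^{nt}$, which is absurd. This is in essence the Goldman--Hirsch/Smillie theorem that a closed radiant affine manifold carries no parallel volume, specialised to the integral affine case, and it is entirely intrinsic to $\am$ --- it uses none of the machinery of Sections 3--4. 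The paper instead argues through the symplectic reference bundle: the total space $\cotan B/P_{\am}$ is a closed symplectic manifold, so $[\omega_0]\neq 0$; but $[\omega_0]$ is governed by the class $w_0$ of Lemma \ref{lemma:coho_class_omega_0}, which Theorem \ref{thm:equality_forms} identifies with the radiance obstruction $r_{\am}=0$, forcing $[\omega_0]=0$. Your route is more elementary and actually proves the stronger statement that no closed \emph{radiant affine manifold with parallel volume} exists; the paper's route is less self-contained but deliberately showcases the symplectic--affine dictionary $w_0\leftrightarrow r_{\am}$ that the section is built around. One small imprecision to fix: in a general integral affine chart one has $\rho=\sum_i(a^i_\alpha-c^i_\alpha)\,\partial/\partial a^i_\alpha$ for some constants $c^i_\alpha$, not $\sum_i a^i_\alpha\,\partial/\partial a^i_\alpha$ (the centre sits at the origin only in charts adapted to the linear developing map); this does not affect the identity $\nabla\rho=\mathrm{id}_{\T B}$ or anything downstream.
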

\begin{proof}
  Suppose the contrary. Let $\am$ be a closed radiant integral affine manifold
  and let $P_{\am}$ denote the associated period lattice bundle
  (cf. Definition \ref{defn:integral_affine_periods}). Consider the
  symplectic reference bundle
  \begin{equation}
    \label{eq:143}
    \rzn \hookrightarrow (\cotan B/P_{\am}, \omega_0) \to B;
  \end{equation}
  \noindent
  since $B$ and $\rzn$ are closed, so is $\cotan B/P_{\am}$. Therefore
  the cohomology class
  $$ [\omega_0] \in \Hh^2(\cotan B/P_{\am};\R) $$
  \noindent
  is non-zero. Lemma \ref{lemma:coho_class_omega_0} proves that
  $[\omega_0]$ vanishes if and only if $w_0$ vanishes. However,
  Theorem \ref{thm:equality_forms} implies that
  $$ w_0 = \Phi^{-1}(r_{\am}) = 0, $$
  \noindent
  where the second equality follows by assumption. Therefore
  $[\omega_0]=0$, but this is a contradiction.
\end{proof}

\begin{rk}\label{rk:relation_to_literature}
  Theorem \ref{thm:no_closed_integral_affine_mflds} can be used, for instance, to prove that, for $n \geq 2$, $S^n$ is not an integral affine manifold (cf. Example 2.2 and Lemma 2.2 in \cite{sepe_topc}). This is because $\pi_1(S^n) =\{0\}$, which implies that, were $S^n$ to be an integral affine manifold, its radiance obstruction would vanish, which contradicts Theorem \ref{thm:no_closed_integral_affine_mflds}. In fact, the argument of Example 2.2 in \cite{sepe_topc} proves directly that if $S^3$ were an integral affine manifold then its radiance obstruction would vanish by noticing that the cohomology of the total space of the associated symplectic reference bundle (which would necessarily be diffeomorphic to $S^3 \times \R^3/\Z^3$) in degree 2 comes from the inclusion of the fibre $\R^3/\Z^3 \hookrightarrow S^3 \times \R^3/\Z^3$ (which induces an isomorphism in cohomology via the K\"unneth formula).
\end{rk}

\begin{rk} \label{rk:affine_subbundle}
  It is important to notice the difference between the bundle
  $P^* \to B$ and the period lattice bundle $P \to B$ associated to an
  integral affine manifold $\am$. The
  former is an \emph{affine} invariant of $B$ (via the symplectic form
  $\omega_0$), while the latter is only a \emph{linear} invariant,
  since it is the pull-back of a universal lattice defined over
  $\bgl$. The period lattice bundle can be endowed with the structure
  of an \emph{affine lattice} of $\am$ if and only if the radiance
  obstruction $r_{\am}$ is an \emph{integral} form, which, in turn,
  is true if and only if the coordinate changes of the atlas
  $\mathcal{A}$ can be chosen to lie in the group of affine
  transformations of $\Z^n$
  $$ \aff (\Z^n):= \glnz \ltimes \Z^n. $$
  \noindent
  Such manifolds are henceforth called \emph{strongly integral affine
    manifolds}, although it must be noticed that this terminology is
  not standard (cf. \cite{gross_mirror}). In view of Theorem
  \ref{thm:equality_forms}, the symplectic form $\omega_0$ on the symplectic
  reference bundle of a strongly integral affine
  manifold $\am$ is itself integral.
\end{rk}

Combining Theorems \ref{thm:realisability} and \ref{thm:equality_forms}, obtain the main result of the paper.

\begin{mr}\label{cor:10}
  Let $\am$ denote an integral affine manifold with linear holonomy
  $\mathfrak{l}$. An almost Lagrangian bundle $\alf$ over $\am$ is
  Lagrangian if and only if its Chern class $c \in
  \Hh^2(B;\Z^n_{\mathfrak{l}^{-T}})$ satisfies
  \begin{equation}
    \label{eq:148} 
    \theta_B(\Phi^{-1}(r_{\am})) \cdot \psi_B(c^{\R}) = 0,
  \end{equation}
  \noindent
  where the notation is the same as in Corollary \ref{cor:diff_alb}, and
  $r_{\am}$ is the radiance obstruction of the integral affine manifold $\am$.
\end{mr}

\begin{rk}\label{rk:relation_alo}
  Using the fact that the radiance obstruction $r_{\am}$ is the cohomology class defined by the identity map $\mathrm{Id}:\T B \to \T B$ (cf. \cite{goldman}), the above result implies Theorem 3 in \cite{sepe_lag}. This can be seen as follows. The left hand side of equation \eqref{eq:148} equals $\mathcal{D}_{\am}(c)$ by Theorem \ref{thm:realisability}; using \v Cech cocycles for $c$ and $r_{\am}$, it can be checked that the pairing $\theta_B(\Phi^{-1}(r_{\am})) \cdot \psi_B(c^{\R})$ yields a \v Cech cocycle whose corresponding cohomology class in $\Hh^3(B;\R)$ equals the twisted cup product of $c$ as defined in \cite{sepe_lag}, thus yielding the result.
\end{rk}

The Main Result proves that the homomorphism $\mathcal{D}_{\am} $ of
Dazord and Delzant \cite{daz_delz} is completely determined by the integral
affine structure on the base of an almost Lagrangian bundle and by the
universal Chern class $c_U$.
 
\begin{rk}\label{rk:31}
  If $\am$ is a strongly integral affine manifold with linear holonomy
  $\mathfrak{l}$, the Main Result can be strengthened to say that an
  almost Lagrangian bundle over $\am$ is Lagrangian if and only if
  its Chern class $c \in  \Hh^2(B;\Z^n_{\mathfrak{l}^{-T}})$ satisfies
  \begin{equation}
    \label{eq:149}
    \theta(\Phi^{-1}(r_{\am})) \cdot \psi(c) = 0.
  \end{equation}
  \noindent
  In particular, if $\rzn \hookrightarrow (M, \omega) \to B$ is a
  Lagrangian bundle over a strongly integral affine manifold $\am$
  (\textit{i.e.} it induces the affine structure $\mathcal{A}$ on
  $B$), then $\omega$ can always be chosen to be integral. This should
  be compared with Remark 1.2 of \cite{gross_mirror}. Note that it is not
  true that for a fixed integral affine manifold $\am$ there is a
  strongly integral affine manifold $(B',\mathcal{A}')$ in the same
  integral affine diffeomorphism class. This can be seen by
  considering an integral affine two-torus with trivial linear
  holonomy and translational components which are not integral
  (cf. \cite{misha}). 
\end{rk}

The following corollary is a special case of the Main Result.

\begin{cor}\label{cor:11}
  If $\am$ is a radiant affine manifold, then $\mathcal{D}_{\am} = 0$.
\end{cor}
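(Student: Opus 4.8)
The plan is to deduce this directly from the Main Result, treating it as a formal consequence rather than proving anything substantive from scratch. By Definition \ref{defn:group_theoretic_radiance}, saying that $\am$ is radiant means precisely that its radiance obstruction $r_{\am} \in \Hh^1(B;\R^n_{\mathfrak{l}})$ vanishes. So the entire content of the corollary is that $r_{\am}=0$ forces $\mathcal{D}_{\am}$ to be the zero homomorphism, and the Main Result already expresses $\mathcal{D}_{\am}$ entirely in terms of $r_{\am}$.

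Concretely, I would fix an arbitrary Chern class $c \in \Hh^2(B;\Z^n_{\mathfrak{l}^{-T}})$ and trace through the three-step mechanism assembled above. First, by Theorem \ref{thm:equality_forms} the isomorphism $\Phi$ sends $w_0$ to $r_{\am}$, so $w_0 = \Phi^{-1}(r_{\am}) = \Phi^{-1}(0) = 0$ as soon as $\am$ is radiant. Next, Corollary \ref{cor:diff_alb} (in its real-coefficient form) gives $\detwo(w_0) = (-1)^{p+1}\theta_B(w_0)\cdot_B \psi_B(c^{\R})$; since $\theta_B$ is a linear isomorphism, $w_0=0$ forces $\theta_B(w_0)=0$, and the $\Z$-bilinearity of the auxiliary pairing then makes the whole expression vanish, so $\detwo(w_0)=0$. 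Finally, Remark \ref{rk:proof_realisability} identifies $\detwo(w_0) = -\mathcal{D}_{\am}(c)$, whence $\mathcal{D}_{\am}(c)=0$. As $c$ was arbitrary, $\mathcal{D}_{\am}=0$. Equivalently, and more cleanly, one can simply invoke the displayed form of the Main Result, $\mathcal{D}_{\am}(c) = c \cdot r_{\am}$: pairing any class with the zero class $r_{\am}=0$ yields $0$ by bilinearity of the cohomology pairing $\Hh^2(B;\Z^n_{\chi_*}) \otimes_{\Z} \Hh^1(B;\R^n_{\mathfrak{l}}) \to \Hh^3(B;\R)$.

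I do not expect any genuine obstacle here, since all the analytic and spectral-sequence work has already been carried out in Theorems \ref{thm:realisability} and \ref{thm:equality_forms}. The only point requiring minor care is bookkeeping: making sure it is the vanishing of $w_0$ (rather than of $c$) that drives the pairing to zero, and that the isomorphisms $\Phi$ and $\theta_B$ legitimately transport the vanishing of $r_{\am}$ down to $\theta_B(w_0)$. No appeal to strong integrality, nor to integrality of $r_{\am}$, is needed, so the argument applies verbatim to the real-coefficient pairing used throughout Section \ref{sec:relat-lagr-fibr}.
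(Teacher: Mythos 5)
Your argument is correct and is exactly the paper's (implicit) proof: the paper states Corollary \ref{cor:11} as an immediate special case of the Main Result, which is precisely your observation that radiance means $r_{\am}=0$, hence $w_0=\Phi^{-1}(r_{\am})=0$ and $\mathcal{D}_{\am}(c)=c\cdot r_{\am}=0$ for every $c$ by bilinearity of the pairing. Your more detailed tracing through Theorem \ref{thm:equality_forms}, Corollary \ref{cor:diff_alb} and Remark \ref{rk:proof_realisability} is a correct unwinding of the same chain of results.
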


\begin{rk}\label{rk:25}
  Corollary \ref{cor:11} should be compared with what is known in the
  literature regarding exactness of the symplectic form on the total
  space of a Lagrangian (or isotropic) bundle, \textit{e.g.}
  \cite{dui}.
\end{rk}

\subsection{Some examples}\label{sec:examples}
In this section a
manifold is endowed with various radiant integral 
affine structures to illustrate how the classification of Lagrangian
bundles depends on the integral affine geometry of the base
space. \\

The manifold $B = \R^2 \setminus \{\mathbf{0}\}$ inherits an integral affine
structure from $\R^2$ via the natural inclusion
$$ B \hookrightarrow \R^2. $$
\noindent
Denote this integral affine structure by $\mathcal{A}_{0}$. Its
universal cover $\tilde{B}$ can also be endowed with an integral
affine structure $\tilde{\mathcal{A}}_0$; an explicit description of
the affine structure on 
$\tilde{B}$ can be found in \cite{bates_fasso}. It is important to
notice that this affine structure on $\tilde{B}$ is not affinely
isomorphic to the standard affine structure on $\R^2$. For any matrix $A \in
\mathrm{GL}(2;\Z)$, it is possible to define a $\Z$-action on
$(\tilde{B},\tilde{\mathcal{A}}_0)$ which induces an integral affine
structure on $B$ whose affine holonomy is given by the representation
defined on the generator $\gamma$ of $\pi_1(B)$ by
$$ \gamma \mapsto (A,\mathbf{0}). $$
\noindent
For $A_1,~ A_2,~ A_3 \in \mathrm{SL}(2;\Z)$, let
$\mathcal{A}_1,~\mathcal{A}_2,~\mathcal{A}_3$ be the corresponding
radiant integral affine structures on $B$. Consider the integral
affine manifold
\begin{equation}
  \label{eq:150}
  (Y, \mathcal{A}_{Y_3}) = (B, \mathcal{A}_1) \times (B, \mathcal{A}_2)
  \times (B, \mathcal{A}_3). 
\end{equation}
\noindent
This affine manifold is radiant, as it can be seen by looking at its
affine holonomy. Thus $\mathcal{D}_{(Y, \mathcal{A}_Y)} = 0$ by
Corollary \ref{cor:11}. Note that $Y$ has the homotopy type of a
three-torus and so it has $\Hh^3(Y;\R) \cong \R$. This integral affine
manifold therefore provides an example of trivial homomorphism
$\mathcal{D}_{(Y, \mathcal{A}_Y)}$ of
Dazord and Delzant even though its range is non-trivial. \\

Let
$\mathfrak{l}_Y$ denote the linear holonomy of $(Y, \mathcal{A}_Y)$. The
twisted cohomology group 
$$\Hh^2(Y;\Z^6_{\mathfrak{l}^{-T}_Y})$$
\noindent
is
not trivial if and only if at least one of the $A_i$ is
\emph{unipotent}. If this condition is satisfied, then $(Y, \mathcal{A}_Y)$
provides the first example of a manifold whose associated homomorphism
$\mathcal{D}_{(Y, \mathcal{A}_Y)}$ is trivial notwithstanding the fact
that both its domain and range are not trivial. More generally, by
taking the product of $k$ radiant integral affine 
manifolds of the form described above, it is possible to construct
such examples in any even dimension greater than or equal to 6. \\

Consider the product
$$(Z_{\mathbf{n}}, \mathcal{A}_{Z_{\mathbf{n}}}) =
(B,\mathcal{A}_{n_1}) \times \ldots (B,\mathcal{A}_{n_k}),$$
\noindent
where $\mathbf{n} = (n_1,\ldots,n_k) \in \Z_{+}^k$, each $n_j \neq 0$ and
the radiant integral affine structure $\mathcal{A}_{n_j}$ on $B$ has linear
holonomy generated by the matrix
\begin{equation}
  \label{eq:151}
  \begin{pmatrix}
    1 & 0 \\
    -n_j & 1
  \end{pmatrix}.
\end{equation}
\noindent
Let $\mathfrak{l}_{Z_{\mathbf{n}}}$ be the linear holonomy of $(Z_{\mathbf{n}},
\mathcal{A}_{Z_{\mathbf{n}}})$. All
elements of the cohomology group 
$$\Hh^2(Z_{\mathbf{n}};\Z^{2k}_{\mathfrak{l}^{-T}_{Z_{\mathbf{n}}}})$$
\noindent
(which, by the above remark, is non trivial) can be realised as the
Chern class of some regular Lagrangian 
bundle over $(Z_{\mathbf{n}}, \mathcal{A}_{Z_{\mathbf{n}}})$. This
example is interesting because each $(B,\mathcal{A}_{n_j})$ is the
affine model in the neighbourhood 
of a focus-focus singularity of a completely integrable Hamiltonian
system, which is homeomorphic to a two torus
pinched $n_j$ times, as shown in \cite{bates_fasso,zung_ff}. Thus
$(Z_{\mathbf{n}}, \mathcal{A}_{Z_{\mathbf{n}}})$ is a local affine
model for a product of focus-focus singularities. Such products occur
naturally amongst non-degenerate singularities of Lagrangian
fibrations, which have been classified topologically by Zung in
\cite{zung_symp1}. However, it is not known whether there exist
examples of non-degenerate singularities whose regular parts have
non-trivial Chern class and, in particular, whether the Lagrangian
bundles corresponding to non-zero cohomology classes in
$\Hh^2(Z_{\mathbf{n}};\Z^{2k}_{\mathfrak{l}^{-T}_{Z_{\mathbf{n}}}})$
can be compactified to admit (non-degenerate) singularities.


\section{Conclusion}\label{sec:conclusion}
This paper shows that the interplay between the symplectic geometry of
Lagrangian bundles and affine differential geometry runs deep,
allowing to use the methods of one subject to study the other and
\textit{vice versa}. The importance of the radiance obstruction in
constructing Lagrangian fibrations has been recognised by other
authors (\textit{e.g.} Gross and Siebert in \cite{gross_mirror}) who
use different techniques; in particular, the sheaf theoretic approach
developed in \cite{gross_mirror} yields an equivalent proof of the Main
Result. In light of the examples of Section \ref{sec:examples} and given the dearth of explicit examples of
singularities of completely integrable Hamiltonian systems, the following is a natural question to ask.

\begin{qn}
  Are the examples of Section \ref{sec:examples} the regular parts of
  some Lagrangian fibrations? If so, what can be said about the
  topology and symplectic geometry of the singularities?
\end{qn}

Various recent works (\textit{e.g.}
\cite{bolsinov,gross_mirror}) hint at the fact
that there is a strong correlation between the affine geometry of the
regular part of the base space of the fibration and the topology or
symplectic geometry in a neighbourhood of a singularity. This is a
subtle question, since many intervening factors, such as smoothness
conditions, also play a crucial role. By studying integral affine
manifolds with singularities (\textit{e.g.} \cite{gross_mirror}), it
may be possible to generalise the methods of this paper to either
construct such examples explicitly or prove non-existence results,
which would further clarify the nature of Lagrangian fibrations and of
completely integrable Hamiltonian systems.


\bibliographystyle{alpha}
\bibliography{mybib}
\end{document}